\title{The minimal model program for arithmetic surfaces enriched by a Brauer class}
\author{Daniel Chan}
\address{School of Mathematics and Statistics, 
UNSW Sydney, 
NSW 2052,	
Australia
}
\email{danielc@unsw.edu.au}
\author{Colin Ingalls}
\address{Carleton University, 
Canada}
\email{cingalls@math.carleton.ca}
\newcommand{\Br}{\textup{Br}}
\DeclareMathOperator*\medoplus{\mathchoice
  {\textstyle\bigoplus}
  {\textstyle\bigoplus}
  {\scriptstyle\bigoplus}
  {\scriptscriptstyle\bigoplus}
}
\begin{document}
\maketitle

\begin{abstract}
We examine the noncommutative minimal model program for orders on arithmetic surfaces, or equivalently, arithmetic surfaces enriched by a Brauer class $\beta$. When $\beta$ has prime index $p>5$, we show the classical theory extends with analogues of existence of terminal resolutions, Castelnuovo contraction and Zariski factorisation. We also classify $\beta$-terminal surfaces and Castelnuovo contractions, and discover new unexpected behaviour.
\end{abstract}

\section{Introduction}  \label{sec:intro}

Mori's minimal model program (MMP) is a major organising paradigm, initially introduced to study higher dimensional varieties over a field. Noncommutative versions of this program were initiated in \cite{CI05} for orders on surfaces over a field (henceforth dubbed geometric surfaces) and \cite{11authors} in higher dimensions. Much of classical Italian surface theory could be recovered, from resolution of singularities to the birational classification.

In this paper, we study the MMP for orders on arithmetic surfaces $X$. This noncommutative theory can also be viewed purely algebro-geometrically, as algebraic geometry enriched by a Brauer class $\beta \in \text{Br}\, K(X)$ and we restrict to this viewpoint here.
For us, (perhaps departing from convention), an {\em arithmetic surface} will mean a normal separated integral two-dimensional excellent scheme $X$ which is quasi-projective over a noetherian affine scheme and has finite residue fields. Reassuringly, we show (in the case of prime index Brauer class), the existence of terminal resolutions and analogues of Castelnuovo contraction and Zariski factorisation. We also classify terminal singularities and Castelnuovo contractions where surprising new phenomena show up. Whereas for both the commutative theory of arithmetic surfaces and for orders over geometric surfaces, these are fairly close to the classical Italian case, now, we find that if $\beta \neq 0$, regularity of $X$ is neither a sufficient nor necessary condition for being terminal, and Castelnuovo contractions may or may not correspond to blowing up closed points even when $X$ is regular. 

The basic idea of the noncommutative MMP can be described as follows. Assume, as we always will, that the order of the Brauer class $\beta$ is prime to all the residue characteristics of $X$. We can associate a log surface $(X, \Delta = \Delta_{X,\beta})$ to this data as follows. Let $C\subset X$ be an irreducible curve. There is a residue map we call the {\em ramification map} $a_C \colon \textup{Br}\, K(X) \to H^1_{et}(K(C), \bQ/\bZ)$. The  co-efficient of $C$ in $\Delta$ is given by the standard co-efficient $1-\frac{1}{e_C}$ where the {\em ramification index} $e_C$ is the order of $a_C(\beta)$. The motivation for this comes from noncommutative algebra. The Brauer class $\beta$ gives a central simple $K$-algebra $A$ (up to Morita equivalence), and to any maximal order in $A$, there is a natural analogue of the canonical line bundle which is related to $K_X$ by $\Delta_{X,\beta}$ in a manner reminiscent of the Riemann-Hurwitz formula. When $X$ is regular with trivial Brauer group e.g. in the Hensel local case, then $\beta$ is determined by its ramification data. It is now natural to define the canonical divisor $K_{X,\beta} = K_{X,\Delta}$. One important distinction between the study of the {\em Brauer log pair} $(X,\beta)$ and associated log geometry, is that the ramification datum $a_C(\beta)$ contains a lot more data than the standard co-efficient $1-\frac{1}{e_C}$. Indeed, $a_C(\beta)$ corresponds to a $\bZ/e_C$-cover $\Ctilde \to C$ (together with the Galois action), which can in turn ramify. We refer to this as {\em secondary ramification} of $\beta$. 

We obtain a natural notion of discrepancy for Brauer log pairs $(X,\beta)$ which we call the {\em b-discrepancy}, as follows. Suppose that $Y \to X$ is a proper birational map and that $E \subset Y$ is an exceptional curve. Then the b-discrepancy of $\beta$ along $E$ is the usual log discrepancy plus the co-efficient of $E$ in $\Delta_{Y,\beta}$. The whole framework of Mori's minimal model program now makes sense in this setting, replacing the usual discrepancy with b-discrepancy. In particular, we can talk about terminal Brauer log pairs. Our main goal is to recover classical surface theory in this setting, that is, classify terminal Brauer log pairs and establish the analogue of Castelnuovo contraction. 

Perhaps the most important distinction between the study of the Brauer log pair $(X,\beta)$ and the log geometry of $(X,\Delta_{X,\beta})$, is that $\beta$ determines ramification, and hence standard co-efficients on exceptional curves over $X$ in a highly non-trivial way. For the geometric surfaces studied in \cite{CI05}, the key to addressing this problem is the Artin-Mumford sequence \cite{AM} which describes the Brauer group of $K(X)$ in terms of ramification data. For rational resolutions, the sequence is exact and gives precise information which allows us to control how ramification data behaves under blowups. The classification of terminal Brauer log pairs $(X, \beta)$ is relatively straightforward, $X$ is smooth, the associated log boundary $\Delta_{X,\beta}$  has normal crossings, and at any node of $\Delta_{X,\beta}$, there is a further condition involving secondary ramification (see \cite[Definition~2.5]{CI05}). 

For arithmetic surfaces, we have a generalisation of the Artin-Mumford sequence due to Saltman \cite{Salt08} which similarly describes the part $\textup{Br}(K(X))'$ of the Brauer group $\textup{Br}(K(X))$ which is prime to all the residue characteristics. Unfortunately, even for rational resolutions, the sequence is only a complex which gives a non-trivial obstruction group. We are only able to compute this obstruction when there is no secondary ramification. Nevertheless, this is enough to prove a slew of interesting results, and in particular, recover classical Italian surface theory when $\beta$ has prime order $p>5$. Crucial to our theory is the development of a new ``fan calculus'' which, in this case, gives the ramification data of sufficiently many exceptional curves so as to pin down the terminal condition. 

The following is a watered down version of our classification theorem for terminal Brauer log pairs.

\begin{theorem}  \label{thm:introClassifyTerminal}
Let $(X,\beta)$ be a Brauer log pair where $\beta$ has prime index $p >5$. Then \'etale locally at any closed point $x \in X$ either
\begin{enumerate}
    \item $X$ is regular at $x$, the ramification locus of $\beta$ has at worst nodal singularities, and there is non-trivial secondary ramification if $x$ is a node, OR
    \item $X$ is a Hirzebruch-Jung  singularity $x$  with determinant $p$ (see Definition~\ref{def:detHJstring}), the ramification locus does not pass through $x$, but $\beta$ is non-trivial \'etale locally there.
\end{enumerate}
\end{theorem}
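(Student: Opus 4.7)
The plan is to work \'etale locally at a closed point $x \in X$ and argue case-by-case via the \emph{fan calculus}. In each case, the main tool is an explicit sequence of blow-ups on $X$ (or on its minimal resolution) combined with the fan calculus, which reads off the ramification index $e_E$ of $\beta$ on every exceptional curve $E$ over $x$. Terminality then becomes the concrete inequality
\[
b\text{-disc}(E) \;=\; a_{\log}(E) + \left(1 - \tfrac{1}{e_E}\right) > 0,
\]
where $e_E \in \{1,p\}$ since $\beta$ has prime index $p$, so each ramified exceptional contributes exactly $1 - 1/p$. The task reduces to determining which $(X,\beta)$ satisfy this strict positivity for every exceptional $E$ over $x$.

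First I would handle the regular case. Let $D = \operatorname{supp}(\Delta_{X,\beta})$. If $D$ has a singularity at $x$ worse than a node (multiplicity $\geq 3$, or two smooth branches with higher-order contact), a single blow-up at $x$ produces an exceptional curve $E$ ramified in $\beta$ by the Saltman complex, so $e_E = p$; comparing the log discrepancy of $E$ with $1 - 1/p$ and pushing through further blow-ups rules out b-terminality, with $p > 5$ securing the strict inequality. At a true node of $D$, a similar computation shows that the b-discrepancy of the blow-up of $x$ is strictly positive precisely when the secondary ramification of the two branches is non-trivial: the two $\mathbb Z/p$-covers then match up so as to kill ramification on $E$, forcing $e_E = 1$. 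Iterating confirms that no further blow-up destroys b-terminality, which yields conclusion (1).

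In the singular case I would first exclude the possibility that $D$ passes through $x$. On the minimal resolution $Y \to X$, whose exceptional fibre is a Hirzebruch-Jung string of curves with log discrepancies strictly below $1$, the fan calculus shows that any strict transform of a branch of $D$ through $x$ meets an HJ component $E$ and forces $e_E = p$; combined with the small log discrepancy this yields $b\text{-disc}(E) \leq 0$, a contradiction. Hence $D$ avoids $x$. If $\beta$ were moreover trivial \'etale locally at $x$, the problem would reduce to classical commutative MMP, where terminal means regular, contradicting singularity of $X$; so $\beta$ is \'etale-locally non-trivial. Running the fan calculus across every exceptional component of the HJ string and requiring $b\text{-disc}(E) > 0$ everywhere, together with the constraint that $\beta$ is $p$-torsion but non-split, carves out exactly the HJ strings of determinant $p$. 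Longer chains or different determinants produce a component which is simultaneously ramified and of too-small log discrepancy.

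The main obstacle is the step involving Saltman's sequence: unlike the geometric Artin-Mumford sequence used in \cite{CI05}, Saltman's version is only a complex, so pinning down the ramification of $\beta$ on exceptional curves over a singular $x$ requires a direct analysis of the obstruction group, which is tractable precisely under the no-secondary-ramification hypothesis automatic in the singular branch of the theorem (since $D$ avoids $x$ there). The bound $p > 5$ is used repeatedly to close borderline discrepancy inequalities; in particular it rules out pathological HJ chains of determinant $\neq p$ that could otherwise support a Brauer class compatible with b-terminality.
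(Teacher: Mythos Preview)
Your overall strategy---fan calculus plus explicit discrepancy checks---matches the paper's, but there are two genuine gaps.

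First, in the singular case you simply assert that the minimal resolution is a Hirzebruch--Jung string and proceed from there. This needs justification. The paper's route is to observe that $(X,\beta)$ terminal forces the associated log pair $(X,\Delta_{X,\beta})$ to be klt, and then to invoke the classification of klt surface singularities. For $p>5$ the standard coefficients $1-1/p$ are small enough that the dual graph of the minimal resolution must be Dynkin of type $A$, $B$, $C$, or $D$. The paper then devotes three separate lemmas to eliminating types $B$, $C$, and $D$ by adapting the fan calculus to those geometries (folding the string onto itself in type $B$, handling the fork and the degree-two edge in types $C$ and $D$). This is a substantial portion of the argument, and the hypothesis $p>5$ is used precisely here---to constrain the klt classification---rather than to ``close borderline discrepancy inequalities'' as you suggest. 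Within the type $A$ analysis itself and in the elimination of $B$, $C$, $D$, only $p>2$ is actually needed.

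Second, your mechanism at a node is backwards. You claim that when secondary ramification is non-trivial, ``the two $\mathbb{Z}/p$-covers match up so as to kill ramification on $E$, forcing $e_E=1$.'' In fact the opposite holds: with non-trivial secondary ramification, \emph{every} exceptional in the HJ-spectrum is ramified (vanishing of the primary obstruction forces this, since the secondary ramification along the strict transforms of the two branches must cancel against ramification on each exceptional they meet). Terminality then follows because the check $\bar{\delta}>1/p$ for ramified exceptionals is automatic on every lattice point in the open first quadrant. Conversely, when there is \emph{no} secondary ramification at a node with both branches $\beta$-ramified, the ramification along exceptionals is governed by a linear map $\bar{z}\colon\mathbb{Z}^2\to H^1(G,\mathbb{Z}/p)$, and one finds a primitive lattice point $(t,1)$ with $\bar{z}(t,1)=0$ (so $e_E=1$) but $\bar{\delta}(t,1)=(t+1)/p\le 1$, which violates terminality. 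So it is the \emph{absence} of secondary ramification that creates an unramified exceptional with bad discrepancy, not its presence.
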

The full Theorem~\ref{thm:terminalPrimeIndex}, gives precise conditions for the  terminal condition. It shows the converse is almost true, and in particular, for any Hirzebruch-Jung singularity $X$ with determinant $p>5$, there exist non-trivial Brauer classes $\beta$ such that $(X,\beta)$ is terminal. This is in stark contrast to the case of geometric surfaces where $(X,\beta)$ terminal guarantees $X$ is smooth. 

Our version of Castelnuovo contraction is formulated using the framework of MMP.
\begin{theorem}  \label{thm:introCastelnuovo} (see Theorem~\ref{thm:Castelnuovo}) Let $(Y,\beta)$ be a terminal Brauer log pair where $\beta$ has prime order $p>5$. Let $E \subset Y$ be an irreducible projective curve with $K_{Y,\beta}.E< 0, E^2<0$. Then there is a contraction map $f \colon Y \to X$ and $(X,\beta)$ is also terminal.  
\end{theorem}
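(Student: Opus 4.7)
The plan is to first use the classification theorem (Theorem~\ref{thm:introClassifyTerminal}) to enumerate the possible local structures of $Y$ along $E$. Combining the Brauer-enriched adjunction formula with the numerical constraints $K_{Y,\beta}.E < 0$, $E^2 < 0$ should restrict us to a short list: $E \cong \mathbb{P}^1$, meeting either the smooth ramification locus transversally at a controlled number of points, or passing through a small number of Hirzebruch-Jung points of determinant $p$ at which $\beta$ is non-trivial \'etale locally. The fan calculus developed earlier in the paper lets us read off the ramification of $\beta$ along $E$ in each case, which we will need at the terminality step.

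Next I would construct $f \colon Y \to X$ as a morphism of schemes. Since $E^2 < 0$ on a normal excellent surface and $E$ is a smooth rational curve by step one, standard contraction results (Artin's criterion, or direct construction) produce a proper birational morphism contracting precisely $E$ to a closed point $x$, with $X$ again a normal arithmetic surface. In general $x$ need not be regular; we expect the possibilities to be $x$ regular with $f$ a classical blowdown, a Hirzebruch-Jung point of determinant $p$, or a regular point at which the pushed-forward ramification locus acquires a node carrying nontrivial secondary ramification, in line with the surprising new phenomena flagged in the introduction.

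To verify terminality of $(X,\beta)$, the plan is to check the b-discrepancy along every prime divisor $F$ exceptional over $X$. Writing $K_{Y,\beta} = f^*K_{X,\beta} + aE$ and intersecting with $E$ gives $a = (K_{Y,\beta}.E)/E^2 > 0$ by hypothesis. A direct local computation using $a>0$, together with the configuration data from step one, gives the positive b-discrepancy required for $F = E$. For $F \neq E$ exceptional over $X$, realise $F$ on some model $g \colon Z \to Y$; pulling the relation $K_{Y,\beta} = f^*K_{X,\beta} + aE$ back under $g$ shows the b-discrepancy of $F$ over $X$ equals the b-discrepancy over $Y$ plus $a \cdot \textup{mult}_F(g^*E)$, which is non-negative, so terminality of $(Y,\beta)$ yields the result.

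The main obstacle will be tracking secondary ramification through the contraction. The $\bZ/p$-cover $\Ctilde \to E$ associated to $a_E(\beta)$ may ramify at points where $E$ meets the existing ramification or singular loci of $Y$, and one must verify that the pushed-forward Brauer datum near $x$ actually satisfies the terminal criterion in each configuration from step one. This requires Saltman's generalised Artin--Mumford sequence \cite{Salt08}, and since that sequence is only a complex in the arithmetic setting, the relevant obstruction must be shown to vanish case by case. The prime order hypothesis $p>5$ and the restricted list of geometries surviving step one should make this tractable via the fan calculus, but this is the technical heart of the argument.
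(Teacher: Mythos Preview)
Your third paragraph is essentially the paper's Lemma~\ref{lem:weakCastelnuovo} and is the correct core of the argument: once a contraction $f\colon Y\to X$ exists, writing $K_{Y,\beta}=f^*K_{X,\beta}+aE$ and intersecting with $E$ gives $a>0$, and then for any exceptional $F$ over $X$ realised on $g\colon Z\to Y$ one has $b_F(X)=b_F(Y)+a\cdot\textup{mult}_F(g^*E)\ge b_F(Y)>0$. That is already a complete proof that $(X,\beta)$ is terminal. Your fourth paragraph is therefore a red herring: there is \emph{no} need to track secondary ramification through the contraction, to compute $a_E(\beta)$ via the fan calculus, or to verify that the image point $x$ satisfies the classification criterion of Theorem~\ref{thm:introClassifyTerminal}. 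Terminality is a discrepancy statement, and the discrepancy inequality just established handles it uniformly. The Artin--Mumford--Saltman obstruction plays no role at this stage.

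Where your outline is genuinely thin is the existence of the contraction. Saying ``Artin's criterion, or direct construction'' once you know $E\cong\bP^1$ glosses over the fact that $Y$ is only normal and $E$ may pass through Hirzebruch--Jung points, so adjunction and Artin contractibility are not immediate. The paper organises this differently and more cleanly by splitting on whether $\beta$ ramifies along $E$. If $\beta$ is unramified on $E$, then $\Delta_{Y,\beta}.E\ge0$ forces $K_Y.E<0$, and one invokes the \emph{commutative} surface MMP to contract (Corollary~\ref{cor:contractUnramifiedCurve}). If $\beta$ ramifies on $E$, the classification Theorem~\ref{thm:terminalPrimeIndex} forces $Y$ to be regular along $E$; then an explicit adjunction computation (Proposition~\ref{prop:contractRamifiedCurve}) shows either the ramification cover $\Etilde\to E$ is ramified (and the geometric-surface argument of \cite{CI05} applies), or $E$ is a $(-p)$-curve on a regular surface. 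In both subcases contractibility is then standard. So the ``technical heart'' is not downstream of the contraction, as you suggest, but in establishing that the contraction exists at all; and the paper's ramified/unramified dichotomy is the efficient way to do this rather than a global enumeration of local configurations along $E$.
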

The Zariski factorisation theorem also follows easily from this (\ref{thm:ZariskiFactor}). 

In the prime index $p>5$ we also have a complete classification (see  Theorem~\ref{thm:classifyCastelnuovo} and Remark~\ref{rem:classifyCastelnuovo}) of the ``Castelnuovo contractions'' of Theorem~\ref{thm:introCastelnuovo}. We cannot give the full result here in the introduction, but the following corollary gives the gist of the bizarre new phenomena that can occur.

\begin{theorem}  \label{thm:introClassifyCastelnuovo}
Let $(X,\beta)$ be a terminal Brauer log pair with $\beta$ of prime index $p>5$. For any closed point $x\in X$, there is exists a unique (up to isomorphism) proper birational morphism $f \colon Y \to X$ which contracts a single irreducible curve and such that $(Y,\beta)$ is terminal. Moreover,
\begin{enumerate}
    \item if $x$ is a regular point, then $Y$ has at most one singular point, and
    \item if $x$ is a Hirzeburch-Jung singularity, then either $Y$ is regular and $f$ is the contraction of a $(-p)$-curve, or $Y$ has exactly two Hirzebruch-Jung singularities. 
\end{enumerate}
\end{theorem}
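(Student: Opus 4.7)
My plan is to combine the classification of terminal Brauer log pairs (Theorem~\ref{thm:introClassifyTerminal}) with Zariski factorisation (Theorem~\ref{thm:ZariskiFactor}), arguing by cases on the \'etale-local structure of $(X,\beta)$ at $x$ and using the fan calculus to track the ramification index and the secondary ramification of $\beta$ along each candidate exceptional curve.

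\textbf{Existence.} I would construct $f\colon Y\to X$ explicitly in each case. Suppose first that $x$ is regular, so that by Theorem~\ref{thm:introClassifyTerminal} the local ramification of $\beta$ at $x$ is either empty, a single smooth branch, or a node with non-trivial secondary ramification. In the first two sub-cases, the ordinary blowup $\text{Bl}_x X$ is the candidate; the fan calculus computes $a_E(\beta)$ on the $(-1)$-curve $E$ and one verifies the terminality criterion of Theorem~\ref{thm:introClassifyTerminal}. When $x$ is a node with secondary ramification, the ordinary blowup fails to be terminal and a weighted blowup is required; the minimal terminal resolution produces a chain of rational curves all but one of which must be contracted, leaving on $Y$ a single Hirzebruch-Jung singularity of determinant $p$, which is the content of~(1). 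When $x$ is itself a Hirzebruch-Jung singularity of determinant $p$ (Definition~\ref{def:detHJstring}), its minimal resolution is a chain of rational curves with determinant $p$; contracting the full chain gives the regular option with a single $(-p)$-curve, while contracting a proper sub-chain splits the determinant-$p$ string into two sub-strings, so $Y$ carries two residual Hirzebruch-Jung singularities meeting the single exceptional curve, matching~(2).

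\textbf{Uniqueness.} Given two terminal single-curve contractions $f_i\colon Y_i \to X$ ($i=1,2$), apply Theorem~\ref{thm:ZariskiFactor} to the birational map $Y_1 \dashrightarrow Y_2$ to obtain a terminal model $Z$ dominating both. Every excess $f_i$-exceptional curve on $Z$ is $K_{Z,\beta}$-negative relative to $Y_i$, so by Theorem~\ref{thm:introCastelnuovo} it can be contracted; since each $Y_i$ has only one $f_i$-exceptional curve, the two Castelnuovo sequences $Z\to Y_i$ must have the same length and collapse onto isomorphic models, so $Y_1 \simeq Y_2$ over $X$.

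The main obstacle is controlling the secondary ramification in the nodal sub-case, because the Saltman analogue of the Artin--Mumford sequence is only a complex with a non-trivial obstruction group. The fan calculus must therefore be pushed far enough to pin down $a_E(\beta)$ on every exceptional curve in the minimal resolution of the node, and to verify that the Hirzebruch-Jung singularity forced on $Y$ has determinant exactly $p$ rather than some larger divisor; the prime index hypothesis $p>5$ enters precisely here to exclude pathological secondary ramification patterns that would otherwise break the classification in~(1).
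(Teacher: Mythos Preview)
Your case analysis for regular $x$ is inverted, and this is a genuine error rather than a matter of exposition. When $\beta$ ramifies on a single smooth branch through $x$ (case~(2) of Theorem~\ref{thm:terminalPrimeIndex}), the ordinary blowup is \emph{not} terminal: the exceptional $E$ picks up the same \'etale ramification as the branch (Corollary~\ref{cor:ramexc}), so $E$ and the strict transform meet at a node with no secondary ramification, which is excluded by the classification. This is exactly Example~\ref{eg:nonTermBlowup}. The correct $Y$ here is obtained by blowing up $p$ times along the strict transform of the ramification curve and then contracting the resulting chain of $(p-1)$ $(-2)$-curves to a single Hirzebruch--Jung point of determinant $p$; the surviving $(-1)$-curve is unramified. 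Conversely, when $x$ is a node with non-trivial secondary ramification (case~(3)), the ordinary blowup \emph{is} terminal and is the unique Castelnuovo contraction: the exceptional meets both strict transforms at nodes where secondary ramification again cancels. So the ``weighted blowup'' and the residual singularity belong to the smooth-branch case, not the nodal one.

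Your uniqueness argument via a common terminal roof $Z$ is not the paper's method and, as written, does not close. Knowing that $Z\to Y_1$ and $Z\to Y_2$ are both composites of $(\beta,g)$-contractions of the same length does not by itself force $Y_1\simeq Y_2$ over $X$; different orders of contraction can a priori land on non-isomorphic intermediate models. The paper instead argues inside the minimal resolution: for any candidate $Y$, take $\tilde Y\to Y$ minimal and factor $\tilde Y\to\tilde X$ as a sequence of point blowups. Minimality forces a unique $(-1)$-curve in $\tilde Y$, so each blowup must occur on the previous exceptional; the terminality constraint (via Theorem~\ref{thm:terminalPrimeIndex} and the secondary obstruction) then pins down each centre uniquely. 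In the Hirzebruch--Jung case this forces all blowups to be at nodes of the exceptional string and identifies the sole surviving curve as the one with fan coordinate $(p,-k+1)$, which is how the two residual singularities of determinant $p$ arise --- not by contracting a sub-chain of the original resolution, as you suggest.
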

The theorem suggests that we should think of the contraction map $f$ there as the ``$\beta$-twisted blowup'' of $X$ at $x$, and surprisingly, at least to us, this is not necessarily the usual blowup of $X$ at $x$ but depends critically on $\beta$. 
\begin{example}  \label{eg:intro}

We give a simple example illustrating some of the new phenomena that arise. Let $q$ be a prime such that $q\equiv 1 \pmod{3}$ so the $q-$adic integers $\mathfrak{o} = \hat{\bZ}_q$ contains $\omega$, a primitive cube root of 1. The arithmetic surface of interest will be $X = \Spec \mathfrak{o}[x]$. Let $\alpha \in \mathfrak{o}^{\times}$ be a non-cube so $\mathfrak{o}':=\mathfrak{o}[\sqrt[3]{\alpha}]$ defines a degree 3 unramified extension of $\mathfrak{o}$. The central simple algebra $K(X)\langle u,v \rangle /(u^3 - \alpha, v^3-x, vu - \omega uv)$ defines an index 3 Brauer class $\beta = (\alpha, x)_{\omega} \in \text{Br}\,K(X)$ which has ramification $\mathfrak{o}'/\mathfrak{o}$ along the curve $C: x=0$ and is unramified elsewhere. Now $(X,\beta)$ is terminal. We can blowup $X$ at closed points four times to obtain a proper birational morphism $f \colon Y \to X$ whose exceptional locus $E_1\cup E_2\cup E_3 \cup E_4$ has dual intersection graph

\begin{equation}
\xymatrix{
\stackrel{(-3)}{E_1} 
\ar@{-}[r] & \stackrel{(-1)}{E_2}
\ar@{-}[r] & \stackrel{(-3)}{E_3}
\ar@{-}[r] & \stackrel{(-1)}{E_4}
}    
\end{equation}
and the strict transform $\Ctilde$ of $C$ intersects only $E_4$. One shows fairly easily with the results in Section~\ref{sec:2ndobstruction} that $\beta$ ramifies on $E_1, E_3, \Ctilde$ and is unramified elsewhere so in particular is terminal on $Y$. We may thus factorise $f \colon Y \to X$ as a composite of Castelnuovo contractions with respect to $\beta$. It turns out that contracting either of the $(-1)$ curves $E_2$ or $E_4$ will give a surface which is not $\beta$-terminal. Instead, one should contract the two $(-3)$-curves first. Next, one contracts $E_2$ to a canonical Hirzebruch-Jung singularity of type $A_2$ which we note has determinant 3. Finally, one contracts $E_4$ to arrive at $X$. This example, shows that changing the Brauer class from the trivial one to $\beta$ above, completely changes how you Zariski factorise $f$.

\end{example}

In the final section, we look at terminal Brauer log pairs on regular surfaces, without the prime index hypothesis on the Brauer class. With our current methods, we cannot classify the possibilities, but we do show that the ramification locus is close to normal crossing, though, to our surprise, it does not have to be. 

\begin{notation}
Throughout, we let $X$ be an integral scheme, usually an arithmetic surface. We will denote the rational function field of $X$ as $K(X)$.  Given any abelian group $H$, the notation $H'$ denotes the part of $H$ which is prime to all the residue characteristics of $X$. 
If there is more than one scheme $X$, involved, the set of their residue characteristics will be the same, so there should be no ambiguity as to what $H'$ means. 

Finally, the notion of intersection products for arithmetic surfaces depends on what ring you wish to compute lengths of cohomology groups over. We will try to spell this out every time there is a possibility of ambiguity rather than fixing a base ring like $\bZ$. The one exception is that given an irreducible projective curve $E$, the self-intersection number $E^2$ will always be computed with respect to $H^0(E,\cO_E)$ so Castelnuovo contractions will contract precisely the $(-1)$-curves. 
\end{notation} 

\textbf{Acknowledgements:} We would like to thank S\'andor Kov\'acs for help and references regarding the minimal model program for commutative arithmetic surfaces.

\section{The Artin-Mumford-Saltman sequence} 
\label{sec:ArtinMumford}

Let $X$ be an arithmetic surface as in the introduction. Suppose that $X$ is furthermore regular and let $K=K(X)$ denote its field of fractions. 

Recall Grothendieck's approach \cite{GBIII} to studying $Br(K)'$. Let $g \colon \Spec K \to X$ be the inclusion of the generic point and $i_C \colon \Spec K(C) \to X$ be the inclusion of the generic point of a curve $C \in X^{(1)}$. There is an exact sequence of \'etale sheaves on $X$: 

\begin{equation} \label{eq:CartierDivisors}
0 \to \bG_{m,X} \to g_*  \bG_{m,K} \to 
\bigoplus_{C \in X^{(1)}} i_{C*} \bZ \to 0.
\end{equation}

We obtain the following exact sequence in \'etale cohomology.

\begin{equation} \label{eq:BrauerGroupLES}
    0 \to \textup{Br}(X)' \to H^2_{et}(X, g_*\bG_{m.K})' \xto{a} 
    \bigoplus_{C \in X^{(1)}}H^2_{et}(X, i_{C*} \bZ)' \xto{\partial} H^3_{et}(X, \bGm)' \xto{\theta} H^3_{et}(X, g_*\bG_{m,K})' 
\end{equation}

The Leray spectral sequences for $g$ and $i_C$ then embed $H^2_{et}(X, g_*\bG_{m.K})'$ in $\textup{Br}(K)'$ and $H^2_{et}(X, i_{C*} \bZ)'$ into $H^2(K(C),\bZ)' \simeq H^1(K(C), \bQ/\bZ)'$. The morphism $a$ above lifts to the ramification map 
\begin{equation}  \label{eq:aramificationmap}
a = (a_C) \colon \textup{Br}(K)' \xto{a} \bigoplus_C H^1_{et}(K(C), \bQ/\bZ)'.
\end{equation}
Now the elements of $H^1(K(C), \bQ/\bZ)$ classify cyclic extensions of $K(C)$ with chosen generator of the Galois group, so there are {\em secondary ramification} maps $r\colon H^1(K(C), \bQ/\bZ) \to \medoplus_{q \in C} \mu^{-1}$. Given $\beta \in \textup{Br}\, K$ such that $a_C(\beta) \in H^1_{et}(K(C), \bQ/\bZ)$ is non-zero, we say that $C$ is a {\em ramification curve} of $\beta$, and the corresponding (possibly ramified) cyclic cover $\Ctilde \to C$, is the {\em ramification cover}. Its ramification $r(a_C(\beta))$ will be referred to as {\em secondary ramification} of $\beta$. This will be a key concept for us.

The Leray spectral sequence for $g$ also gives a map $H^3_{et}(X, g_*\bG_{m,K}) \to H^3_{et}(K, \bG_{m,K})$ with kernel the image of $H^0(X,R^2g_*\bG_{m,K})$ so composing with $\theta$ in Equation~(\ref{eq:BrauerGroupLES}) above gives a map 
$$H^3_{et}(X, \bGm)' \to H^3_{et}(K, \bG_{m,K})'.$$

We can now describe the Artin-Mumford-Saltman sequence below \cite{AM},\cite[Theorem~6.12]{Salt08}. 
\begin{theorem} \label{thm:ArtinMumford}
There is a complex 
$$ 0 \to \textup{Br}(X)' \to \textup{Br}(K)' \xto{a} \bigoplus_{C} H^1_{et}(K(C), \bQ/\bZ)' \xto{r} 
\bigoplus_{q \in X^{(2)}} \mu^{-1}.
$$
whose only cohomology is at the $H^1$ term. The cohomology there is given by the kernel of 
$H^3_{et}(X, \bGm)' \to H^3_{et}(K, \bG_{m,K})'$
\end{theorem}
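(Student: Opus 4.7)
The plan is to derive the claimed complex directly from the long exact sequence (\ref{eq:BrauerGroupLES}) by identifying each term and each map concretely, and then to read off the description of the cohomology at $H^1$ from the Leray spectral sequence for $g$. This is essentially the argument of Saltman in \cite[\S 6]{Salt08}, extending the original Artin--Mumford proof to mixed characteristic.

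First I would identify the terms. The Leray spectral sequence $H^p(X, R^q g_* \bG_{m,K}) \Rightarrow H^{p+q}(K, \bG_{m,K})$, together with the stalkwise vanishing $R^1 g_* \bG_{m,K} = 0$ (Hilbert 90), yields an injection $H^2_{et}(X, g_*\bG_{m,K})' \hookrightarrow \textup{Br}(K)'$ whose cokernel sits inside $H^0(X, R^2 g_*\bG_{m,K})'$. The five-term sequence of the same spectral sequence, combined with the map $\theta$ in (\ref{eq:BrauerGroupLES}), identifies this cokernel with $\ker\bigl(H^3_{et}(X, \bGm)' \to H^3_{et}(K, \bG_{m,K})'\bigr)$. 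For each curve $C \in X^{(1)}$, applying the Leray sequence for $i_C$ to $0 \to \bZ \to \bQ \to \bQ/\bZ \to 0$, together with the vanishing of $R^q i_{C*}\bQ$ on the prime-to-residue-characteristic part (since $\bQ$ has trivial torsion Galois cohomology), yields $H^2_{et}(X, i_{C*}\bZ)' \cong H^1_{et}(K(C), \bQ/\bZ)'$.

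Next I would identify the maps. The ramification map $a$ of (\ref{eq:aramificationmap}) agrees with the edge map of (\ref{eq:BrauerGroupLES}) by construction. For $\partial$, cohomological purity for the regular excellent surface $X$ gives $H^j_q(X, \bGm)' = 0$ for $j \neq 3$ and $H^3_q(X, \bGm)' = \mu^{-1}(k(q))$ at each closed point $q$; the coniveau (Cousin) spectral sequence then exhibits $\bigoplus_{q \in X^{(2)}} \mu^{-1}$ as a direct summand of $H^3_{et}(X, \bGm)'$, and the identification of $\partial$ with the secondary ramification map $r$ reduces to a local computation at the strict henselisation at $q$, where the abstract connecting homomorphism becomes the classical iterated residue symbol.

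Finally, these identifications transform (\ref{eq:BrauerGroupLES}) into the claimed complex, except that $\textup{Br}(K)'$ replaces the smaller $H^2(X, g_*\bG_{m,K})'$; injectivity at $\textup{Br}(X)'$ is immediate, and the complex condition $r \circ a = 0$ on all of $\textup{Br}(K)'$ follows by restricting any class to a dense open where it is defined and invoking (\ref{eq:BrauerGroupLES}) there. A diagram chase then identifies the cohomology at $\textup{Br}(K)'$ with the cokernel $\textup{Br}(K)' / H^2(X, g_*\bG_{m,K})'$, which by the first step is $\ker\bigl(H^3_{et}(X, \bGm)' \to H^3_{et}(K, \bG_{m,K})'\bigr)$. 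The main technical obstacle will be the local identification of $\partial$ with $r$: the spectral-sequence formalism is essentially routine, but matching the abstract connecting homomorphism with the explicit iterated residue demands a careful cocycle-level computation in the strict henselisation, and it is here that the mixed-characteristic case genuinely needs Saltman's refinement of Artin--Mumford and motivates the restriction to the prime-to-residue-characteristic part throughout.
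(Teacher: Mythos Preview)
The paper does not actually prove this theorem; it is quoted from \cite{AM} and \cite[Theorem~6.12]{Salt08}, with the preceding paragraphs only setting up the maps. So there is no in-paper argument to compare against, and the question is whether your sketch stands on its own.

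It does not, for two related reasons. First, you have misread ``the $H^1$ term'': it means the term $\bigoplus_C H^1_{et}(K(C),\bQ/\bZ)'$, not $\textup{Br}(K)'$. The complex is \emph{exact} at $\textup{Br}(K)'$ --- this is purity for the Brauer group on a regular surface, i.e.\ $\beta \in \textup{Br}(K)'$ lies in $\textup{Br}(X)'$ precisely when all $a_C(\beta)=0$. So your ``diagram chase identifying the cohomology at $\textup{Br}(K)'$ with $\textup{Br}(K)'/H^2_{et}(X,g_*\bG_{m,K})'$'' computes something that is in fact zero, and your identification of that cokernel with $\ker\bigl(H^3_{et}(X,\bGm)'\to H^3_{et}(K,\bG_{m,K})'\bigr)$ is also off: the Leray spectral sequence gives the cokernel as $\ker(d_3)$ on $H^0(X,R^2g_*\bG_{m,K})$, whereas the paper's map has kernel the \emph{image} of $d_3$, and one still has to thread in $\theta$ from (\ref{eq:BrauerGroupLES}).

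Second, your claim that ``only $\textup{Br}(K)'$ replaces the smaller $H^2(X,g_*\bG_{m,K})'$'' is false: the inclusion $H^2_{et}(X,i_{C*}\bZ)' \hookrightarrow H^1_{et}(K(C),\bQ/\bZ)'$ is also proper. Its image is the set of classes whose associated cover of $C$ is \emph{individually} unramified, whereas $\ker r$ consists of tuples whose secondary ramifications \emph{cancel} at each closed point $q\in X^{(2)}$; these differ exactly when two ramification curves meet. The nontrivial cohomology at $\bigoplus_C H^1$ arises from this discrepancy, and to pin it down as $\ker\bigl(H^3_{et}(X,\bGm)'\to H^3_{et}(K,\bG_{m,K})'\bigr)$ one has to combine (\ref{eq:BrauerGroupLES}) with the Leray analysis for both $g$ and the $i_C$ simultaneously, not just enlarge one term. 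Your local identification of $\partial$ with $r$ via purity is the right ingredient, but the global assembly around it needs to be rebuilt with the cohomology placed at the correct spot.
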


This sequence will be the main tool for determining possible ramification data. Given an element $(z_C) \in \medoplus_C H^1_{et}(K(C), \bQ/\bZ)$, the {\em primary obstruction} to this coming from $\textup{Br}\, K$ is that $r(z_C) = 0$. We refer to this condition as {\em secondary ramification cancelling} since a key case is when there are two ramification curves $C_1, C_2$ which intersect at some point $q$ and their ramifications there,  $r(z_{C_1})_q, r(z_{C_2})_q \in \mu^{-1}$ are inverses of each other. Let $\ker := \ker \left(r \colon \medoplus_C H^1_{et}(K(C), \bQ/\bZ) \to \medoplus \mu^{-1}\right)$. There remains a {\em secondary obstruction map} $ob \colon \ker \to H^3_{et}(X, \bG)'$. 

\section{Discrepancy} \label{sec:discrepancy}
In this section, we review the notions of log discrepancy and b-discrepancy for Brauer log pairs $(X,\beta)$ where $X$ is an arithmetic surface with function field $K$ and $\beta \in \textup{Br}(K)'$ is a Brauer class whose index is prime to all the residue characteristics.

For each $C \subset X$, an irreducible curve, there is a ramification map 
$$ a_C \colon \textup{Br}(K)' \to  H^1_{et}(K(C), \bQ / \bZ)' 
$$
As in the introduction, we obtain an associated boundary divisor $\Delta_{X,\beta} \in \textup{Div}\, X$ defined by 
\begin{equation*}
\Delta_{X,\beta} = \sum_C \left( 1 - \frac{1}{n_C} \right) C, \quad n_C:= \text{order of } a_C(\beta)
\end{equation*}
and hence associated log surface $(X, \Delta_{X,\beta})$. We also define 
\begin{equation}
K_{X,\beta} = K_X + \Delta_{X,\beta}.
\end{equation}

Now consider a proper birational morphism $f \colon Y \to X$ and an  exceptional curve $E \subset Y$. Since $K(Y) = K(X)$, one can consider the Brauer log pair $(Y,\beta)$. There exist rational numbers $a_E, b_E$ such that 
\begin{eqnarray}
K_Y + f^{-1}_*\Delta_{X,\beta} & = & f^*K_{X,\beta} + \sum_E a_E E \\
K_{Y,\beta} & = & f^*K_{X,\beta} + \sum_E b_E E
\end{eqnarray}
We say $a_E$ is the {\em log discrepancy} of $(X,\beta)$ along $E$ and $b_E$ is the b-discrepancy. We say $(X,\Delta_{X,\beta})$ is {\em log terminal} if for all $f$ and $E$, we have $a_E > -1$ and we say $(X,\beta)$ is {\em terminal (resp. canonical)} if all the $b_E >0$ (resp. $b_E \geq 0$). 
\begin{remark}  \label{rem:logIsLogTerminal}
Evidently, we have the formula
\begin{equation*}
    b_E = a_E + 1- \frac{1}{n_E}
\end{equation*}
so $(X,\beta)$ terminal guarantees that the associated log surface is log terminal. 
\end{remark}

One may compute discrepancy \'etale locally on $X$, but with a bit of care as we elaborate now. Let $x \in X$ be a closed point and consider the restricted Brauer class $\beta_x \in Br K(\cO_{X,x}^h)$. Now the ramification map commutes with \'etale localisation, so $\beta_x$ does determine all the $n_E$ for exceptional curves over $x$, However, $\beta_x$ does not determine the global log boundary $\Delta_{X,\beta}$ even locally at $x$ since $H^1_{et}(K(C), \bQ/\bZ) \to H^1(K(\cO_{C,x}^h),\bQ/\bZ)$ is not necessarily injective. This corresponds to the fact that a degree $n_C$ ramification cover $\Ctilde \to C$ may split into $g_C>1$ components upon restriction to an \'etale local neighbourhood of $x \in C$. 

This suggests
\begin{definition} \label{def:localBrauerclass}
A {\em localised Brauer class} $(\beta, g_C)$ on $X$ consists of a Brauer class $\beta \in \textup{Br}\, K(X)'$ and positive integers $g_C$, for each irreducible affine curve $C \subset X$, such that all but finitely many $g_C$ equal 1 and all are coprime to the residue characteristics.
\end{definition}

\begin{example}
Let $q=p^m$ be prime power.  Let $n\mid q-1$ and let $\zeta$ be a primitive $n^{\mbox{th}}$ root of unity. 
Let $R=\mathbb{F}_q[x,y]$ and let $A=R\langle u,v\rangle/(u^n-x,v^n-y,uv-\zeta vu).$
Let $\beta \in \textup{Br} K(R)$ be the Brauer class of $A\otimes K(R)$.  
When we henselise at a point in $V(xy) \subset \Spec R$ that is not the origin,
for example $z=(1,0),$ we obtain $\beta|_{R^h_z} = 0$ and $g_C =0$
for all curves except $D = V(y)$ where $g_D=1$. The $g_D$ in the localised Brauer class keeps track of global ramification that becomes trivial on henselisation.
\end{example}

Now consider proper birational maps $X' \to X \to X''$. Then a localised Brauer class $(\beta, g_C)$, naturally induces the localised Brauer class $(\beta, g_{f^{-1}_*C})$ on $X'$ and $(\beta, g_{f_*C})$ on $X''$. The collection of all these localised Brauer classes as $X', X''$ vary will also be called a {\em localised Brauer class} on the birational equivalence class of $X$ and will be denoted $\tilde{\beta} = (\beta,g)$. It now becomes clear that  
we can define b-discrepancy, terminal etc for localised Brauer classes by modifying the definition of the associated boundary divisor with $n_C = e_C g_C$ where $e_C$ is the order of $a_C(\beta)$. We call $n_C$ the {\em ramification index} of the localised Brauer class $(\beta, g_C)$ at $C$ and when $n_C >1$, we say that $C$ is a {\em ramification curve}.

\section{The secondary obstruction for rational resolutions}
\label{sec:2ndobstruction}

Our goal in this section, is to study how ramification behaves under blowups and, more generally, birational transformations. In the geometric case studied in \cite{CI05}, this can be deduced from the Artin-Mumford sequence in Theorem~\ref{thm:ArtinMumford}. This is comparatively easy, because in computing the secondary obstruction map $ob \colon \ker \to H^3_{et}(X, \bG)'$ (notation as in Section~\ref{sec:ArtinMumford}) we may restrict to \'etale neighbourhoods of the exceptional curves where $H^3_{et}(X, \bG_m)$ vanishes. Thus we need only consider the primary obstruction. 

For arithmetic surfaces, this is no longer true, and unfortunately, we do not know how to compute the secondary obstruction in general. However, in the notation of Equation~(\ref{eq:BrauerGroupLES}), we have $H^2_{et}(X, i_{C*}\bZ)' \subseteq \ker$ and $ob$ restricted to this subgroup is the boundary map $\partial$ from \ref{eq:BrauerGroupLES}. In this section, 
we compute $\partial$ in the case of rational resolutions. This will give the secondary obstruction map in the case where there is no secondary ramification. 

Let $R$ be a  commutative two-dimensional excellent normal noetherian Hensel local domain with finite residue field $\kappa$. Let $f \colon X \to \Spec R$ be a rational resolution, that is, a projective birational morphism where $X$ is regular and $H^1(X, \cO_X) = 0$. 

We first compute $H^3_{et}(X, \bGm)'$. Let $\Rtilde$ be the strict henselisation of $R$ and $\Xtilde = X \times_R \Rtilde$ and $G  = \textup{Gal}(\bar{\kappa}/\kappa) \simeq \hat{\bZ}$ be the Galois group of $\Rtilde/R$. We use the Hochschild-Serre spectral sequence
$$ H^p(G,H^q_{et}(\Xtilde,\bGm))  \Longrightarrow 
H^{p+q}_{et}(X, \bGm).
$$
\begin{lemma}  \label{lem:H3Gm}
We have $H^3_{et}(X,\bGm) \simeq H^2(G,\Pic \Xtilde).$
\end{lemma}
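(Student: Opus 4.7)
The plan is to unpack the Hochschild-Serre spectral sequence
\[
E_2^{p,q} = H^p(G, H^q_{et}(\Xtilde, \bG_m)) \Rightarrow H^{p+q}_{et}(X, \bG_m)
\]
and to isolate which of its four diagonal entries $E_2^{0,3}, E_2^{1,2}, E_2^{2,1}, E_2^{3,0}$ contribute in total degree three. The target statement is that $E_2^{2,1} = H^2(G, \Pic \Xtilde)$ is the unique non-zero $E_\infty$-term on this diagonal, and that no differential in or out of $E_r^{2,1}$ is non-trivial, so that the filtration on $H^3_{et}(X, \bG_m)$ collapses onto it.

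I would first record the structural input. Because $f \colon \Xtilde \to \Spec \Rtilde$ is a rational resolution with strictly Hensel base, the exceptional fibre $\Xtilde_s$ is a tree of smooth rational curves over the separably closed residue field $\bar\kappa$, and $\bG_m(\Xtilde) = \Rtilde^\times$. Kummer cohomology together with proper base change for the prime-to-residue-characteristic torsion sheaves reduces $H^q_{et}(\Xtilde, \mu_n)$ to $H^q_{et}(\Xtilde_s, \mu_n)$, which vanishes for $q \geq 3$ since $\Xtilde_s$ has \'etale cohomological dimension at most two. A parallel argument, using that the Brauer group of a tree of smooth rational curves over $\bar\kappa$ is zero, gives $\Br \Xtilde = 0$. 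Hence $E_2^{0,3} = 0$ and $E_2^{1,2} = H^1(G, \Br \Xtilde) = 0$.

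For $E_2^{3,0} = H^3(G, \Rtilde^\times)$, I would exploit the principal-units sequence $1 \to 1 + \mathfrak{m}_{\Rtilde} \to \Rtilde^\times \to \bar\kappa^\times \to 1$. The quotient $\bar\kappa^\times$ is torsion and $G = \widehat{\bZ}$ has cohomological dimension one on torsion modules, so $H^{\geq 2}(G, \bar\kappa^\times) = 0$. The kernel $1 + \mathfrak{m}_{\Rtilde}$ is pro-nilpotent with graded pieces $\mathfrak{m}_{\Rtilde}^n / \mathfrak{m}_{\Rtilde}^{n+1}$ finite-dimensional over $\bar\kappa$ with semilinear $G$-action; additive Hilbert~90 (normal basis theorem) kills their higher Galois cohomology, and a careful inverse-limit argument propagates this to $H^{\geq 2}(G, 1 + \mathfrak{m}_{\Rtilde}) = 0$. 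Combining, $H^{\geq 2}(G, \Rtilde^\times) = 0$, so both $E_2^{3,0}$ and $E_2^{4,0}$ vanish.

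It remains to see that no differential alters $E_r^{2,1}$. The incoming $d_2 \colon E_2^{0,2} \to E_2^{2,1}$ starts at $\Br(\Xtilde)^G = 0$, the outgoing $d_2 \colon E_2^{2,1} \to E_2^{4,0}$ lands at the vanishing term just computed, and for $r \geq 3$ any differential touching $E_r^{2,1}$ starts or ends either off the first quadrant or in an entry already shown to vanish. Hence $E_\infty^{2,1} = E_2^{2,1}$, and the spectral sequence filtration on $H^3_{et}(X, \bG_m)$ has a single non-zero graded piece, yielding the isomorphism $H^3_{et}(X, \bG_m) \simeq H^2(G, \Pic \Xtilde)$. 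The main obstacle I anticipate is the higher Galois cohomology calculation $H^{\geq 2}(G, \Rtilde^\times) = 0$: conceptually standard via the filtration and additive Hilbert~90, but one must verify that the inverse limit is well-behaved enough to transport vanishing through, which relies on each graded piece being a finite-dimensional $\bar\kappa$-vector space.
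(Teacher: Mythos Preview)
Your overall strategy matches the paper's: run Hochschild--Serre, kill the other $E_2$-terms on the diagonal $p+q=3$, and check that no differential touches $E_2^{2,1}$. The arguments for $H^3_{et}(\Xtilde,\bGm)'=0$ and $\Br(\Xtilde)'=0$ via proper base change and Kummer are also essentially the same, though for the latter the paper is explicit about a step you elide: passing from $\Br(\Xtilde_0)=0$ to $\Br(\Xtilde)'=0$ uses the surjectivity of $\Pic\Xtilde \to \Pic\Xtilde_0$, for which it invokes Lipman.

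The substantive gap is in your computation of $H^{\geq 2}(G,\Rtilde^\times)$, and it is precisely the obstacle you flag. The strict henselisation $\Rtilde$ is essentially never $\mathfrak{m}$-adically complete, so $1+\mathfrak{m}_{\Rtilde}$ is \emph{not} the inverse limit of its quotients $(1+\mathfrak{m})/(1+\mathfrak{m}^n)$, and the ``propagate vanishing through the inverse limit'' step fails as written: knowing $H^p(G,(1+\mathfrak{m})/(1+\mathfrak{m}^n))=0$ for all $n$ does not by itself yield $H^p(G,1+\mathfrak{m}_{\Rtilde})=0$. A clean repair that avoids completeness is to note that $1+\mathfrak{m}_{\Rtilde}$ is uniquely $n$-divisible for $n$ prime to $\textup{char}\,\kappa$ (apply Hensel to $(1+Y)^n-(1+x)$), so its higher Galois cohomology, being torsion, has trivial prime-to-characteristic part. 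The paper bypasses the filtration entirely: since $\Rtilde$ is strictly henselian, Hochschild--Serre for $\Spec\Rtilde \to \Spec R$ collapses to give $H^p(G,\Rtilde^\times) = H^p_{et}(\Spec R,\bGm)$, and this equals $H^p_{et}(\Spec\kappa,\bGm)=0$ for $p>0$ by the standard identification for henselian local rings with finite residue field.
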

\begin{proof}
It suffices to show that the Hochschild-Serre spectral sequence degenerates appropriately. We first show that $\textup{Br}(\Xtilde)'=0$. By the Kummer exact sequence, it suffices to show that elements of $H^2_{et}(\Xtilde,\mu_d)'$ lift to $H^1(\Xtilde,\bGm) = \Pic \Xtilde$. Let $\Xtilde_0$ be the closed fibre of $\Xtilde \to \Spec \Rtilde$. The proper base change theorem \cite[Chapter~VI, Corollary~2.7]{Mil} shows that $H^2_{et}(\Xtilde,\mu_d)' = H^2_{et}(\Xtilde_0,\mu_d)'$. Now the Brauer group of $\Xtilde_0$ is trivial, so every element of $H^2_{et}(\Xtilde_0,\mu_d)'$ comes from a line bundle on $\Xtilde_0$. To conclude $\textup{Br}(\Xtilde) = 0$, it remains now only to note that $\Pic \Xtilde \to \Pic \Xtilde_0$ is surjective by \cite[Lemma~14.3]{Lip}. Now consider the terms $H^p(G,\Rtilde^{\times}) = H^p_{et}(\Spec R, \bG_{m,R}) = H^p_{et}(\Spec \kappa, \bG_{m,\kappa}) = 0$ for $p >0$ by \cite[Chapter~III, Remark~3.11(a)]{Mil}. 

It remains now only to show that $H^3_{et}(\Xtilde,\bG)' = 0$. Let $l$ be a positive integer relatively prime to $\textup{char}\, \kappa$. From the proper base change theorem, we have $H^3_{et}(\Xtilde,\mu_{l^r}) = H^3_{et}(\Xtilde_0, \mu_{l^r})$ which is zero since $\Xtilde_0$ is a curve defined over an algebraically closed field. It follows from the Kummer exact sequence
as per \cite[Corollaire~3.2]{GBII} that the $l$-torsion part of $H^3_{et}(\Xtilde,\bG)$ is also zero. 
\end{proof}

Recall that $f:X \to \Spec R$ is a rational resolution. Let $C$ be a curve on $X$. If $C$ is an exceptional curve, then it is rational by assumption. Suppose that $H^0(C, \cO_C) =\kappa'$ is a degree $r$ extension of $\kappa$ corresponding to the subgroup $H = rG < G$. Now $\text{Br}\, \kappa' = 0$, so $C \simeq \bP^1_{\kappa'}$ and the only \'etale covers are projective lines over field extensions of $\kappa'$. The group $H^2_{et}(X, i_{C*}\bZ)$ classifying \'etale ramification data over $C$ is thus naturally isomorphic to $H^2(H, \bZ)$ where $H$ acts trivially on $\bZ$. If $C$ is not exceptional, then it is the strict transform of  curve on $\Spec R$ and hence henselian. In particular, the \'etale covers are classified by the \'etale covers of the closed point. Suppose the closed point is $\Spec \kappa'$, where $[\kappa': \kappa] = r$ as before and $H = rG$. We again have $H^2_{et}(X, i_{C*} \bZ) \simeq H^2(H,\bZ)$. 

In both cases, we will use the Hochschild-Serre spectral sequence to make the isomorphism $H^2_{et}(X, i_{C*} \bZ) \simeq H^2(H,\bZ)$ explicit. To this end, note that $\Ctilde := C \times_R \Rtilde \simeq \coprod_{j=1}^r \Ctilde_j$ for suitable curves $\Ctilde_j \subset \Xtilde$. Given an $H$-modules $M$, we let $M^{|G/H|}$ denote the co-induced $G$-module $\Hom_{\bZ H}(\bZ G, M)$. The Hochschild-Serre spectral sequence collapses to give an isomorphism 
$$ H^2_{et}(X, i_{C*} \bZ) \simeq H^2(G, H^0_{et}(\Xtilde, \medoplus_j i_{\Ctilde_j*} \bZ)) \simeq H^2(G, \bZ^{|G/H|}) \simeq H^2(H, \bZ).
$$
To describe the boundary map $\partial$ of Equation~(\ref{eq:BrauerGroupLES}), we re-write $\bZ^{|G/H|}$ above more geometrically as $\medoplus_j \bZ \Ctilde_j$. More generally we will use this notation for
$H^i(G,A C)$ where $A$ is an abelian group and $C$ indicates a the component of a direct sum indexed by curves including $C$.  The following is clear.
\begin{lemma}  \label{lem:boundarymap}
The boundary map $\partial \colon \medoplus_j H^2(G, \bZ \Ctilde_j) \to H^2(G, \Pic \Xtilde)$ is the one induced by the first Chern class map $\Ctilde_j \mapsto \cO(\Ctilde_j)$. 
\end{lemma}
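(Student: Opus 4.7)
The plan is to deduce the lemma from the functoriality of the Hochschild--Serre spectral sequence applied to the short exact sequence of \'etale sheaves
\[
0 \to \bGm \to g_*\bG_{m,K} \to \medoplus_D i_{D*}\bZ \to 0
\]
on $\Xtilde$, with $D$ ranging over the curves of $\Xtilde$. Because both the source $H^2_{et}(X, i_{C*}\bZ)$ and the target $H^3_{et}(X, \bGm)$ of $\partial$ arise from a single non-zero term on the $E_2$-page, $\partial$ should reduce to $H^2(G,-)$ applied to the corresponding connecting map at the level of $\Xtilde$, which we can then identify concretely with the first Chern class.

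First I would confirm the relevant degenerations. The target is handled by Lemma~\ref{lem:H3Gm}. For the source, it suffices to check that $H^q_{et}(\Xtilde, i_{C*}\bZ)' = 0$ for $q = 1, 2$; since the closed immersion $i_C$ has exact pushforward and $i_{C*}\bZ$ restricts on $\Xtilde$ to $\medoplus_j i_{\Ctilde_j*}\bZ$, this reduces to showing $H^q_{et}(\Ctilde_j, \bZ)' = 0$ for $q = 1, 2$. For $q = 1$ this is the absence of non-trivial $\bZ$-torsors on the connected scheme $\Ctilde_j$; for $q = 2$, a Kummer argument reduces it to the triviality of $\textup{Br}(\Ctilde_j)'$ and the divisibility of $\Pic \Ctilde_j$, both of which hold whether $\Ctilde_j \simeq \bP^1_{\bar{\kappa}}$ (when $C$ is exceptional) or $\Ctilde_j$ is henselian local (when $C$ is the strict transform of a henselian curve on $\Spec R$).

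Next I would extract the long exact cohomology sequence on $\Xtilde$ associated to the short exact sheaf sequence above. The relevant piece is
\[
H^0(\Xtilde, g_*\bGm) \to \medoplus_D H^0(\Xtilde, i_{D*}\bZ) \xto{\delta} H^1(\Xtilde, \bGm) = \Pic \Xtilde,
\]
i.e.\ the standard Weil/Cartier presentation of the Picard group of the regular scheme $\Xtilde$, so that $\delta$ sends $\sum n_D [D]$ to the class of $\cO_{\Xtilde}(\sum n_D D)$. Restricting to the summands indexed by the components of $\Ctilde$ gives precisely the $G$-equivariant first Chern class map $\medoplus_j \bZ \Ctilde_j \to \Pic \Xtilde$, $\Ctilde_j \mapsto \cO(\Ctilde_j)$.

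The final step is to invoke compatibility of the Hochschild--Serre spectral sequence with the short exact sequence of sheaves, most transparently via the derived functor identity $R\Gamma(X, -) \simeq R\Gamma(G, R\Gamma(\Xtilde, -))$, to conclude that $\partial$ equals $H^2(G, \delta)$ on the $C$-summand. The main obstacle is controlling the potential spectral-sequence differentials: one must check that the collapse established above forces $\partial$ to be computed entirely from the bottom-row contribution, so that no higher differential intervenes and the identification with the Chern class map is exactly as stated.
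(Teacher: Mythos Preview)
Your proposal is correct and is precisely the natural argument one would give; the paper itself offers no proof, asserting only that ``the following is clear.'' Your unpacking via the composite $R\Gamma(X,-) \simeq R\Gamma(G, R\Gamma(\Xtilde,-))$, together with the vanishing of $H^q_{et}(\Xtilde, i_{C*}\bZ)'$ for $q\geq 1$ and of $H^q_{et}(\Xtilde,\bGm)'$ for $q\geq 2$ (the latter established inside the proof of Lemma~\ref{lem:H3Gm}), is exactly what justifies the word ``clear'': both source and target collapse to a single Hochschild--Serre term, and the induced map is $H^2(G,-)$ applied to the connecting homomorphism $H^0(\Xtilde,\medoplus_D i_{D*}\bZ) \to H^1(\Xtilde,\bGm) = \Pic\Xtilde$, which is the divisor class map. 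The residual worry you flag about stray differentials is handled by working in the derived category: since $R\Gamma(\Xtilde, i_{C*}\bZ)'$ is concentrated in degree $0$ and $H^p(G,\Rtilde^{\times}) = 0$ for $p>0$, the morphism $R\Gamma(\Xtilde, i_{C*}\bZ)' \to R\Gamma(\Xtilde,\bGm)'[1]$ induces on $H^2(G,-)$ exactly $H^2(G,\delta)$, with no interference from the $\Rtilde^{\times}$ term.
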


The next result computes both $H^2(G, \Pic \Xtilde)$ and the boundary map in the cases of interest. Below, we will view elements of $H^1(G,?)$ as cohomology classes of crossed homomorphisms. Let $H< G$ be the subgroup associated to some curve $C \subset X$ as above and $M$ a $G$-module. Recall there is a cohomological restriction functor $\text{res} \colon H^q(G,M) \to H^q(H,M)$ and an elementary computation shows that on $H^1$, this amounts to restricting the domain of a crossed homomorphism from $G$ to $H$. Similarly, the corestriction functor $\text{cores} \colon H^1(H,M) \to H^1(G,M) \colon \phi \mapsto \phi \circ \lambda$ where $\lambda\colon G \to H$ is the group isomorphism induced by multiplication by $[G : H]$. Below, we compute intersection numbers using lengths of $R$-modules. 

\begin{proposition}  \label{prop:H2GratRes}
Let $E_i$ be the exceptional curves of $f \colon X \to \Spec R$ and $H_i \leq G$ be the subgroup corresponding to the field extension $H^0(E_i, \cO_{E_i})/\kappa$. Let $(\bQ/\bZ)_{E_i}$ denote a copy of $\bQ/\bZ$ associated to the exceptional $E_i$ and on which $H_i$ acts trivially. 
\begin{enumerate}
    \item[(0)] $H^2(G, \Pic \Xtilde) \simeq \medoplus_i H^1(H_i,(\bQ/\bZ)_{E_i})$ is a natural isomorphism.
\end{enumerate}
Let now $C \subset X$ be an irreducible curve and $H\leq G$ the subgroup above, determined by any closed point of $C$. Consider the composite map 
$$\partial_i \colon H^1(H, (\bQ/\bZ)C) \simeq H^2(H, \bZ C) \xto{\partial} H^2(G, \Pic \Xtilde) \to H^1(H_i, (\bQ/\bZ)_{E_i})$$ 
where the last map is the one induced by projection and the isomorphism of part (0).
\begin{enumerate}
    \item If $H = H_i$ has index $r$ in $G$ and $C.E_i = rm$, then $\partial_i$ is multiplication by $m$.
    \item If $H = G, [G : H_i] = r, C.E_i = rm$, then $\partial_i \colon H^1(G, (\bQ/\bZ)C) \to H^1(H_i, (\bQ/\bZ)_{E_i})$ is given by $m$ times the natural restriction map. 
    \item Suppose $H_i = G, [G : H] = r, C.E_i = rm$. Then $\partial_i$ is $m$ times the corestriction map. 
\end{enumerate}
\end{proposition}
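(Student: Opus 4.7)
The strategy is first to pin down $\Pic\,\Xtilde$ as a $G$-module, and then track the boundary map $\partial$ under Shapiro's lemma.

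For (0), the rational resolution hypothesis, together with $\Rtilde$ being strictly henselian, should yield a $G$-equivariant isomorphism $\Pic\,\Xtilde \xto{\sim} P^\vee$ via the intersection pairing, where $P := \medoplus_{l,k}\bZ\tilde E_{l,k}$ is the free abelian group on the exceptional curves of $\Xtilde$ and $P^\vee := \text{Hom}_\bZ(P,\bZ)$. One checks this via the chain $P \hookrightarrow \Pic\,\Xtilde \hookrightarrow P^\vee$, in which the first cokernel is the local class group $\text{Cl}(\Rtilde)$, the composite cokernel is the discriminant group $P^\vee/P$, and for rational singularities these coincide (by Lipman), forcing $\Pic\,\Xtilde = P^\vee$. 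Now $G$ permutes the dual basis $\tilde E_{l,k}^\vee$ with orbits indexed by the exceptional curves $E_l$ of $X$ and stabilizers $H_l$, so $P^\vee \simeq \medoplus_l \text{Ind}_{H_l}^G \bZ$ as $G$-modules. Shapiro's lemma yields $H^2(G, P^\vee) \simeq \medoplus_l H^2(H_l, \bZ)$, and the long exact sequence of $0 \to \bZ \to \bQ \to \bQ/\bZ \to 0$ combined with $H^q(H_l, \bQ) = 0$ for $q > 0$ (unique divisibility plus $H_l$ profinite) identifies each summand with $H^1(H_l, \bQ/\bZ)$.

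For (1)--(3), Lemma~\ref{lem:boundarymap} and the identification $\Pic\,\Xtilde = P^\vee$ let me rewrite $\partial$ as the map induced by $\tilde C_j \mapsto \sum_{l,k}(\tilde C_j . \tilde E_{l,k})\tilde E_{l,k}^\vee$. Projecting to the $i$-th factor yields the $G$-equivariant map
\[
\phi_i \colon \medoplus_j \bZ\tilde C_j \to \medoplus_k \bZ\tilde E_{i,k}^\vee, \qquad \tilde C_j \mapsto \sum_k (\tilde C_j . \tilde E_{i,k})\tilde E_{i,k}^\vee,
\]
and $\partial_i$ is what $\phi_i$ induces on $H^2(G, -)$ after the isomorphisms of (0) and Shapiro. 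In each case $C.E_i = rm$, together with the transitive $G/H$-action on $\{\tilde C_j\}$ and $G/H_i$-action on $\{\tilde E_{i,k}\}$, reduces the intersection data to the single number $m$.

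In case (1) ($H = H_i$ of index $r$), both source and target of $\phi_i$ are isomorphic to $\text{Ind}_H^G \bZ$, and Shapiro's lemma identifies $H^n(G, \text{Ind}_H^G \bZ) \simeq H^n(H, \bZ)$. Since $G$ is abelian, the conjugation action of $G/H$ on $H^n(H, \bZ)$ is trivial, so coset shifts act trivially on cohomology; writing $\phi_i = \sum_k a_k J^k$ where $J$ is a generating coset shift and $a_k = \tilde C_1 . \tilde E_{i,k}$ is pinned down by $G$-symmetry, $\phi_i$ induces multiplication by $\sum_k a_k = C.E_i/r = m$. In case (2) ($H = G$, $[G:H_i] = r$), $G$-symmetry makes all $\tilde C . \tilde E_{i,k}$ equal to a common $m$, so $\phi_i$ is $m$ times the diagonal embedding $\bZ \hookrightarrow \text{Ind}_{H_i}^G \bZ$, which via Shapiro corresponds to $m \cdot \text{res}$. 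In case (3) ($H_i = G$, $[G:H] = r$), $G$-symmetry makes $\tilde C_j . \tilde E_{i,1} = m$ for all $j$, so $\phi_i$ is $m$ times the sum map $\text{Ind}_H^G \bZ \to \bZ$, which via Shapiro corresponds to $m \cdot \text{cores}$. The main obstacle is the initial identification $\Pic\,\Xtilde \simeq P^\vee$, which rests on a careful invocation of the structure theory for rational surface singularities; the case-by-case argument for (1)--(3) is then a direct if intricate Shapiro computation, made tractable by $G$ being abelian.
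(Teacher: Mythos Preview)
Your proof is correct and follows the same overall strategy as the paper: identify $\Pic \Xtilde$ with the permutation $G$-module on the exceptional components via the degree/intersection map, apply Shapiro's lemma, and invoke Lemma~\ref{lem:boundarymap} to realise $\partial_i$ as the map on $H^2(G,-)$ induced by intersection with exceptional components. The differences are minor but worth noting. For (0), the paper simply asserts that the degree map $\Pic \Xtilde \to \bigoplus_{\Etilde} \bZ_{\Etilde}$ is an isomorphism, whereas you spell out the justification via Lipman's identification of the class group with the discriminant group of the intersection form for rational singularities. For (1), the paper writes ``We may assume that $\Ctilde_j . \Etilde_{il} = m\delta_{jl}$'' and then computes with explicit crossed homomorphisms on the generator of $G$; your argument instead handles an arbitrary ($G$-equivariant, hence circulant) intersection pattern by decomposing $\phi_i = \sum_k a_k J^k$ and using that the coset-shifts $J^k$ act trivially on $H^*(H,\bZ)$ because $G$ is abelian, so only the total $\sum_k a_k = m$ matters. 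This is a genuinely more robust justification, since the diagonal assumption need not hold literally for an arbitrary curve $C$. Parts (2) and (3) match the paper's treatment, which it leaves as ``similar'' to (1).
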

\begin{proof}

For part~(0), recall there is an isomorphism $\deg \colon \Pic \Xtilde \xto{\sim} \medoplus \bZ_\Etilde \colon \cL \mapsto (\deg \cL|_{\Etilde})$ where the $\Etilde$ run over exceptionals of $\Xtilde$. The $G$-orbits of the $\Etilde$ are in 1-1 correspondence with the exceptionals $E_i$ in $X$, and summing over such an orbit we find 
$$\bigoplus_{\Etilde \to E_i} \bZ_{\Etilde} \simeq \bZ^{|G/H_i|}.$$ 
The result follows since $H^2(G,\bZ^{|G/H_i|}) = H^2(H_i, \bZ)  = H^1(H_i,\bQ/\bZ)$.

We now prove part~(1). Consider the curves induced by base change $\Ctilde\times_R \Rtilde = \coprod \Ctilde_j, \ E_i \times_R \Rtilde = \coprod \Etilde_{ij}$. We may assume that $\Ctilde_j .\Etilde_{il} = m \delta_{jl}$ where we have used the Kronecker delta. We write $1$ for the natural generator of $G$ and $r$ for the natural generator of $H=H_i$. Consider the element of $H^1(H,(\bQ/\bZ) C)$ which sends $r\mapsto a C$. The corresponding element of $H^1(G,\medoplus (\bQ/\bZ) \Ctilde_j)$ is the crossed homomorphism $\phi \colon 1 \mapsto a \Ctilde_1$. Now Lemma~\ref{lem:boundarymap} shows us that $\partial_i(\phi)$ is the crossed homomorphism $H^1(G, \medoplus (\bQ/\bZ)_{\Etilde{ij}})$ which maps $1 \mapsto ma \in (\bQ/\bZ)_{\Etilde_{i1}}$. The corresponding crossed homomorphism in $H^1(H_i, (\bQ/\bZ)_{E_i})$ maps $r \mapsto ma$ proving (1). The proofs of (2) and (3) are similar.  
\end{proof}

Using Proposition~\ref{prop:H2GratRes} to remove the secondary obstruction in the Artin-Mumford-Saltman sequence in the case of the blowup of a regular point, immediately gives the ramification along the exceptional below.  
\begin{corollary}  \label{cor:ramexc}
Suppose now $R$ is regular and $f \colon X \to \Spec R$ is the blowup at the closed point $x$. Let $\beta \in \textup{Br}\,K(R)$ be a Brauer class which has no secondary ramification, that is, the ramification along the ramification curves $C_i \subset \Spec R$ is given by an \'etale cover corresponding to an element $\zeta_i \in H^1_{et}(G, \bQ/\bZ)$. Then the ramification along the exceptional curve $E \simeq \bP^1_{\kappa}$ is given by $\sum m_i \zeta_i$ where $m_i = \textup{mult}_x C_i$. 
\end{corollary}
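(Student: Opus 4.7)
The plan is to apply the Artin-Mumford-Saltman sequence (Theorem~\ref{thm:ArtinMumford}) to $X$ and read off $\zeta_E$ from the vanishing of the secondary obstruction $\partial$ as computed in Proposition~\ref{prop:H2GratRes}. Writing $\Ctilde_i$ for the strict transform of $C_i$, the pullback of $\beta$ to $X$ ramifies with class $\zeta_i$ along each $\Ctilde_i$ (since ramification depends only on the generic point) together with some class $\zeta_E \in H^1_{et}(K(E), \bQ/\bZ)'$ to be determined, and nowhere else. I would first observe that $\zeta_E$ automatically has no secondary ramification: by hypothesis each $\zeta_i$ has none, so at each intersection point of $\Ctilde_i$ with $E$ there is nothing to cancel against, while at points of $E$ off every $\Ctilde_i$ the primary obstruction forces $\zeta_E$ to be unramified. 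Consequently $\zeta_E \in H^1_{et}(E, \bQ/\bZ)' = H^1(G, \bQ/\bZ)'$, using $E \simeq \bP^1_\kappa$.

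Since $\beta$ lifts to a class in $\textup{Br}(X)'$, the full ramification datum on $X$ lies in $\ker \partial$. The only exceptional is $E$, with $H_E = G$, so Proposition~\ref{prop:H2GratRes}(0) identifies the target of $\partial$ as $H^1(G, \bQ/\bZ)'$. The contribution from $E$ itself is computed by case~(1) of that proposition with $r=1$ and $m = E^2 = -1$, giving $-\zeta_E$. For each $\Ctilde_i$ I would sum contributions over its closed points $q \in \Ctilde_i \cap E$: at such a $q$ with associated subgroup $H_q \leq G$ of index $r_q$, case~(3) gives a contribution of $m_q \, \textup{cores}_{H_q}^{G}(\textup{res}\, \zeta_i)$, where $r_q m_q$ is the local intersection multiplicity. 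Since $\textup{cores} \circ \textup{res}$ acts as multiplication by $[G : H_q] = r_q$ on $H^1$ with trivial coefficients, this is $r_q m_q \zeta_i$, and summing over $q$ collapses to $(\Ctilde_i . E)\zeta_i = m_i \zeta_i$ via the standard blowup identity $\Ctilde_i . E = \textup{mult}_x C_i$ obtained from $f^* C_i = \Ctilde_i + m_i E$ together with $E^2 = -1$.

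Equating the total obstruction to zero yields $-\zeta_E + \sum_i m_i \zeta_i = 0$, hence $\zeta_E = \sum_i m_i \zeta_i$. The main obstacle I anticipate is the bookkeeping when $\Ctilde_i$ meets $E$ at closed points with non-trivial residue extensions of $\kappa$, where one must juggle several subgroups $H_q \leq G$ simultaneously; the identity $\textup{cores} \circ \textup{res} = [G:H]$ on trivial-action $H^1$ is precisely what makes the local contributions reorganise into the clean intersection number $m_i$.
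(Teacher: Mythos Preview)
Your proposal is correct and is exactly the argument the paper has in mind: apply the exact sequence~(\ref{eq:BrauerGroupLES}) on the blowup, observe the ramification datum lies in $\ker\partial$, and read off $\zeta_E$ from Proposition~\ref{prop:H2GratRes} (the paper's own proof is the single sentence immediately preceding the corollary). One small slip: you wrote ``$\beta$ lifts to a class in $\textup{Br}(X)'$'', but such a class would be unramified; you mean that $\beta \in \textup{Br}(K(X))'$ (more precisely, its image in $H^2_{et}(X,g_*\bG_{m,K})'$) has ramification datum annihilated by $\partial$ by exactness of~(\ref{eq:BrauerGroupLES}).
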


%

\section{Terminal Resolutions}

In this section, we establish the existence of terminal resolutions in our context. 
Let $X$ be an arithmetic surface and $K$ be its field of fractions. We consider a localised Brauer class $(\beta, g_C)$ on $X$ where as usual, the index of $\beta$ and the $g_C$ are all coprime to all the residue characteristics. We have an associated log surface $(X, \Delta)$ as per Section~\ref{sec:discrepancy}.

\begin{definition}  \label{def:GoodLog}
A log resolution $f \colon Y \to X$ is said to be {\em good} if on writing $K_Y - f^*(K_X + \Delta) = B - A$ where $A,B$ are effective divisors, we have $\Supp A$ is regular scheme.
\end{definition}

This notion is useful for bounding discrepancies as the following standard result shows.

\begin{proposition}  \label{prop:ExistGoodLogRes}
Suppose that $(X,\Delta)$ is klt. Then there exists a good log resolution $f \colon Y \to X$. Furthermore, any exceptional curve $E$ over $X$ (for some resolution $Y' \to X$) which has non-positive discrepancy is an exceptional curve of $f$.
\end{proposition}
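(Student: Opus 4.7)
The plan is to mirror the standard construction of good log resolutions from commutative surface MMP and then verify the discrepancy bookkeeping. For existence I would proceed in two stages. First, invoke Lipman's resolution for excellent surfaces to produce a log resolution $f_1 \colon Y_1 \to X$ on which the union of the exceptional locus and the strict transform of $\Delta$ has simple normal crossings. Writing $K_{Y_1} - f_1^*(K_X + \Delta) = B_1 - A_1$ with $A_1, B_1$ effective and sharing no components, the klt hypothesis forces every coefficient of $A_1$ into $[0,1)$: strict transforms of $\Delta$ contribute $1 - 1/n_C \in (0,1)$, while each exceptional of negative discrepancy contributes $-a_E \in (0,1)$. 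Second, I would iteratively blow up points at which two distinct components of $A$ meet, so that the negative part of the discrepancy acquires a support that is a disjoint union of regular curves. Termination here reduces to the standard log MMP fact that a klt surface pair has only finitely many exceptional divisorial valuations of log discrepancy $\leq 0$, so the eventual components of $A$ over $X$ form a finite set and separating them is a finite combinatorial process.

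The second assertion rests on a single multiplicity computation. Let $f \colon Y \to X$ be good, let $p \in Y$ be a closed point, and let $g \colon Y' \to Y$ be the blowup with exceptional $E'$. Expanding $K_{Y'} - (fg)^*(K_X + \Delta) = g^*(B - A) + E'$ and reading off the coefficient of $E'$ gives
\begin{equation*}
a_{E'} = 1 + \textup{mult}_p(B) - \textup{mult}_p(A).
\end{equation*}
Because $\Supp A$ is regular and $Y$ is regular, at most one component of $A$ passes through $p$ and does so with multiplicity one, so $\textup{mult}_p(A) < 1$ by klt; hence $a_{E'} > 0$. The good property transfers from $f$ to $fg$ since the strict transform of a disjoint union of regular curves remains such and $E'$ does not enter the new $A$-divisor. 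By induction, every exceptional curve produced by iterated blowups of closed points over $Y$ has strictly positive log discrepancy.

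To finish, given an exceptional curve $E'$ over $X$ on some resolution $Y' \to X$ with $a_{E'} \leq 0$, consider the center on $Y$ of the divisorial valuation $\textup{ord}_{E'}$. If that center is a curve $E \subset Y$ then $\textup{ord}_{E'} = \textup{ord}_E$, so $E$ is one of the exceptional curves of $f$. Otherwise the center is a closed point and the inductive observation above forces $a_{E'} > 0$, contradicting the hypothesis. I expect the principal obstacle to be the termination of the separating blowups in the existence step: the naive invariant (sum of coefficients at intersections) is not monotone, and one must instead appeal to the finiteness of exceptional divisorial valuations with log discrepancy $\leq 0$ for klt pairs. Fortunately the regularity of $\Supp A$ is a condition on $Y$ alone, unaffected by the Brauer class, so the whole argument reduces to the commutative surface statement, which is well-documented in the log MMP literature.
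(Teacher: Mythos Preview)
Your proposal is correct and essentially reproduces the standard argument; the paper's own proof is simply the sentence ``This is essentially \cite[Proposition~2.36]{KM}.'' You have unpacked what that citation contains, including the key multiplicity computation $a_{E'} = 1 + \textup{mult}_p(B) - \textup{mult}_p(A)$ and the use of Zariski factorisation in dimension two to reduce arbitrary exceptional valuations to iterated closed-point blowups of $Y$.

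One small remark on your termination worry: as written, invoking ``finiteness of exceptional valuations with log discrepancy $\leq 0$'' to terminate the separating blowups risks looking circular, since that finiteness is exactly the second clause of the proposition. The clean way out, which you implicitly have available, is to run your second-paragraph computation already on the initial log resolution $Y_1$ (where $\Supp(A_1 + B_1)$ is merely SNC rather than $\Supp A_1$ regular): at any closed point at most two components pass, each with multiplicity one and coefficient in $[0,1)$, so every blowup has discrepancy $> -1$, and an elementary induction on the pair of coefficients at a node shows only finitely many new exceptionals can land in $A$. This is the content of \cite[Proposition~2.36]{KM} and removes the apparent circularity.
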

\begin{proof}
This is essentially \cite[Proposition~2.36]{KM}.
\end{proof}

We can construct terminal resolutions as per \cite{11authors} using the following procedure. Replacing $X$ with a resolution, we may assume that $X$ is regular and that $\Delta$ has simple normal crossings. We now consider a good log resolution $f \colon Y \to X$. This can be obtained by successively blowing up closed points, so the exceptional fibre is a string of rational curves (see for example Corollary~\ref{cor:HJspectrumsuffices} in the next section). From Remark~\ref{rem:logIsLogTerminal}, we know the b-discrepancy bounds the log discrepancy, so Proposition~\ref{prop:ExistGoodLogRes} ensures that all prime divisors over $Y$ with non-positive b-discrepancy are actually exceptional curves of $f$. Let $\bfE$ be the set of these curves. By Artin contraction \cite[Thm.27.1]{Lip}\cite[Sect.9.4.1]{Liu02}, we may contract all exceptional curves of $f$ which are not in $\bfE$, to obtain a factorisation 
$$ f \colon Y \to X' \xto{f'} X.
$$
Now \cite[Lemma~2.27]{11authors} (whose proof carries over trivially to our context) shows that $X'$ is terminal. We immediately obtain existence of terminal resolutions.
\begin{theorem}  \label{thm:Resolution}
Let $(\beta,g)$ be a localised Brauer class on an arithmetic surface $X$. Then there exists a proper birational morphism $f \colon Y \to X$ such that $(\beta,g)$ is terminal on $Y$.
\end{theorem}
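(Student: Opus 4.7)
The plan is to follow the standard MMP construction of a terminal model, adapted to the b-discrepancy. First I would reduce to the case where $X$ is regular and the associated boundary divisor $\Delta$ has simple normal crossings, by replacing $X$ with any log resolution of $(X,\Delta)$ and transporting the localised Brauer class $(\beta,g)$ along it as described in Section~\ref{sec:discrepancy}. Since every coefficient of $\Delta$ is of the form $1 - 1/n_C \in [0,1)$, the pair $(X,\Delta)$ is automatically klt, so Proposition~\ref{prop:ExistGoodLogRes} provides a good log resolution $f \colon Y \to X$. Moreover, on an arithmetic surface this $f$ can be realised as a finite sequence of blowups at closed points, so each exceptional fibre over a closed point is a string of rational curves (Corollary~\ref{cor:HJspectrumsuffices}); this gives explicit control over the ramification indices $n_E$ of exceptional curves via the calculations of Section~\ref{sec:2ndobstruction}.

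Next I would use the identity $b_E = a_E + 1 - 1/n_E$ from Remark~\ref{rem:logIsLogTerminal}, which in particular gives $a_E \le b_E$. Hence any prime divisor $E$ over $X$ with non-positive b-discrepancy has non-positive log discrepancy, and is therefore already an exceptional curve of $f$ by the second clause of Proposition~\ref{prop:ExistGoodLogRes}. Let $\mathbf{E}$ denote this finite set of exceptional curves of $f$. Applying Artin's contractibility theorem \cite[Thm.27.1]{Lip} (or \cite[Sect.9.4.1]{Liu02}) to the remaining exceptional curves of $f$, which sit as negative-definite configurations inside the exceptional fibres, produces a factorisation $Y \to X' \xto{f'} X$ in which $X'$ retains precisely the curves of $\mathbf{E}$.

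It then remains to verify that $(X',\beta,g)$ is terminal, i.e.\ that every exceptional divisor over $X'$ has strictly positive b-discrepancy. This is the b-discrepancy analogue of \cite[Lemma~2.27]{11authors}, and I would expect it to carry over essentially verbatim, since the argument there is a formal discrepancy computation leveraging the negativity of the contracted intersection matrix together with the additive relation $b_E = a_E + 1 - 1/n_E$. The main potential obstacle I foresee is simply checking that each ingredient transfers smoothly to the arithmetic setting: klt of the boundary (automatic from $n_C \ge 1$), Artin contraction (available in Lipman/Liu), the discrepancy transformation formula under contraction (formal), and the bookkeeping of ramification indices $n_E$ along exceptional curves under blowup, which is precisely what Section~\ref{sec:2ndobstruction} is designed to supply. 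Once these are in place the argument closes without further surprises.
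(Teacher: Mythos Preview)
Your proposal is correct and follows essentially the same approach as the paper: reduce to $X$ regular with simple normal crossing $\Delta$, take a good log resolution $f\colon Y\to X$ via Proposition~\ref{prop:ExistGoodLogRes}, use $a_E\le b_E$ from Remark~\ref{rem:logIsLogTerminal} to confine all non-positive b-discrepancy divisors to the exceptional set $\mathbf{E}$ of $f$, Artin-contract the complementary exceptionals to get $Y\to X'\to X$, and then invoke \cite[Lemma~2.27]{11authors} to conclude $X'$ is terminal. The only superfluous element is your appeal to Section~\ref{sec:2ndobstruction} for computing the $n_E$: the argument never needs their actual values, only the inequality $b_E\ge a_E$, so that step can be dropped.
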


\section{Fan calculus}  \label{sec:fan}

To determine if a Brauer log pair is terminal or not, one needs to relate the ramification of an exceptional divisor to its log discrepancy. Fortunately, one does not need to know this for all exceptional divisors, just a discrete set that is analogous to the toric exceptional divisors on a toric singularity. In this section, we axiomatise the fan combinatorics of toric Hirzebruch-Jung singularities which captures this information. 
This is motivated by example~\ref{eg:HJsing}.

We define a {\em (strict) Hirzebruch-Jung string} or HJ-string for short, to be a totally ordered set $\cE$, usually written as a type $A$ Dynkin diagram
$$
\cE \colon 
\xymatrix{
E_0 \ar@{-}[r] & E_1 \ar@{-}[r] & \cdots \ar@{-}[r]& E_r \ar@{-}[r] & E_{r+1} 
}, 
$$
equipped with integers $m_1, \ldots, m_r \geq 1$ (respectively, $\geq 2$) associated to $E_1,\ldots, E_r$ (and also depending on $\cE$). They will be called {\em weights} which will later come from the negatives of self-intersection numbers of exceptional curves. We will often write $m_{\cE}(E_i) = m_i$. The $E_i$ will be referred to as {\em curves}. We will say $E_i, E_{i+1}$ are {\em adjacent} and the terminology for the total order will be $E_i$ is left of $E_j$ when $i<j$. 

Given the HJ-string $\cE$ above, the {\em blowup of $\cE$ at $E_i \cap E_{i+1}$} is an HJ-string of the form 
$$
Bl_i \cE \colon 
\xymatrix{
E_0 \ar@{-}[r] & E_1 \ar@{-}[r] & \cdots \ar@{-}[r]& E_i  \ar@{-}[r] & 
E  \ar@{-}[r] & E_{i+1}  \ar@{-}[r] & 
 \cdots \ar@{-}[r]& E_r \ar@{-}[r] &
 E_{r+1}
}
$$
with new weights $m_1, \ldots, m_{i-1}, m_i + 1,1,m_{i+1} + 1, m_{i+2}, \ldots, m_r$. It is unique up to choice of object $E$. We say such a blowup is to the {\em left of $E_j$} if $j>i$ and similarly for the right. 

The blowup of an HJ-string $\cE$ may in turn be blown up and the result of such a finite sequence of blowups will also be called a {\em blowup of} $\cE$ and similarly, we say this blowup is to the left (respectively right) of $E_j$ if all the individual blowups are. 

\begin{definition}  \label{def:HJspectrum}
A {\em full HJ-spectrum} consists of a collection $\cE_* = \{ \cE_{\alpha}\}$ of HJ-strings such that the following axioms hold.
\begin{enumerate}
    \item There is a distinguished HJ-string $\cE_0 \colon E_0 - \ldots - E_{r+1}$ in $\cE_*$ called the {\em seed} such that every $\cE_{\alpha} \in \cE_*$ is a blowup of $\cE_0$.
    \item For any $\cE_{\alpha} \in \cE_*$ and adjacent $E,E' \in \cE_{\alpha}$ (respectively, with $E,E' \neq E_0$), there exists a unique blowup of $\cE$ at $E \cap E'$ in $\cE_*$.
    \item For any $\cE_{\alpha} \in \cE_*$ and $E \in \cE_{\alpha}$,  blowups to the left of $E$ commutes with the operation of blowups to the right of $E$. 
\end{enumerate}
Suppose in (2) above, we do not allow arbitrary blowups, but only those which are either to the left of some curve  $E_i\in \cE_0$ or to the right of some $E_j \in \cE_0$. Then we say the resulting collection is an {\em HJ-spectrum} and there is a {\em gap} between $E_i$ and $E_j$. 
\end{definition}
Given an HJ-string $\cE \colon E_0 - \dots - E_{r+1}$, we let $K_{\cE}$ be the subgroup of $\bZ \cE := \medoplus_{i=0}^{r+1}\bZ E_i$ generated by $\{E_{i-1} - m_i E_i + E_{i+1}\ |\ i = 1,\ldots, r\}$. 
\begin{definition} \label{def:fanrep}
A {\em fan representation} of an HJ-string $\cE$ is any surjective homomorphism $\nu \colon \bZ \cE \to \bZ^2$ whose kernel contains $K_{\cE}$. 
The fan obtained by the representation has rays spanned by the $\nu(E_i)$ and maximal cones spanned by adjacent curves $\nu(E_i),\nu(E_{i+1}).$
Let $\cE_*$ be an HJ-spectrum and $\bfE$ be the union of all $\cE_{\alpha} \in \cE_*$, that is, the set of all curves. A {\em fan representation of $\cE_*$} is a homomorphism $\nu \colon \bZ \cE_* :=\medoplus_{E \in \bfE} \bZ E \to \bZ^2$ which restricts to a fan representation on any $\cE \in \cE_*$. 
\end{definition}

\begin{proposition}  \label{prop:fancalculus}
Let $\cE \colon E_0 - \ldots - E_{r+1}$ be an HJ-string and $\nu \colon \bZ \cE \to \bZ^2$ a fan representation.
\begin{enumerate}
    \item $\bZ \cE/K_{\cE}$ is a torsion-free abelian group so $K_{\cE}$ is the kernel of $\nu$. 
    \item If $\cE_*$ is an HJ-spectrum with seed $\cE$, then $\cE_*$ has a unique fan representation $\tilde{\nu}$ which extends $\nu$, that is, $\tilde{\nu}|_{\bZ \cE} = \nu$. 
    \item Let $\bfE$ be the set of all curves in $\cE_*$ as above. Suppose that for adjacent $E,E' \in \cE$ we have $\nu(E),\nu(E')$ are a basis for $\bZ^2$. If $\cE_*$ is an HJ-spectrum, say wth gap between $E_i$ and $E_j$, then $\tilde{\nu}(\bfE)$ is the set of primitive vectors in the cones $\bR_{\geq 0} \nu(E_0) + \bR_{\geq 0} \nu(E_i)$ and $\bR_{\geq 0} \nu(E_j) + \bR_{\geq 0} \nu(E_{r+1})$.
    \item Let $A$ be an abelian group and $\delta \colon \bfE \to A$ be an {\em $\cE_*$-compatible function} in the sense that for any $\cE' \colon E'_0 - \ldots - E'_{l+1}$ in $\cE_*$ we have $\delta(E'_{i-1}) - m_{\cE'}(E'_i) \delta(E'_i) + \delta(E'_{i+1}) = 0$ for all $i$. Then there is a homomorphism  $\bar{\delta}\colon \bZ^2 \to A$ such that $\delta = \bar{\delta} \tilde{\nu}|_{\bfE}$.  
\end{enumerate}
\end{proposition}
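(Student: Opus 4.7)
For part (1), the plan is to use the defining relations $E_{i+1} \equiv m_i E_i - E_{i-1} \pmod{K_\cE}$ to express every $E_j$ with $j \geq 2$ as an integral linear combination of $E_0$ and $E_1$, by induction on $j$. This shows the natural composition $\bZ E_0 \oplus \bZ E_1 \hookrightarrow \bZ\cE \twoheadrightarrow \bZ\cE / K_\cE$ is surjective; it is also injective because each generator of $K_\cE$ uses a distinct $E_{i+1}$ (for $i \geq 1$) with coefficient $+1$, so no non-zero combination of these generators lies in $\bZ E_0 + \bZ E_1$. Hence $\bZ\cE/K_\cE \cong \bZ^2$ is torsion-free and $K_\cE = \ker \nu$ for any fan representation $\nu$, since such a $\nu$ factors through this rank-two quotient to a surjection onto $\bZ^2$.

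For part (2), I would induct on the number of blowups needed to reach $\cE_\alpha$ from the seed. A single blowup $Bl_i\cE$ inserts a new curve $F$ of weight $1$, producing the relation $E_i - F + E_{i+1} = 0$; uniqueness forces $\tilde\nu(F) = \nu(E_i) + \nu(E_{i+1})$. For existence, one verifies that the two modified relations at $E_i$ and $E_{i+1}$ (now carrying weights $m_i + 1$ and $m_{i+1} + 1$) differ from the original seed relations only by a multiple of the new relation $E_i - F + E_{i+1}$, so they impose no additional constraint on $\tilde\nu$. Axiom (3) of Definition~\ref{def:HJspectrum} guarantees that performing these single blowups in different orders gives the same result, so the inductive extension is well-defined on all of $\cE_*$.

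For part (3), the key observation is that the relation $\nu(E_{i+1}) = m_i \nu(E_i) - \nu(E_{i-1})$ has transition of determinant $\pm 1$, so the hypothesis that \emph{some} adjacent pair is a $\bZ$-basis propagates along the chain $\cE$ on either side of the gap, and also under blowup since $\det(a, a+b) = \det(a,b)$. The set of primitives obtained by iteratively inserting mediants $a+b$ between a unimodular adjacent pair $(a,b)$ is precisely the set of all primitive vectors in the cone $\bR_{\geq 0} a + \bR_{\geq 0} b$, a standard Stern--Brocot fact. Applying this to each adjacent seed pair on either side of the gap identifies $\tilde\nu(\bfE)$ with the set of primitives in the two cones $\bR_{\geq 0}\nu(E_0) + \bR_{\geq 0}\nu(E_i)$ and $\bR_{\geq 0}\nu(E_j) + \bR_{\geq 0}\nu(E_{r+1})$.

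For part (4), I would prove the stronger statement that $\bZ\cE_* / K \cong \bZ^2$, where $K := \sum_{\cE' \in \cE_*} K_{\cE'}$, the isomorphism being induced by $\tilde\nu$. The blowup analysis of part (2) shows that modulo $K$ each newly introduced curve $F$ is equal to the sum of its two neighbours at the time of its creation, so by induction every class in $\bZ\cE_*/K$ is represented in $\bZ\cE/K_\cE \cong \bZ^2$; surjectivity of $\tilde\nu$ then yields the isomorphism. An $\cE_*$-compatible $\delta$ is, tautologically, a homomorphism $\bZ\cE_* \to A$ vanishing on $K$, hence descends uniquely to $\bar\delta \colon \bZ^2 \to A$ as required. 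The most delicate step is the Stern--Brocot enumeration in (3): one must check not just that all $\tilde\nu$-images are primitive in the target cones, but that \emph{every} primitive appears. This is standard, but requires a careful induction on lattice height (say, $|\det(\nu(E_0), v)|$) to argue that any primitive not already in $\tilde\nu(\bfE)$ must lie strictly between two adjacent images, contradicting the Stern--Brocot property.
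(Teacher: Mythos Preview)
Your proposal is correct and follows essentially the same route as the paper's proof, which is very terse: the paper dismisses (1) as ``an elementary computation'', proves (2) by the same one-blowup induction you describe (forcing $\tilde\nu(E) = \tilde\nu(E_-) + \tilde\nu(E_+)$ from the weight-$1$ relation and then verifying the altered relations at $E_\pm$), says (3) is ``well-known from toric geometry and \ldots\ Stern--Brocot theory'', and says (4) ``follows from (1) and (2)''. Your write-up simply supplies the details the paper omits; in particular, your argument for (4) via $\bZ\cE_*/K \cong \bZ^2$ is exactly the content of the paper's appeal to (1) and (2).
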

\begin{proof}
Part (1) is an elementary computation. For part (2), suppose we have extended $\nu$ to a fan representation $\tilde{\nu}$ of 
$$ \cE' \colon 
\xymatrix{ 
\cdots \ar@{-}[r] & E'_- \ar@{-}[r] & E_- \ar@{-}[r] & 
E_+ \ar@{-}[r] & E'_+ \ar@{-}[r] & \cdots 
} $$
Consider the blowup $Bl\cE'$ of $\cE'$ at $E_- \cap E_+$ with new curve $E$. Then the only possible extension to a fan representation of $Bl \cE'$ must have $\tilde{\nu} (E) = \tilde{\nu}(E_-) + \tilde{\nu}(E_+)$ since $m_{Bl \cE'}(E) = 1$. This does indeed define a fan representation of $Bl \cE'$ since 
$$ m_{Bl \cE'}(E_{\pm}) \tilde{\nu}(E_{\pm}) = 
(m_{\cE'}(E_{\pm}) +1) \tilde{\nu}(E_{\pm})  = 
\tilde{\nu}(E'_{\pm}) + \tilde{\nu}(E_{\mp}) + \tilde{\nu}(E_{\pm}) = 
\tilde{\nu}(E'_{\pm}) +\tilde{\nu}(E)
$$
and other relations are unchanged. Part~(2) now follows by induction and the axioms of an HJ-spectrum. 

Part~(3) is well-known from toric geometry and is an elementary consequence of  Stern-Brocot theory whilst (4) follows from (1) and (2). 
\end{proof}

\begin{definition} \label{def:detHJstring}
Let $\cE$ be an HJ-string with weights $m_1, \ldots, m_r$. We define its {\em determinant} to be $| \cE | = \det(A)$ where $A$ is the tri-diagonal matrix 
$$
A = 
\begin{pmatrix}
m_1 & -1 & 0 & \ldots & 0 \\
-1 & m_2 & -1  & \ddots & \vdots \\
0 & -1 & \ddots& \ddots & 0 \\
\vdots & \ddots & \ddots &\ddots & -1 \\
0 & \ldots & 0& -1& m_r
\end{pmatrix}
$$
It will also be convenient to denote this determinant by $\det(m_1,\ldots, m_r)$. 
\end{definition}

\begin{example} \label{eg:HJsing}
{Hirzebruch-Jung singularities}
Let $R$ be a Hirzebruch-Jung singularity by which we will mean the following: $R$ is a two-dimensional excellent normal Hensel local ring with say, residue field $\kappa$ and its minimal resolution $f \colon Y \to \Spec R$ has the following properties. 
\begin{enumerate}
    \item The exceptional curves $E_1, \ldots, E_r$ are isomorphic to $\bP^1_{\kappa}$,
    \item For $i = 1, \ldots, r$, $E_i$ intersects $E_{i+1}$ in a single $\kappa$-rational point and there are no other intersections between exceptional curves.
\end{enumerate}
Let $E_0 \subset Y$ be an irreducible curve intersecting $E_1 - E_2$ in single $\kappa$-rational point and $E_{r+1}$ an irreducible curve intersecting $E_r - E_{r-1}$ similarly. Then $\cE \colon E_0 - \ldots - E_{r+1}$ represents an HJ-string when equipped with negative self-intersection numbers $m_i = -E_i^2$. From toric geometry~\cite[\S 2.6, p.46-7]{Ful}, we know there is a fan representation $\nu \colon \bZ \cE \to \bZ^2$ such that $\nu(E_0) = (0,1), \nu(E_1) = (1,0)$ and $\nu(E_{r+1}) = (m,-k)$ where $m>k$ are relatively prime positive integers and the $m_i$ are those appearing in the continued fraction expansion for $\frac{m}{k}$ using - signs. In particular, $m$ is the determinant of $\cE$. 

If we now start blowing up nodes of $\cup E_i$ and repeating this procedure ad infinitum, we generate an HJ-spectrum with negatives of self-intersection numbers as weights. 
\end{example}

We now use our results above to examine log discrepancies for exceptional curves appearing in the example of Hirzebruch-Jung singularities. 
For us, it will be more convenient to shift log discrepancies by one.
\begin{definition}
If the log discrepancy of an exceptional curve $E$ over a log surface is $a_E$, we define its {\em $\delta$-discrepancy} to be $\delta_E = a_E + 1$. 
\end{definition}

Below we continue the notations of Example~\ref{eg:HJsing} and Proposition~\ref{prop:fancalculus}. 
For toric varietes the the $\delta$-discrepancy is linear on the lattice $N$ of one parameter subgroups as suggested in~\cite[\S 11.4]{CLS}. 
We show this is the case in our situation as well.
\begin{proposition}  \label{prop:deltaForHJ}
Let $R$ be a Hirzebruch-Jung singularity with minimal resolution $f \colon Y \to \Spec R$ and associated HJ-string $\cE \colon E_0 - \ldots - E_{r+1}$. Let $\cE_*$ be the associated HJ-spectrum as in Example~\ref{eg:HJsing} and $\nu\colon  \bZ \cE_* \to \bZ^2$ the associated fan representation. Let $n_0,n_{r+1}$ be positive integers and $\Delta = (1 - \frac{1}{n_0})f(E_0) + (1-\frac{1}{n_{r+1}})f(E_{r+1})$. If we define $\delta \colon \mathbf{E} \to \bQ$ by $\delta(E_0) = \frac{1}{n_0}, \delta(E_{r+1}) = \frac{1}{n_{r+1}}$ and $\delta(E) = $ the $\delta$-discrepancy over $(\Spec R, \Delta)$ of the exceptional $E$ in $\cE_*$, then $\delta$ is $\cE_*$-compatible so factors as $\bar{\delta} \nu$ where $\bar{\delta}(0,1) = \frac{1}{n_0}, \bar{\delta}(m,-k) = \frac{1}{n_{r+1}}$ for $m = |\cE|$ and $k$ as in Example~\ref{eg:HJsing}.
\end{proposition}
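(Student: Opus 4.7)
The plan is to verify that $\delta$ satisfies the $\cE_*$-compatibility condition, then invoke Proposition~\ref{prop:fancalculus}(4) to obtain a factorisation $\delta = \bar\delta\,\tilde{\nu}|_{\mathbf{E}}$ through a homomorphism $\bar\delta\colon \bZ^2 \to \bQ$. Once compatibility is in hand, the asserted values $\bar\delta(0,1) = 1/n_0$ and $\bar\delta(m,-k) = 1/n_{r+1}$ are immediate, since $\nu(E_0) = (0,1)$ and $\nu(E_{r+1}) = (m,-k)$ by Example~\ref{eg:HJsing}.

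To check compatibility on an HJ-string $\cE' \colon E_0 - E'_1 - \cdots - E'_l - E_{r+1}$ in $\cE_*$, let $g\colon Y' \to \Spec R$ be a regular resolution dominating the minimal resolution $Y$ on which $\cE'$ lives. The interior curves $E'_1,\ldots,E'_l$ are $g$-exceptional, whereas $E'_0 = E_0$ and $E'_{l+1} = E_{r+1}$ are not, and the strict transform of $\Delta$ on $Y'$ is $(1 - 1/n_0)\,E_0 + (1 - 1/n_{r+1})\,E_{r+1}$. For each interior $E'_i$, I would intersect the defining identity
$$K_{Y'} + g^{-1}_*\Delta \;=\; g^*(K_{\Spec R} + \Delta) \;+\; \sum_{j=1}^{l} a_{E'_j}\,E'_j$$
with $E'_i$. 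The pullback term vanishes since $E'_i$ is $g$-exceptional, adjunction on $E'_i \simeq \bP^1_{\kappa}$ meeting each chain-neighbour in a single $\kappa$-rational point yields $K_{Y'}\cdot E'_i = -2 + m_{\cE'}(E'_i)$, and on the right-hand side only the three chain neighbours of $E'_i$ contribute.

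Substituting $a_{E'_j} = \delta(E'_j) - 1$, the adjunction constants cancel against the discrepancy shift and the identity collapses to
$$\delta(E'_{i-1}) - m_{\cE'}(E'_i)\,\delta(E'_i) + \delta(E'_{i+1}) \;=\; 0,$$
which is precisely the compatibility relation at $E'_i$. The subtle point is the end behaviour: when $i = 1$ the term $a_{E'_0}$ is absent from the right-hand side, but $E_0$ contributes $(1 - 1/n_0)\,E_0\cdot E'_1 = 1 - 1/n_0$ to $g^{-1}_*\Delta\cdot E'_1$ on the left-hand side; a direct check shows that this boundary contribution is exactly what is needed so that the same recurrence holds with $\delta(E'_0) = 1/n_0$ playing the role of the missing term. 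A symmetric argument handles $i = l$, and both corrections appear simultaneously when $l = 1$.

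The main obstacle is precisely this end-of-chain bookkeeping: confirming that the stipulated values $\delta(E_0) = 1/n_0$ and $\delta(E_{r+1}) = 1/n_{r+1}$ interface correctly with the log-discrepancy recurrence at the two ends of each $\cE'$. Once that is verified the rest of the argument is adjunction on $\bP^1_{\kappa}$ combined with the linear algebra supplied by Proposition~\ref{prop:fancalculus}.
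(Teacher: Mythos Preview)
Your proposal is correct and follows essentially the same approach as the paper: you verify $\cE_*$-compatibility by intersecting the discrepancy identity with each exceptional $E'_i$, use adjunction on $E'_i \simeq \bP^1_\kappa$ to compute $K_{Y'}\cdot E'_i$, treat the boundary cases $i=1$ and $i=l$ separately where the term $(1-1/n_0)$ or $(1-1/n_{r+1})$ from $g^{-1}_*\Delta$ supplies exactly the correction needed for the recurrence to hold with $\delta(E_0)=1/n_0$, $\delta(E_{r+1})=1/n_{r+1}$, and then appeal to Proposition~\ref{prop:fancalculus}(4). This is exactly the paper's argument, with slightly more attention paid to spelling out the endpoint bookkeeping.
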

\begin{proof}
Consider an $HJ$-string $\cE' \colon E'_0 - \ldots E'_{l+1}$ in $\cE_*$ and the corresponding proper birational morphism $f' \colon Y' \to \Spec R$ and let $m'_i$ be the corresponding weights. Write 
$$
K_{Y'} + f^{\prime -1}_* \Delta = f^* \Delta + \sum_{i=1}^l a_i E'_i.
$$
If we take the intersection product of both sides with $E'_i$ and add $E'^2_i$ we obtain, using the adjunction formula
$$
-2 + f^{\prime -1}_* \Delta.E'_i = 
a_{i-1} - m'_i (a_i + 1) + a_{i+1}.$$
If $2 \leq i \leq l-1$ then this becomes
$$
 0 = (a_{i-1}+1) - m'_i (a_i + 1) + (a_{i+1}+1)
$$
as desired, whilst when $i=1$ we get
$$
0 = \frac{1}{n_0}- m'_1 (a_1 + 1) + (a_2+1).
$$
Similarly, 
$$
0 = (a_{l-1} + 1) - m'_l(a_l + 1) + \frac{1}{n_{l+1}}
$$
and $\cE_*$-compatbility is proved. Proposition~\ref{prop:fancalculus} now completes the proof. 
\end{proof}
The following corollary shows that all exceptional curves with non-positive log discrepancy can be obtained by successive blow ups at nodes.
\begin{corollary}  \label{cor:HJspectrumsuffices}
With notation as in the Proposition~\ref{prop:deltaForHJ}, any exceptional curve over $(\Spec R,\Delta)$ with non-positive log discrepancy lies in the set of exceptional curves $\bf E$ of the associated HJ-spectrum.
\end{corollary}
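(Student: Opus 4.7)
The plan is to apply Proposition~\ref{prop:ExistGoodLogRes} after exhibiting a good log resolution of $(\Spec R,\Delta)$ all of whose exceptional curves lie in $\mathbf{E}$. First I would check that $(\Spec R,\Delta)$ is klt. By Proposition~\ref{prop:deltaForHJ}, the log discrepancy of any $E\in\mathbf{E}$ equals $\bar\delta(\tilde\nu(E))-1$. Writing $\tilde\nu(E)=a(0,1)+b(m,-k)$ with $a,b\geq 0$, the linearity of $\bar\delta$ gives $\bar\delta(\tilde\nu(E))=a/n_0+b/n_{r+1}>0$ whenever $(a,b)\neq(0,0)$, so every log discrepancy strictly exceeds $-1$.

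Next I would build a good log resolution $\tilde f\colon\tilde Y\to\Spec R$ by iteratively blowing up nodes, starting from the minimal resolution $f\colon Y\to\Spec R$ of Example~\ref{eg:HJsing}. Writing $K_Y-f^*(K_{\Spec R}+\Delta)=B-A$ with $A,B$ effective, the coefficient of any curve $E$ in the seed HJ-string $E_0-\cdots-E_{r+1}$ is $\bar\delta(\tilde\nu(E))-1$; hence $\Supp A$ consists of the boundary curves $E_0,E_{r+1}$ (when $n_0,n_{r+1}>1$) together with those exceptional $E_i$ satisfying $\bar\delta(\tilde\nu(E_i))<1$, all of which lie in $\mathbf{E}$. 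As long as two components of the current $A$-divisor are adjacent in the boundary chain, blow up their node. By Definition~\ref{def:HJspectrum} and Proposition~\ref{prop:fancalculus}(2), the new exceptional $E_{\text{new}}$ again lies in $\mathbf{E}$, with $\tilde\nu(E_{\text{new}})=v_1+v_2$, and its coefficient in $B-A$ equals $\bar\delta(v_1)+\bar\delta(v_2)-1$, where $v_1,v_2$ are the $\tilde\nu$-images of the two $A$-neighbours.

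The main obstacle is showing termination. The key point is that the primitive lattice vectors in the cone $\mathbb{R}_{\geq 0}(0,1)+\mathbb{R}_{\geq 0}(m,-k)$ with $\bar\delta<1$ are finite in number: writing $v=a(0,1)+b(m,-k)$ with $a,b\geq 0$, the condition $a/n_0+b/n_{r+1}<1$ cuts out a bounded triangular region of the $(a,b)$-plane. Hence only finitely many node-blowups can produce new exceptionals that join $A$; once these are exhausted, every subsequent node-blowup yields an exceptional lying in $B$ which separates its two $A$-neighbours. After finitely many steps $\Supp A$ becomes a disjoint union of smooth rational curves and so is regular, proving that $\tilde f$ is a good log resolution all of whose exceptional curves lie in $\mathbf{E}$. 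Applying Proposition~\ref{prop:ExistGoodLogRes} to $\tilde f$, any exceptional curve over $(\Spec R,\Delta)$ with non-positive log discrepancy must be exceptional for $\tilde f$ and therefore lies in $\mathbf{E}$.
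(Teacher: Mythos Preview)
Your proposal is correct and follows essentially the same route as the paper's proof: build a good log resolution inside the HJ-spectrum by blowing up nodes between adjacent $A$-components, use the finiteness of primitive lattice points in the cone with $\bar\delta<1$ for termination, and then invoke Proposition~\ref{prop:ExistGoodLogRes}. Your version is in fact slightly more careful than the paper's, since you explicitly verify the klt hypothesis needed to apply Proposition~\ref{prop:ExistGoodLogRes} and spell out exactly which curves lie in $\Supp A$.
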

\begin{proof}
By Proposition~\ref{prop:ExistGoodLogRes}, it suffices to show that some HJ-string $\cE$ in the HJ-spectrum gives a good log resolution. To this end, we note that the Proposition~\ref{prop:deltaForHJ} shows that the number of the lattice points $v$ in the cone spanned by $(0,1)$ and $(m,-k)$ that satisfy $\delta(v) <1$ is finite. These are the exceptional curves in the HJ-spectrum with non-positive discrepancy. We need to make sure that they are all disjoint in $\cE$. IF they are not, suppose that $E,E'$ are adjacent with non-positive discrepancy. We may blow up $E \cap E'$. Eventually, this separates all the non-positive discrepancy exceptionals that can arise, because there are only finitely many of them.  
\end{proof}
Let $G = \textup{Gal}(\bar{\kappa}/\kappa),$ Note that the curves $E$ in the $HJ$-spectrum are either exceptional or Hensel local. Recall from Section~\ref{sec:2ndobstruction}, that \'etale covers of such curves correspond to field extensions of $\kappa$ so are classified by $H^1(G, \bQ/\bZ)$. 
\begin{proposition}  \label{prop:ramForHJ}
Continuing the notation in Proposition~\ref{prop:deltaForHJ}, suppose $\beta \in \textup{Br}\, (K(R))'$ is ramified on $f(E_0), f(E_{r+1})$ only, and there is no secondary ramification, The function $z\colon \mathbf{E} \to H^1(G, \bQ/\bZ)$ which sends $E$ to the ramification of $\beta$ along $E$ is $\cE_*$-compatible. In particular, $z=\bar{z} \nu$ where $\bar{z} \colon \bZ^2 \to H^1(G, \bQ/ \bZ)$ is the homomorphism which sends $\bar{z}(0,1)$ to the ramification along $E_0$ and $\bar{z}(m,-k)$ to the ramification along $E_{r+1}$.  
\end{proposition}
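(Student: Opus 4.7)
The plan is to derive the $\cE_*$-compatibility of $z$ from the vanishing of the secondary obstruction in the Artin--Mumford--Saltman sequence (Theorem~\ref{thm:ArtinMumford}), applied separately to each rational resolution in the HJ-spectrum, and then to invoke Proposition~\ref{prop:fancalculus}(4) for the factorisation through $\nu$.

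Fix any $\cE' \colon E'_0 - E'_1 - \cdots - E'_{l+1}$ in $\cE_*$ (with $E'_0 = E_0$ and $E'_{l+1} = E_{r+1}$) and let $f' \colon Y' \to \Spec R$ be the corresponding proper birational morphism. Since $Y'$ arises from the minimal resolution $Y$ by iterated blowups of $\kappa$-rational regular points, it is a rational resolution in the sense of Section~\ref{sec:2ndobstruction}, all of its exceptional curves are $\bP^1_\kappa$'s with residue field $\kappa$ at every closed point, and the only curves of $Y'$ on which $\beta$ can ramify are $E'_0, \ldots, E'_{l+1}$. The first task is to verify that no secondary ramification develops on the interior $E'_i$, so that $z(E'_i) \in H^1(G, \bQ/\bZ) \subseteq H^1(K(E'_i), \bQ/\bZ)$. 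Away from the two end nodes of $E'_i$, the primary obstruction $r(a(\beta)) = 0$ forces the secondary ramification of $z(E'_i)$ to vanish pointwise; at the two end nodes, cancellation of secondary ramifications combined with sum-of-residues reciprocity on $E'_i \cong \bP^1_\kappa$ gives a recurrence along the string that, anchored by the vanishing hypothesis at $E_0$ and $E_{r+1}$, propagates the vanishing inward.

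With this established, $a(\beta)$ lies in $\bigoplus_C H^2_{et}(Y', i_{C*}\bZ)'$ and Theorem~\ref{thm:ArtinMumford} gives $\partial(a(\beta)) = 0$ in $H^3_{et}(Y', \bGm)'$. Because $H_i = G$ for every exceptional curve $E'_i$, we are in case~(1) of Proposition~\ref{prop:H2GratRes}, and the $E'_i$-component of $\partial$ is simply multiplication of each $z(C)$ by the intersection number $C.E'_i$. Using the string intersection pattern ($E'_{i-1}.E'_i = E'_{i+1}.E'_i = 1$, $E'_i.E'_i = -m'_i$, and $E_0, E_{r+1}$ meeting only $E'_1$ and $E'_l$ respectively among the interior curves), the vanishing of the $E'_i$-component becomes
$$
z(E'_{i-1}) - m'_i\, z(E'_i) + z(E'_{i+1}) = 0,
$$
which is precisely the $\cE_*$-compatibility relation on $\cE'$.

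As $\cE'$ was arbitrary, $z$ is $\cE_*$-compatible. Proposition~\ref{prop:fancalculus}(4) then supplies a unique homomorphism $\bar z \colon \bZ^2 \to H^1(G, \bQ/\bZ)$ with $z = \bar z \nu$, and the identifications $\bar z(0,1) = z(E_0)$ and $\bar z(m, -k) = z(E_{r+1})$ follow immediately from $\nu(E_0) = (0,1)$ and $\nu(E_{r+1}) = (m, -k)$ recorded in Example~\ref{eg:HJsing}. The main obstacle, I anticipate, is the verification that no secondary ramification develops on the interior exceptional curves: the primary obstruction alone only controls the situation away from the string nodes, so the chain-reciprocity argument, while elementary, must be unrolled with care.
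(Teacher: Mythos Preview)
Your approach is essentially the same as the paper's: verify $\cE_*$-compatibility by identifying the tridiagonal relations with the vanishing of the secondary obstruction via Proposition~\ref{prop:H2GratRes}(1), then invoke Proposition~\ref{prop:fancalculus}(4). The paper's proof is a two-line sketch that simply points to these two inputs and the parallel with Proposition~\ref{prop:deltaForHJ}.

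The one place you go further is in explicitly arguing that no secondary ramification develops on the interior exceptional curves, so that $a(\beta)$ genuinely lands in $\bigoplus_C H^2_{et}(Y', i_{C*}\bZ)'$ and Proposition~\ref{prop:H2GratRes} applies. The paper passes over this in silence, presumably treating it as routine. Your chain argument (primary obstruction kills secondary ramification away from nodes; cancellation at nodes plus sum-of-residues reciprocity on each $\bP^1_\kappa$ propagates vanishing inward from the anchored ends $E_0, E_{r+1}$) is correct and is indeed the natural way to fill this in. It is worth noting that reciprocity here is just the statement that for $\bP^1_\kappa$ the residue map $H^1(K(\bP^1_\kappa),\bQ/\bZ)' \to \bigoplus_q \mu^{-1}$ lands in the kernel of the sum map, which is standard. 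So your caution about this step is warranted but the argument goes through without difficulty.
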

\begin{proof}
It suffices to verify the equations defining $\cE_*$-compatibility, much as in the proof of Proposition~\ref{prop:deltaForHJ}. By Proposition~\ref{prop:H2GratRes}(1), these equations correspond precisely to the vanishing of the secondary obstruction in the Artin-Mumford-Saltman sequence. 
\end{proof}

\section{Terminal Brauer log pairs for prime index $p>5$}  \label{sec:terminalprimeindex}

Fix a prime $p$. In this section, we wish to classify terminal localised Brauer classes $(\beta,g_C)$ on a two-dimensional normal excellent noetherian Hensel local domain $R$ with finite residue field $\kappa$ in the case of {\em prime index}. This will mean that $\beta$ has index $p$ and all the ramification indices for curves on $\Spec R$ divide $p$. Our classification is complete when $p >5$. 

This is our main theorem. 
Let $x$ be the closed point of $\Spec R$
\begin{theorem} \label{thm:terminalPrimeIndex}
Let $(\beta, g_C)$ be a localised Brauer class of prime index $p>5$ on $R$. Then one of the following occurs.
\begin{enumerate}
    \item $R$ is regular, $\beta$ is unramified and at most one $g_C =p$ and that ramification curve $C$ has multiplicity one at $x$.
    \item $R$ is regular, $\beta$ ramifies on a curve $C$ with multiplicity one at $x$ and there is at most one other ramification curve which crosses $C$ normally.
    \item $R$ is regular and the ramification curves on $\Spec R$ form a normal crossing. Furthermore, $\beta$ has non-trivial secondary ramification. In particular, all $g_C = 1$.
    \item $R$ is a Hirzebruch-Jung singularity whose HJ-string $\cE \colon E_0 - \ldots - E_{r+1}$ has determinant $p$. The Brauer class $\beta$ is non-trivial but unramified along curves on $\Spec R$. There is at most one curve $C$ on $\Spec R$ with $g_C >0$ which corresponds to $E_0$ (or symmetrically $E_{r+1}$). Finally, $\beta$ is ramified on the exceptional curves $E_1, \ldots, E_r$. 
\end{enumerate}
Furthermore, any such localised Brauer class is terminal whenever $p>2$
\end{theorem}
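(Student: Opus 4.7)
The plan is to combine the fan calculus from Section~\ref{sec:fan} with Corollary~\ref{cor:ramexc} on ramification under blowups, reducing the whole question to linear inequalities on lattice points in a fan. Because $\beta$ has prime index $p$, every coefficient of $\Delta_{X,\beta}$ is $0$ or $1-1/p$, so terminality $b_E = a_E + 1 - 1/n_E > 0$ with $n_E \in \{1,p\}$ yields $a_E > -1$, making $(R,\Delta)$ klt. Hence $R$ itself is log terminal, so either regular or of Hirzebruch-Jung type with a chain of $\bP^1_\kappa$-exceptionals on its minimal resolution.

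For the classification (forward direction), I would treat the regular case by blowing up $x$ and applying Corollary~\ref{cor:ramexc}: the ramification on the exceptional $E$ is $\sum m_i\zeta_i$ and $a_E = 1 - \sum m_i(1-1/p)$, immediately bounding $\sum m_i$. Further blowups at nodes of $\mathrm{Supp}(\Delta)$ produce HJ-spectra, for which Propositions~\ref{prop:deltaForHJ} and~\ref{prop:ramForHJ} express $\bar\delta$ and $\bar z$ as linear maps $\bZ^2 \to \bQ$ and $\bZ^2 \to H^1(G,\bQ/\bZ)[p]$, and a finite case analysis on how many branches of $\mathrm{Supp}(\Delta)$ meet at $x$ and whether secondary ramification occurs yields cases (1)--(3). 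In the singular case, let $\cE : E_0 - \cdots - E_{r+1}$ augment the minimal resolution by strict transforms of any ramification curves meeting the exceptional locus; Proposition~\ref{prop:ramForHJ} says $\bar z$ factors through $\bZ^2 / \langle\nu(E_0),\nu(E_{r+1})\rangle \cong \bZ/|\cE|$, and non-triviality of $\beta$ of order $p$ forces $p \mid |\cE|$. The terminal inequality $\bar\delta(\nu(E_i)) > 1/p$ on every interior exceptional then pins $|\cE| = p$ (the prime bound $p > 5$ is used decisively here to exclude larger determinants), producing case (4).

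For the converse, I would for each of the four candidate configurations set up the HJ-spectrum containing, by Corollary~\ref{cor:HJspectrumsuffices}, every exceptional curve of potentially non-positive b-discrepancy, and verify the inequality $\bar\delta(v) > 1/n_v$ at every primitive $v$ in the relevant cone using the Stern-Brocot parameterisation of primitive vectors (Proposition~\ref{prop:fancalculus}(3)). These reduce to elementary numerical inequalities, and $p > 2$ suffices because the slope of $\bar\delta$ along the interior of the cone is comfortably above $1/p$.

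The main obstacle is the finite case analysis in the forward direction, particularly ruling out Hirzebruch-Jung determinants other than $p$ and configurations of three or more ramification branches at a regular point. The arithmetic becomes tight precisely at $p = 5$, which is why the classification requires $p > 5$; for small primes additional terminal configurations arise. Handling the interplay between secondary ramification and the linearity of $\bar z$ over a finite group requires care, and is where the most delicate part of the fan-combinatorial argument lives.
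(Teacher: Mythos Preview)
Your outline captures the type~A analysis well, but it contains a genuine gap at the very first reduction step. You write that since $(R,\Delta)$ is log terminal, $R$ is ``either regular or of Hirzebruch-Jung type with a chain of $\bP^1_\kappa$-exceptionals on its minimal resolution.'' Over an algebraically closed field this is already false (there are the $D$- and $E$-type quotient singularities), and in the arithmetic setting the classification is more subtle still: the exceptional curves need not all be defined over the residue field $\kappa$, and the dual graph can be of type $B$, $C$, or $D$ in the sense of \cite[Corollary~3.44]{Kol13}, where double edges record quadratic residue field extensions on adjacent exceptionals. The paper devotes three separate lemmas (Lemmas~\ref{lem:typeDnosoln}, \ref{lem:typeCnosoln}, \ref{lem:typeBnosoln}) to eliminating these, and the arguments are not routine: one must build a suitable HJ-spectrum with a modified seed (shifting a weight by $1$ to absorb the fork or the double edge), check that both $\delta$-discrepancy and ramification remain $\cE_*$-compatible using restriction/corestriction along the Galois subgroups via Proposition~\ref{prop:H2GratRes}(2),(3), and then exhibit a bad primitive vector. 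Your proposal simply does not see these cases.

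Relatedly, you have misplaced the role of the hypothesis $p>5$. It is \emph{not} used to ``exclude larger determinants'' in the Hirzebruch-Jung analysis; the determinant argument (Lemma~\ref{lem:typeAcase}) works for any odd prime. The condition $p>5$ is what guarantees, via \cite[Corollary~3.44]{Kol13}, that the log terminal classification yields only the four Dynkin types $A$--$D$; for $p\le 5$ the list of log terminal possibilities is much longer and the paper explicitly declines to analyse them. So your sketch both omits a substantial portion of the actual proof and attributes the key hypothesis to the wrong step.
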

The proof of this theorem will take up the whole section. Let $G\simeq \hat{Z}$ be the absolute Galois group of the residue field as usual. We note the following curious phenomena which has no counterpart in the geometric setting in dimension two as in \cite{CI05}, but note that similar phenomena occur in
higher dimensions \cite{11authors}.

\begin{example}  \label{eg:nonTermBlowup}
Let $R$ be regular, and $\beta \in \textup{Br}K(R)'$ be a $p$-torsion element ramified on single curve of $C$ of multiplicity 1 at the closed point. The theorem shows it is terminal. However, if you blowup up the closed point, then Corollary~\ref{cor:ramexc} shows that the exceptional is also ramified, but there is no secondary ramification. Hence, on the blowup, $\beta$ is no longer terminal. 
\end{example}
\begin{question} Can the classification of Theorem \ref{thm:terminalPrimeIndex} be extended to include cases $p=2,3,5$?
\end{question}
\begin{proof}
We now begin to prove the theorem. We will work as far as possible with a general prime, stating clearly when we need $p>5$. Let $\Delta$ be the associated log boundary of $(\beta, g_C)$. We know from Remark~\ref{rem:logIsLogTerminal} that $(\Spec R, \Delta)$ is log terminal so we can use the classification in \cite[Section~3.35 and Corollary~3.44]{Kol13}. When $p>5$, the singularity $R$ has a minimal resolution whose dual graph is Dynkin of type $A$, $B$, $C$ or $D$ (as explained below). We work through each of these cases for general $p$. When $p\leq 5$, the classification of log terminal log surfaces is a lot messier so, except in Section~\ref{sec:smoothX} where $R$ is regular, we have not analyzed the extra possibilities that can arise. 

Our notation in the Dynkin diagrams below will be as follows. A vertex marked $\bullet$ will denote a non-exceptional possible ramification curve, a $*$ will denote an exceptional curve with negative self-intersection number $* \geq 2$ whilst the number 2, means an exceptional curve with self-intersection -2. Suppose there is a plain edge $-$ then both of the vertices incident on it correspond to an exceptionals $E,E' \simeq \bP^1_{\kappa'}$. Then the curves corresponding to the ends of the edge intersect in a $\kappa'$-rational point. If there is a directed double edge $* \Rightarrow *$, we may suppose the exceptional on the left is $\simeq \bP^1_{\kappa_1}$ and the one on the right is $\simeq \bP^1_{\kappa_2}$. Then $\kappa_1/\kappa_2$ is a degree two extension and they intersect in a $\kappa_1$-rational point. 

Type A corresponds to 
\begin{equation}
\xymatrix{
\bullet 
\ar@{-}[r] & \ast
\ar@{-}[r] & \cdots 
\ar@{-}[r] & \ast \ar@{-}[r] & \bullet
}    
\end{equation}
Type B is

\begin{equation} \label{eq:typeBdiagram}
\xymatrix{
\bullet 
\ar@{-}[r] & \ast
\ar@{-}[r] & \cdots 
\ar@{-}[r] & \ast \ar@{=>}[r] & \ast
}    
\end{equation}
Type C is
\begin{equation} \label{eq:typeCdiagram}
\xymatrix{
2 \ar@{=>}[r]
\ar@{-}[r] & \ast
\ar@{-}[r] & \cdots 
\ar@{-}[r] & \ast \ar@{-}[r] &  \bullet 
}    
\end{equation}
Type D is
\begin{equation} \label{eq:typeDdiagram}
\xymatrix{
2 \ar@{-}[dr] & & & & \\
 & \ast
\ar@{-}[r] & \cdots 
\ar@{-}[r] & \ast \ar@{-}[r] &  \bullet\\
2 \ar@{-}[ur] & & & & 
}    
\end{equation}

Consider first the type $A$ case which just means that $R$ is a Hirzebruch-Jung singularity with HJ-string $\cE \colon E_0 \ldots - E_{r+1}$ as in Example~\ref{eg:HJsing}. We use the notation there, so in partiuclar, $f \colon Y \to \Spec R$ denotes the minimal resolution. We also use the notation in Proposition~\ref{prop:deltaForHJ} detailing the $\delta$-discrepancies of all relevant exceptional curves $E$, namely those in the HJ-spectrum $\cE_*$, as well as Proposition~\ref{prop:ramForHJ} which gives the corresponding ramification when there is no secondary ramification for $\beta$. We include the degenerate case $r=0$ which is when $R$ is regular. 

We outline first, the general approach in this latter case which is central to the entire classification. Let $\mathbf{E}$ be the set of curves in $\cE_*$ and $\nu \colon \bZ \mathbf{E} \to \bZ^2$ be the associated fan representation as described in Example~\ref{eg:HJsing}. By Corollary~\ref{cor:HJspectrumsuffices}, $(\beta, g_C)$ is terminal if and only if the b-discrepancy of every $E \in \mathbf{E}$ has positive b-discrepancy. From Proposition~\ref{prop:fancalculus}(3), these correspond via $\nu$ precisely to the primitive vectors in the open cone $\bR_{>0}(0,1) + \bR_{>0}(m,-k)$ where $m = |\cE|$ and $\frac{m}{k}$ gives the weights of $\cE$ via continued fractions as in Example~\ref{eg:HJsing}. Suppose now that on $\Spec R$, the ramification locus of $(\beta,g)$ is contained in  $f(E_0) \cup f(E_{r+1})$. Since $\beta$ is $p$-torsion, the ramification there are given by $z_0, z_{r+1} \in H^1(G, \bZ/p) \simeq \bZ/p$. We let $e_{E}$ be the ramification index of $\beta$ at $E$ and $n_E = g_E e_E$ as in Section~\ref{sec:discrepancy}. When there is no secondary ramification,   Propositions~\ref{prop:deltaForHJ}, \ref{prop:ramForHJ}, shows that the $\delta$- and b-discrepancies are computed by composing the fan representation$\nu$ with 
\begin{align}
\bar{\delta} \colon & \bZ^2 \to \bQ\colon (0,1) \mapsto \frac{1}{n_{E_0}}, (m,-k) \mapsto \frac{1}{n_{E_{r+1}}}    \\
\bar{z} \colon &  \bZ^2 \to H^1(G, \bZ/p) \colon (0,1) \mapsto z_0, (m,-k) \mapsto z_{r+1}  
\end{align}
Note that if there is secondary ramification, we still have $\bar{\delta}$ above, but $\bar{z}$ is no longer available. Consider now $E \in \mathbf{E} - \{E_0, E_{r+1}\}$. The ramification of $\beta$ along $E$ either has order $p$, or is trivial and this latter case corresponds to the $\bar{z} = 0$ lines in $\bZ^2$. When the ramification is non-trivial, the positive b-discrepancy condition corresponds to 
\begin{equation} \label{eq:deltaWhenRam}
    \bar{\delta}\nu(E) > \frac{1}{p}
\end{equation}
When $E$ is unramified, the condition for positive b-discrepancy becomes
\begin{equation}  \label{eq:deltaWhenUnram}
    \bar{\delta}\nu(E) > 1
\end{equation}
The problem of determining when these conditions are satisfied is now reduced to a solvable one.

We first dispose of the regular case.
\begin{lemma}  \label{lem:smoothAndPrime}
Suppose that $R$ is regular and $(\beta, g_c)$ is a terminal localised Brauer class on $R$. Suppose further that the ramification locus on $\Spec R$ is (at worst) normal crossing (this is guaranteed for $p>5$ by  \cite[Corollary~3.44]{Kol13} and even $p>2$ by Theorem~\ref{thm:classifysmoothcentre} below). Then one of the cases (1), (2) or (3) in Theorem~\ref{thm:terminalPrimeIndex} must hold. Furthermore, all such localised Brauer classes are terminal. 
\end{lemma}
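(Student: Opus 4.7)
The plan is to reduce to the fan-calculus framework of Section~\ref{sec:fan} applied to the degenerate HJ-string with $r=0$. Take the seed $\cE\colon E_0 - E_1$ where the $E_i$ correspond to (a subset of) the at most two normal-crossing ramification curves through $x$, and let $\cE_*$ be the HJ-spectrum generated by iterated blowups of nodes. By the regular-case version of Corollary~\ref{cor:HJspectrumsuffices}, terminality of $(\beta,g)$ at $x$ is equivalent to strict positivity of b-discrepancy on every curve in the set $\mathbf{E}$ of curves of $\cE_*$. Via the fan representation $\nu$ with $\nu(E_0) = (0,1)$ and $\nu(E_1) = (1,0)$, the image $\nu(\mathbf{E})$ is exactly the set of primitive vectors in the open first quadrant.

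Propositions~\ref{prop:deltaForHJ} and~\ref{prop:ramForHJ} supply linear maps $\bar\delta\colon \bZ^2 \to \bQ$ and (when there is no secondary ramification) $\bar z\colon \bZ^2 \to H^1(G, \bQ/\bZ)$ whose compositions with $\nu$ recover the $\delta$-discrepancies and ramifications on $\mathbf{E}$; explicitly $\bar\delta(0,1) = 1/n_0$ and $\bar\delta(1,0) = 1/n_1$, where $n_i \in \{1, p\}$ are the ramification indices of the seed curves. The b-discrepancy of an exceptional $E$ with $\nu(E) = (a,b)$ is $\bar\delta(a,b) - (1 - 1/n_E)$, so terminality reduces to the elementary inequalities $\bar\delta(a,b) > 1$ when $E$ is unramified and $\bar\delta(a,b) > 1/p$ when $E$ is ramified, to hold for every primitive $(a,b)$ with $a,b \geq 1$.

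Next I would case-split by the number of ramification curves through $x$. When there are none, cohomological dimension and the triviality of the Brauer group of a finite field give $\textup{Br}(R)' = 0$, so $\beta$ is locally trivial and all exceptionals in $\mathbf{E}$ are unramified; the constraint $\bar\delta(a,b) > 1$ then forces at most one $g_C = p$, with that $C$ of multiplicity one at $x$, which is case~(1). When there is exactly one ramification curve $C$ through $x$ (so $n_0 = p$, $n_1 = 1$), the inequality $a/p + b > 1$ is automatic for $a,b\geq 1$, while Corollary~\ref{cor:ramexc} rules out higher multiplicity of $C$ at $x$, giving case~(2). When there are two ramification curves through $x$ with $n_0 = n_1 = p$, the linearity of $\bar z$ shows that, in the absence of secondary ramification, the unramified exceptionals lie along a single line through the origin and include primitive vectors $(a,b)$ with $a+b \leq p$; these violate $(a+b)/p > 1$. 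Nontrivial secondary ramification is therefore necessary, placing us in case~(3). The converses are obtained by the same b-discrepancy computation, once one knows the ramification of every new exceptional.

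The main obstacle is case~(3): with secondary ramification present, Proposition~\ref{prop:ramForHJ} does not apply, so $\bar z$ is unavailable for directly computing ramification of exceptionals created under blowup. One must argue via the Artin--Mumford--Saltman complex of Theorem~\ref{thm:ArtinMumford}, together with the description of the secondary obstruction map in Proposition~\ref{prop:H2GratRes}, that the presence of nontrivial secondary ramification at $x$ propagates to every sufficiently-close exceptional of $\cE_*$, forcing them all to be ramified so that the automatic inequality $\bar\delta(a,b) > 1/p$ governs terminality.
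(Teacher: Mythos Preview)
Your overall framework matches the paper's: set up the $r=0$ HJ-spectrum, use the fan representation with $\nu(E_0)=(0,1)$, $\nu(E_1)=(1,0)$, and reduce terminality to the inequalities $\bar\delta>1$ (unramified) or $\bar\delta>1/p$ (ramified) on primitive first-quadrant lattice points. But the two-curve case is mishandled and this is a genuine gap.

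In the two-curve case you must distinguish whether each ramification index $n_i=p$ comes from $\beta$ (so $z_i\neq 0$) or from $g_{E_i}=p$ (so $z_i=0$). Your assertion that the unramified locus ``includes primitive vectors $(a,b)$ with $a+b\le p$'' fails precisely when exactly one $z_i$ vanishes. If, say, $z_0=0$ and $z_1\neq 0$, then $\bar z(a,b)=az_1$ and the unramified primitive points are those with $p\mid a$; the smallest such in the open quadrant is $(p,1)$, giving $\bar\delta(p,1)=(p+1)/p>1$. This configuration \emph{is} terminal and is exactly case~(2) of Theorem~\ref{thm:terminalPrimeIndex} with the optional second ramification curve present. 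Your argument as written would wrongly exclude it and force case~(3). The paper handles this by splitting the two-curve, no-secondary-ramification situation into three sub-cases according to which of $z_0,z_1$ vanish; only the sub-cases with both nonzero, or both zero, are non-terminal.

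For case~(3), the mechanism is simpler than you suggest and does not require the secondary obstruction of Proposition~\ref{prop:H2GratRes}. What you need is vanishing of the \emph{primary} obstruction (the map $r$ in Theorem~\ref{thm:ArtinMumford}): after any sequence of nodal blowups the secondary ramification of the two adjacent ramification covers at each new node must cancel, and since the strict transforms of $E_0,E_1$ still carry nontrivial secondary ramification, every newly created exceptional is forced to be ramified. There is no ``sufficiently close'' qualifier---all exceptionals in $\cE_*$ are ramified, and then $\bar\delta(a,b)=(a+b)/p>1/p$ for every primitive $(a,b)$ in the open quadrant finishes it.
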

\begin{proof}
We carry out the proof when the normal crossing involves two rational branches since the other case where the branches are conjugate is fairly easy to eliminate. We continue the above setup for general Hirzebruch-Jung singularities but with the number of exceptional curves $r=0$. Let $\sigma$ be the open first quadrant in $\bR^2$. Suppose first that $\beta$ has non-trivial secondary ramification so $\bar{\delta}(0,1) = \bar{\delta}(1,0) = \frac{1}{p}$. Now the primary obstruction must vanish so it is easy to see that every $E \in \mathbf{E}$ is actually ramified. Now (\ref{eq:deltaWhenRam}) holds since $\bar{\delta}$ is $> \frac{1}{p}$ on  every lattice point in $\sigma$. 

We may thus suppose that there is no secondary ramification and so ramification is completely described by the map $\bar{z}$. If there are no ramification curves, or there is only one ramification curve with multiplicity 1 at the closed point of $\Spec R$, then the associated log surface is already terminal so by Remark~\ref{rem:logIsLogTerminal}, so is the localised Brauer class. We may thus suppose that there is ramification on normal crossing $E_0 \cup E_1$. Again we have $\bar{\delta}(0,1) = \bar{\delta}(1,0) = \frac{1}{p}$. However, the ramification along $E_i$ could be due to $\beta$ or $g_{E_i}$ (but not both). We go through the cases. First note that $\bar{\delta} > \frac{1}{p}$ for all lattice points in $\sigma$ so we need only check condition (\ref{eq:deltaWhenUnram}). If $\beta$ is unramified then so is every exceptional, but $(1,1)$ is primitive and $\bar{\delta}(1,1) = \frac{2}{p} \leq 1$ which is not terminal. Suppose now that $\beta$ ramifies on both $E_0, E_1$ with ramification $z_0, z_1 \neq 0$ in the $p$-torsion cyclic group $H^1(G,\bZ/p)$. There thus exists an integer $t \in [1,p)$ such that $z_0 + tz_1 = 0$. Now $(t,1)$ is a primitive vector with $\bar{\delta}(t,1) = \frac{t+1}{p} \leq 1$ which again is not terminal. The last case is where $z_0 = 0$ but $z_1 \neq 0$. Then the unramified exceptionals correspond to the lattice points $(i,j)$ where $p|i$. For such lattice points in $\sigma$ we always have $\bar{\delta} >1$ so we get the terminal localised Brauer classes of case~(2).
\end{proof}

We now consider the case of a Hirzebruch-Jung singularity which is not regular.
\begin{lemma}  \label{lem:typeAcase}
Let $R$ be a (non-regular) Hirzebruch-Jung singularity with minimal resolution $f \colon Y \to \Spec R$ and associated HJ-string $\cE$ as in Example~\ref{eg:HJsing}. If $(\beta, g_C)$ is a terminal localised Brauer class with ramification on $\Spec R$ confined to $f(E_0) \cup f(E_{r+1})$ (this occurs for $p>5$ by \cite[Corollary~3.44]{Kol13}), then we are in case~(4) of Theorem~\ref{lem:smoothAndPrime}, that is, $|\cE| = p$, and  $\beta$ is non-trivial but unramified along curves on $\Spec R$ and all $g_C = 1$ except possibly for $C = f(E_{r+1})$ (or symmetrically $f(E_0)$). Furthermore, $\beta$ ramifies on all the exceptional curves of $f$. 
\end{lemma}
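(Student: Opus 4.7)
The plan is to combine the fan calculus of Section~\ref{sec:fan} with the secondary obstruction computation of Proposition~\ref{prop:H2GratRes}. Let $\nu \colon \bZ \mathbf{E} \to \bZ^2$ be the fan representation of the HJ-spectrum generated by $\cE$, with $\nu(E_0) = (0,1)$, $\nu(E_1) = (1,0)$, and $\nu(E_{r+1}) = (m,-k)$. By Corollary~\ref{cor:HJspectrumsuffices}, terminality is equivalent to strictly positive b-discrepancy at every primitive lattice vector in the open cone $\sigma = \bR_{>0}(0,1) + \bR_{>0}(m,-k)$. Writing $n_0 = n_{f(E_0)}$, $n_{r+1} = n_{f(E_{r+1})}$ and (when meaningful) $z_0 = a_{f(E_0)}(\beta)$, $z_{r+1} = a_{f(E_{r+1})}(\beta)$, Proposition~\ref{prop:deltaForHJ} supplies the linear form
\[
\bar\delta(v_1,v_2) = \frac{v_1}{m n_{r+1}} + \frac{k v_1 + m v_2}{m n_0}
\]
computing all relevant $\delta$-discrepancies.

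First I would show $\beta$ is unramified along $\Spec R$, i.e.\ $z_0 = z_{r+1} = 0$. Suppose both $f(E_0)$ and $f(E_{r+1})$ are ramified; then $n_0, n_{r+1} \geq p$, so $\bar\delta(\nu(E_1)) = \bar\delta(1,0) \leq (k+1)/(mp) \leq 1/p$ (using $k \leq m-1$), failing terminality at $E_1$ whether $E_1$ is ramified or not. Hence at most one of $f(E_0), f(E_{r+1})$ carries ramification; in this case no secondary ramification at the closed point is possible (otherwise the primary obstruction of the Artin--Mumford--Saltman sequence would not vanish), so Proposition~\ref{prop:ramForHJ} yields a linear ramification map $\bar z \colon \bZ^2 \to \bZ/p$. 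The secondary obstruction must also vanish for $\beta$ to exist; since $E_0.E_1 = E_{r+1}.E_r = 1$ and all other intersections of $E_0, E_{r+1}$ with exceptionals are zero, Proposition~\ref{prop:H2GratRes}(2) identifies this obstruction in $(\bQ/\bZ)^r$ as the class with $z_0$ in the $E_1$-slot and $z_{r+1}$ in the $E_r$-slot (collapsing to $z_0 + z_{r+1}$ when $r=1$). Combined with the ``at most one ramified'' reduction, the obstruction vanishes only when $z_0 = z_{r+1} = 0$.

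Next I would prove $m = p$. With $\bar z(0,1) = 0 = \bar z(m,-k)$ and $a := \bar z(1,0)$, we have $ma \equiv 0 \pmod p$. If $a = 0$ then $\bar z \equiv 0$, every exceptional is unramified, and terminal demands $\bar\delta(1,0) > 1$; but $\bar\delta(1,0) \leq (1+k)/m \leq 1$, a contradiction. Hence $p \mid m$ and $\bar z(v_1,v_2) = a v_1$ with $a \neq 0$. To exclude $m = cp$ for $c \geq 2$, write $k = cq + s$ with $0 \leq s < c$; since $\gcd(k,m) = 1$ and $p \mid m$ one has $\gcd(k,c) = 1$, so $c \nmid k$. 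When $q \geq 1$, the vector $(p,-q)$ is a primitive interior point of $\sigma$ (using $1 \leq q < p$ and $\gcd(p,q) = 1$), and direct substitution gives $\bar\delta(p,-q) = 1/(c n_{r+1}) + s/(c n_0) \leq (1+s)/c \leq 1$, so this unramified vector violates terminal. When $q = 0$ (hence $k < c$), the ramified primitive $(1,0) = \nu(E_1)$ has $\bar\delta(1,0) \leq (1+k)/(cp) \leq 1/p$, again non-terminal. Thus $m = p$.

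Finally, with $m = p$ each $\nu(E_i) = (a_i,b_i)$ for $i = 1,\dots,r$ has $0 < a_i < p$ by the Stern--Brocot description, so $\bar z(\nu(E_i)) = a a_i \neq 0$ and $\beta$ ramifies along every exceptional of $f$. For the remaining $g_C$-constraint, suppose both $n_0, n_{r+1} \geq 2$. When $k \geq 2$ the primitive $(p, 1-k) \in \sigma$ is unramified with $\bar\delta(p,1-k) = 1/n_{r+1} + 1/n_0 \leq 1$; when $k = 1$ the primitive $(1,0)$ is ramified with $\bar\delta(1,0) = (1/n_{r+1} + 1/n_0)/p \leq 1/p$; in either case terminality fails. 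Therefore at most one of $n_0, n_{r+1}$ exceeds $1$, which is the asserted condition on the $g_C$'s (up to swapping $E_0 \leftrightarrow E_{r+1}$). The main technical difficulty will be the combinatorial bookkeeping to exhibit the correct primitive lattice vector in each parameter regime of Steps 2 and 3; the conceptual pivot is that the secondary obstruction of Proposition~\ref{prop:H2GratRes} rigidly annihilates all curve-ramification of $\beta$ on $\Spec R$, after which the fan-combinatorial $\bar\delta$-bounds force $m = p$.
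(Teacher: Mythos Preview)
Your overall strategy (fan calculus + explicit primitive lattice points) is the same as the paper's, and Steps~2 and~3 are essentially correct. The gap is in Step~1, specifically your use of the secondary obstruction to force $z_{r+1}=0$.

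After your first reduction you have, say, $n_0=1$ (hence $z_0=0$) and $n_{r+1}=p$, and you want to conclude $z_{r+1}=0$. You invoke Proposition~\ref{prop:H2GratRes} and assert that the secondary obstruction for $\beta$ on $Y$ lands in $(\bQ/\bZ)^r$ as the vector $(z_0,0,\ldots,0,z_{r+1})$. But the boundary map $\partial$ is applied to the \emph{entire} ramification datum of $\beta$ on $Y$, which includes the values $\zeta(E_1),\ldots,\zeta(E_r)$ along the exceptional curves, not just $z_0$ and $z_{r+1}$. The $E_i$-component of the obstruction is $\zeta(E_{i-1})-m_i\zeta(E_i)+\zeta(E_{i+1})$; its vanishing is exactly the $\cE_*$-compatibility relation (this is how Proposition~\ref{prop:ramForHJ} is proved). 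These $r$ relations in $r+2$ unknowns do not force $z_{r+1}=0$: they determine $\zeta(E_1),\ldots,\zeta(E_r)$ linearly from $z_0,z_{r+1}$ via $\bar z$, and for any value of $z_{r+1}$ there is a consistent solution. So nothing in the obstruction excludes $\beta$ ramifying on $f(E_{r+1})$.

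Consequently you have not treated the case $z_0=0$, $z_{r+1}\neq 0$ (equivalently $p\nmid m$ in the notation $\bar z(1,0)=a\neq 0$, $\bar z(m,-k)=ma$). The paper handles this case directly: with $\bar\delta(1,0)=(kp+1)/(mp)$ it first extracts $kp\geq m$ from condition~(\ref{eq:deltaWhenRam}), then exhibits a primitive unramified lattice point $(p,j_0)\in\sigma$ with $\bar\delta(p,j_0)\leq 1$ via an arithmetic-progression argument, obtaining a contradiction unless $p\mid m$. Your lattice-point machinery in Step~2 is perfectly adequate to run an analogous argument here; you just need to add this subcase rather than short-circuit it with the incorrect obstruction claim. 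Once $p\mid m$ is established in this branch, $z_{r+1}=m\bar z(1,0)=0$ follows automatically and you rejoin your Step~2.
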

\begin{proof}
Consider first the case where both $f(E_0)$ and $f(E_{r+1})$ are indeed ramified. Then our induced $\delta$-discrepancy map $\bar{\delta} \colon (0,1), (m,-k) \mapsto \frac{1}{p}$. Recall $k < m$ so 
$$
\bar{\delta}(1,0) = \frac{k}{m}\bar{\delta}(0,1) +   \frac{1}{m}\bar{\delta}(m,-k) = \frac{k+1}{mp} \leq \frac{1}{p}
.$$
Hence (\ref{eq:deltaWhenRam}) fails and the localised Brauer class is not terminal. 

Suppose now, without loss of generality, that $f(E_0)$ is unramified but $f(E_{r+1})$ is ramified. Thus $\bar{\delta} \colon (0,1) \mapsto 1, (m,-k) \mapsto \frac{1}{p}$.  Secondary ramification cannot cancel, so must be trivial and we may use the $\bar{z}$ map to determine ramification. Now (\ref{eq:deltaWhenRam}) implies 
\begin{equation} \label{eq:deltabasisTypeA}
\bar{\delta}(1,0) = \frac{k}{m}\bar{\delta}(0,1) +   \frac{1}{m}\bar{\delta}(m,-k) = \frac{kp+1}{mp} > \frac{1}{p} \quad  \implies kp \geq m
\end{equation}
since $k,m$ are relatively prime. 

The first possibility is that $\beta$ is trivial so all exceptional curves are unramified. Then from (\ref{eq:deltabasisTypeA}) we see 
$ \bar{\delta}(1,0) <1$ violating (\ref{eq:deltaWhenUnram}) so it is not terminal. Suppose now $\beta$ is non-trivial so $\bar{z}(0,1) = 0$ but $\bar{z}(1,0) \in H^1(G,\bZ/p)$ is some non-trivial $p$-torsion element. Thus the unramified exceptionals correspond to the lattice points $(i,j)$ where $p|i$. Let $\sigma = \bR_{>0} (0,1) + \bR_{>0}(m,-k)$. We consider the question of whether there exists a lattice point of the form $(p,j)$ in $\sigma$ violating (\ref{eq:deltaWhenUnram}). This last condition means
$$
1 \geq \left(\frac{kp}{m} + j\right) \bar{\delta}(0,1) + \frac{p}{m}\bar{\delta}(m,-k) = \frac{kp+1}{m} + j.
$$
Thus the localised Brauer class will not be terminal if we can find a primitive vector  $(p,j) \in \sigma$ with 
\begin{equation} \label{eq:boundj}
    j \leq -\frac{kp+1-m}{m}
\end{equation}
Consider the length $m$ arithmetic progression of rational numbers
\begin{equation} \label{eq:APHJ}
    -\frac{kp}{m}, -\frac{kp-1}{m}, \ldots, -\frac{kp-m+1}{m}
\end{equation}
Exactly one of these rational numbers, $j_0$ is an integer and $j_0$ is prime to $p$ by (\ref{eq:deltabasisTypeA}) and the fact that $k < m$. Also, for all numbers $j$  in the list (\ref{eq:APHJ}), the corresponding points $(p,j)$ lie in the closed cone $\bar{\sigma}$ and furthermore, $j$ satisfies (\ref{eq:boundj}). Suppose first that $p \nmid m$, in other words, $\beta$ ramifies on $E_{r+1}$. We also have $m,k$ relatively prime so $-\frac{kp}{m}$ is not the integer $j_0$.  Furthermore, we have a primitive vector $(p,j_0)\in \sigma$ violating (\ref{eq:deltaWhenUnram}). 
We now apply this argument in the case $p | m$ so $\beta$ is unramified on $E_{r+1}$ and the ramification comes from $g_{E_{r+1}}$. To be terminal, we must have $\frac{kp}{m}$ is the integer in (\ref{eq:APHJ}). However, $m,k$ are relatively prime, so the only possibility is that $m=p$. Moreover, this analysis shows that we do indeed get a terminal localised Brauer class in this case. 

Finally, we suppose there are no ramification curves on $\Spec R$ so that $\bar{\delta} \colon (0,1), (m,-k) \mapsto 1$. To be terminal, we must have non-zero $\beta$ so, as before we have $\bar{z}(0,1) = 0$ and $\bar{z}(1,0)$ is some non-zero $p$-torsion element. By assumption $\bar{z}(m,-k) = 0$ so $p|m$ and we may write $m = cp$ for some integer $c$ relatively prime to $k$. We first use (\ref{eq:deltaWhenRam}) to see
\begin{equation}  \label{eq:logDiscCondNoRamHJ}
    \bar{\delta}(1,0) = \frac{k}{m}\bar{\delta}(0,1) +   \frac{1}{m}\bar{\delta}(m,-k) = \frac{k+1}{cp} > \frac{1}{p} \quad  \implies k > c
\end{equation}
this time using the fact that $c,k$ are relatively prime. Let $j_0$ now be the integer in the arithmetic progression
$$
-\frac{k}{c},-\frac{k-1}{c}, \ldots -\frac{k-c+1}{c}.
$$
Suppose $c >1$ so $j_0$ is not the first one. By (\ref{eq:logDiscCondNoRamHJ}) and the fact that $\frac{k}{c}< \frac{m}{c}=p$, we see that $j_0$ is not divisible by $p$ either. Furthermore, $\bar{z}(p,j_0) = 0$ but
$$
\bar{\delta}(p,j_0) \leq \bar{\delta}\left(p,-\frac{k-c+1}{c}\right) = 1
$$
which violates (\ref{eq:deltaWhenUnram}). It follows that $c=1$, that is $m=p$ where one readily verifies (\ref{eq:deltaWhenRam}) and (\ref{eq:deltaWhenUnram}) hold so the localised Brauer class is terminal.
\end{proof}

We continue with the proof of the theorem by eliminating the possibility of Type D. 

\begin{lemma}  \label{lem:typeDnosoln}
If $p > 2$, then the type D dual graph (\ref{eq:typeDdiagram}), does not arise from a terminal localised Brauer class. 
\end{lemma}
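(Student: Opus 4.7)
The plan is to derive a contradiction by exhibiting an exceptional curve on the minimal log resolution $f \colon Y \to \Spec R$ whose $b$-discrepancy is non-positive. Using the notation of (\ref{eq:typeDdiagram}), let $F_1, F_2$ denote the two $(-2)$-curves attached at distinct points to the branch exceptional $E_1$ (with $m_1 = -E_1^2$), let $E_2 - \cdots - E_r$ be the subsequent chain of exceptionals with weights $m_i = -E_i^2 \geq 2$, and let $C$ be the non-exceptional curve $\bullet$ with $n_C \in \{1, p\}$. After passing to a common residue field $\kappa$, all ramifications $z_{E'}$ may be viewed in $H^1(G, \bZ/p)$ for $G = \textup{Gal}(\bar\kappa/\kappa)$.

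The structural heart of the argument is the symmetry imposed by the two $(-2)$-curves. By Proposition~\ref{prop:H2GratRes}(1), the vanishing of the secondary obstruction at each exceptional $E'$ takes the form $(E')^2 z_{E'} + \sum_{C'} z_{C'} = 0$, the sum running over curves $C'$ adjacent to $E'$. Evaluating at $F_j$ gives $z_{E_1} = 2 z_{F_j}$, and since $p > 2$ this forces $z_{F_1} = z_{F_2} =: z$ and $z_{E_1} = 2z$. Propagating along the chain, using the modified branch relation $z_{E_2} = 2(m_1-1)z$ at $E_1$, I would obtain $z_{E_i} = 2 d_i z$, where $d_i$ is defined by $d_1 = 1$, $d_2 = m_1 - 1$, $d_{i+1} = m_i d_i - d_{i-1}$. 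An easy induction using $m_i \geq 2$ shows $d_i \geq 1$ throughout.

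The same recurrence governs the $\delta$-discrepancies via adjunction: $\delta_{F_j} = (\delta_{E_1}+1)/2$, $\delta_{E_i} = d_i \delta_{E_1}$, and the endpoint relation at $E_r$ forces $d_{r+1}\delta_{E_1} = 1/n_C$. Hence $\delta_{E_1} = 1/(n_C d_{r+1}) \leq 1$, with $\delta_{E_1} \leq 1/p$ whenever $n_C = p$. The contradiction then follows from a case split on $z$. If $z \neq 0$, then $E_1$ is ramified with $n_{E_1} = p$; in the sub-case $n_C = p$ the bound $\delta_{E_1} \leq 1/p$ is immediate, while in the sub-case $n_C = 1$ the endpoint Saltman relation $2 d_{r+1} z = 0$ combined with $p > 2$ forces $p \mid d_{r+1}$, yielding the same bound. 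In either sub-case $b_{E_1} = \delta_{E_1} - 1/p \leq 0$. If instead $z = 0$, every exceptional curve is unramified and $b_{F_j} = \delta_{F_j} - 1 = (\delta_{E_1}-1)/2 \leq 0$. Either alternative contradicts terminality.

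The main obstacle I anticipate is a clean justification of the linear constraint extracted from Proposition~\ref{prop:H2GratRes}(1): this proposition describes only the secondary part of the obstruction in the Artin-Mumford-Saltman sequence, so to use it as above I must verify that no primary (i.e., secondary-ramification) obstruction is present at nodes of the exceptional configuration. For type D this is automatic: $\beta$'s only non-exceptional ramification is along $C$ and is a Galois character of the residue field, which is tame and produces no secondary ramification at any node of $Y$, so the primary obstruction vanishes and the secondary one alone governs the linear relations above.
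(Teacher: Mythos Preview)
Your argument is essentially the paper's own, stripped of the fan-calculus packaging: your recurrence $d_i$ with $d_2=m_1-1$ is exactly the paper's HJ-string with modified weight $m_1-1$ at the branch vertex, your $\delta_{E_1}=1/(n_C d_{r+1})$ coincides with its $\bar\delta(1,0)$ (since $d_{r+1}$ is the determinant $m$ of that string and $\bar\delta(0,1)=0$), and your case split on $z$ matches its split on whether $E_{r+1}$ is a ramification curve. The only cosmetic difference is that in the unramified case you test $F_j$ rather than $E_1$, which in fact cleanly covers the sub-case $\bar\zeta(1,0)=0$ that the paper leaves implicit.

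The one genuine weak spot is your final paragraph. The assertion that the ramification of $\beta$ along $C$ ``is a Galois character of the residue field'' is not a priori true: $a_C(\beta)\in H^1(K(C),\bQ/\bZ)$ may well correspond to a \emph{ramified} cyclic cover of the henselian curve $C$, and tameness does not preclude this. What actually forces all ramification covers in type~D to be \'etale is the fork structure itself. Each $(-2)$-tail $F_j$ meets only $E_1$, so its ramification cover is a cyclic cover of $\bP^1_\kappa$ with at most one branch point and hence is \'etale; vanishing of the primary obstruction at $F_j\cap E_1$ then forces $\tilde E_1\to E_1$ to be unramified there, and since the residues of a tame cyclic cover of $\bP^1_\kappa$ at $\kappa$-rational points sum to zero, $\tilde E_1$ is also unramified at $E_1\cap E_2$. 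Propagating along the chain shows every ramification cover, including $\tilde C\to C$, is \'etale, so Proposition~\ref{prop:H2GratRes} applies as you need. (The paper also glosses over this step.)
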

\begin{proof}
We need to adapt our argument in the type A case. Let $E_1$ be the exceptional curve of the resolution which intersects three others, including $E_b, E_c$ which are the $(-2)$-curves at the end of the ``fork'' of the intersection graph~(\ref{eq:typeDdiagram}). We label the other exceptionals so $E_1, \ldots, E_r$ forms a string of rational curves. Let $E_{r+1}$ denote a possible ramification curve which intersects $E_r$ in a single $\kappa$-rational point transversally. We will also let $E_0$ represent some non-exceptional curve intersecting $E_1$ in a single $\kappa$-rational point transversally. It will be in a sense a ``dummy'' curve so we can use the technology of HJ-strings and HJ-spectra. Let $m_i = -E_i^2$ and consider the HJ-string $\cE_0 \colon E_0 - \ldots - E_{r+1}$ with weights, $m_1 -1, m_2, \ldots , m_r$ (it will be clear from the proofs later how the modification of $m_1$ will cater for the exceptional curves $E_b,E_c$). We may then generate an HJ-spectrum $\cE_*$ with a gap between $E_0$ and $E_1$ in the sense of Definition~\ref{def:HJspectrum}. 

Now $\cE_0$ is no longer necessarily a strict HJ-string since we could have $m_1-1 = 1$, but there is nevertheless a fan representation $\nu$ of $\cE$ and hence of $\cE_*$. In fact, we can assume we have one with the following properties: $\nu(E_0) = (0,1), \nu(E_{r+1}) = (m,-k)$ where $k < m$ and are relatively prime and lastly, if $m_i$ is the first negative self-intersection number which differs from 2, then $\nu(E_j) = (1,i-j)$ for $1 \leq j \leq i$. The easiest way to see this is to consider an actual resolution of a Hirzebruch-Jung singularity with self-intersection numbers $m_1-1, m_2, \ldots, m_r$ and to contract $(-1)$-curves successively until a minimal resolution is reached. 

We now need a version of Proposition~\ref{prop:deltaForHJ} to calculate log discrepancies of curves in $\cE_*$. Let $\tilde{\cE} \colon E_0 - \tilde{E}_1 - \ldots  - \tilde{E}_s - \Etilde_{s+1} \in \cE_*$ and note that the gap in the HJ-spectrum means that $\Etilde_1 = E_1$. Let $\tilde{f} \colon \tilde{Y} \to \Spec R$ be the associated resolution and $\tilde{m}_i = - \Etilde^2_i$. The associated boundary divisor is $\Delta = d \tilde{f}(\Etilde_{s+1})$ where $d  = 1 - \frac{1}{p}$ if $E_{r+1} = \tilde{f}(\Etilde_{s+1})$ is a ramification curve and $d=0$ otherwise. We leave the reader to deal with the degenerate case where $s=1$ and assume that $s>1$. We write 
\begin{equation}  \label{eq:KinTypeB}
    K_{\tilde{Y}} + d \Etilde_{s+1} = f^*\Delta + \sum_{i=1}^s a_i\tilde{E}_i + a_b E_b + a_c E_C 
\end{equation}
Let $\delta(\Etilde_i)$ be the $\delta$-discrepancy of $\Etilde_i$. We take the intersection product of (\ref{eq:KinTypeB}) with an exceptional $E \in \tilde{\cE}$ and add $E^2$. Here, intersection products will be calculated with respect to $H^0(E,\cO_E)$. When $E = \Etilde_i$ for $1<i<s$, we use the adjunction formula just as in Proposition~\ref{prop:deltaForHJ} to see 
\begin{equation}  \label{eq:typeDlogdiscEi}
    0 = \delta(\Etilde_{i-1}) - \tilde{m}_i \delta(\Etilde_i) + \delta(\Etilde_{i+1}).
\end{equation}
When $E = \Etilde_s$ we find similarly
\begin{equation} \label{eq:typeDlogdiscEr}
    0 = \delta(\Etilde_{s-1}) = \tilde{m}_s \delta(\Etilde_s) + (1-d)
\end{equation}
If $E$ is one of the two (-2)-curves $E_b$ or $E_c$, then we find $a_1 = 2a_b = 2a_c$. It follows that if $E = \Etilde_1$ then we get
\begin{equation}  \label{eq:typeDlogdiscE1}
    0 = 2 - \tilde{m}_1a_1 -\tilde{m}_1 + a_2 + a_b + a_c = -(\tilde{m}_1-1) \delta(\Etilde_1) + \delta(\Etilde_2) 
\end{equation}
Equations~(\ref{eq:typeDlogdiscEi}),(\ref{eq:typeDlogdiscEr}),(\ref{eq:typeDlogdiscE1}) show that the $\delta$-disrepancy is a $\cE_*$-compatible function if we define $\delta(E_0) = 0, \delta(\Etilde_{s+1}) = 1-d$. This gives the log discrepancies of all exceptional curves in $\cE_*$. 

We now show as in Proposition~\ref{prop:deltaForHJ}, that ramification also gives an $\cE_*$-compatible function. Let $\zeta(\Etilde_i) \in H^1(G,\bQ/\bZ)$ denote the ramification of $\beta$ along $\Etilde_i$. There is secondary obstruction associated to each exceptional curve $E$ which can be computed using Proposition~\ref{prop:H2GratRes}. Its vanishing gives an equation as follows. When $1<i<s$ we find
\begin{equation}  \label{eq:typeDramEi}
    0 = \zeta(\Etilde_{i-1}) - \tilde{m}_i \zeta(\Etilde_i) + \zeta(\Etilde_{i+1})
\end{equation}
For $E = \Etilde_{s+1}$ we find
\begin{equation}  \label{eq:typeDramEr}
    0 = \zeta(\Etilde_{s-1}) - \tilde{m}_s \zeta(\Etilde_s) + \zeta(\Etilde_{s+1})
\end{equation}
When $E = E_b$ or $E_c$ we find $2\zeta(E_b) = \zeta(\Etilde_1) = 2 \zeta(E_c)$ so given that $p \neq 2$ we see $\zeta(\Etilde_1) = \zeta(E_b) + \zeta(E_c)$. Hence the obstruction for $E= \Etilde_1$ becomes
\begin{equation}  \label{eq:typeDramE1}
    0 = - (\tilde{m}_1 -1) \zeta(\Etilde_1) + \zeta(\Etilde_2)
\end{equation}
We thus see the ramification data function $\zeta$ is $\cE_*$-compatible by Equations~(\ref{eq:typeDramEi}),(\ref{eq:typeDramEr}),(\ref{eq:typeDlogdiscE1}) and the fact that $\zeta(E_0) = 0$. 

Let $\bar{\delta} \colon \bZ^2 \to \bQ, \bar{\zeta} \colon \bZ^2 \to H^1(G, \bQ,\bZ)$ which give $\delta, \zeta$ via composition with the fan representation $\nu$. Consider first the $\Etilde_{s+1} = E_{r+1}$ ramified case so $d=1-\frac{1}{p}$. Then the exceptional corresponding to $(1,0)$ has non-positive b-discrepancy since
$$ \bar{\delta}(1,0) = \frac{1}{m}\bar{\delta}(m,-k) + \frac{k}{m}\bar{\delta}(0,1) = \frac{1}{m}\frac{1}{p} + \frac{k}{m}0 \leq \frac{1}{p}
$$
and the co-efficient of the exceptional in $\Delta_{\Ytilde,\beta}$ is at most $1 - \frac{1}{p}$. Suppose now that $E_{r+1}$ is unramified. Now $\zeta(E_0) = 0 \implies \bar{\zeta}(i,j) = i \zeta$ where $\zeta = \bar{\zeta}(1,0)$. Also $\bar{\zeta}(m,-k) = 0 \implies p|m$. The exceptional corresponding to $(1,0)$ again has non-positive b-discrepancy since now 
$$ \bar{\delta}(1,0) = \frac{1}{m}\bar{\delta}(m,-k) + \frac{k}{m}\bar{\delta}(0,1) = \frac{1}{m}1 + \frac{k}{m}0 \leq \frac{1}{p}.
$$
\end{proof}

\begin{lemma}  \label{lem:typeCnosoln}
If $p \neq 2$, then the type C dual graph (\ref{eq:typeBdiagram}), does not arise from a terminal localised Brauer class. 
\end{lemma}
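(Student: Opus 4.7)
The strategy is to adapt the proof of Lemma~\ref{lem:typeDnosoln} almost verbatim, replacing the pair of $(-2)$-curves $E_b,E_c$ there by the single $(-2)$-curve $E_b \simeq \bP^1_{\kappa_1}$ which now meets $E_1 \simeq \bP^1_{\kappa_2}$ in a single $\kappa_1$-rational point with $[\kappa_1 : \kappa_2] = 2$. As before, I would label the other exceptionals so that $E_1,\ldots,E_r$ forms a string, introduce a dummy non-exceptional curve $E_0$ meeting $E_1$ transversally and a possible ramification curve $E_{r+1}$ meeting $E_r$ transversally, and form an HJ-spectrum $\cE_*$ with seed $\cE_0 \colon E_0 - E_1 - \cdots - E_{r+1}$ carrying weights $(m_1 - 1, m_2, \ldots, m_r)$ where $m_i = -E_i^2$. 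Throughout, one must track base fields carefully: intersections at $E_b$ are computed over $\kappa_1$ and at $E_1$ over $\kappa_2$.

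The first calculation is $\cE_*$-compatibility of the $\delta$-discrepancy. Writing $K_Y = f^*\Delta + \sum a_i \tilde E_i + a_b E_b$ and intersecting with $E_b$ over $\kappa_1$ uses $E_b^2 = -2$, $E_b \cdot E_1 = 1$, $K_Y \cdot E_b = 0$, and gives $a_1 - 2a_b = 0$, whence $a_b = a_1/2$. Intersecting with $E_1$ over $\kappa_2$ then uses $E_1 \cdot E_b = 2$, so the $a_b$-contribution is $2a_b = a_1$, and the resulting equation is exactly the Type~D identity $a_2 - (m_1 - 1) a_1 = -2 + m_1$. Thus the effective weight at $E_1$ is again $m_1 - 1$, and equations~(\ref{eq:typeDlogdiscEi}) and (\ref{eq:typeDlogdiscEr}) transcribe without change, establishing $\cE_*$-compatibility of $\delta$.

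The second calculation is $\cE_*$-compatibility of the ramification function $\zeta$, which is where $p \neq 2$ enters. At $E_b$, Proposition~\ref{prop:H2GratRes}(2) (with $r = 2$, $m = 1$) identifies the $E_1$-contribution to the secondary obstruction with the restriction map $\text{res} \colon H^1(G,\bQ/\bZ) \to H^1(H_b, \bQ/\bZ)$, while part~(1) gives the self-contribution as multiplication by $-1$. Vanishing of the obstruction at $E_b$ therefore reads $\text{res}(\zeta_1) - 2 \zeta_b = 0$, and since $p \neq 2$ we solve $\zeta_b = \tfrac{1}{2} \text{res}(\zeta_1)$. At $E_1$, Proposition~\ref{prop:H2GratRes}(3) gives the $E_b$-contribution as $\text{cores}(\zeta_b)$; invoking the standard identity $\text{cores} \circ \text{res} = [G : H_b] = 2$, this simplifies to $\zeta_1$. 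Combining, the obstruction at $E_1$ becomes $\zeta(E_0) + \zeta_1 - m_1 \zeta_1 + \zeta_2 = \zeta(E_0) - (m_1 - 1)\zeta_1 + \zeta_2 = 0$, confirming $\cE_*$-compatibility with the same effective weight $m_1 - 1$ as in Type~D.

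With both compatibilities in hand, the remainder of the argument is identical to the final paragraph of the proof of Lemma~\ref{lem:typeDnosoln}: one chooses a fan representation $\nu$ with $\nu(E_0) = (0,1)$, $\nu(E_{r+1}) = (m,-k)$, $\gcd(m,k)=1$, and in each of the two subcases (whether $E_{r+1}$ is ramified or not) exhibits the primitive lattice vector $(1,0)$ as corresponding to an exceptional whose b-discrepancy is at most $\tfrac{1}{p}$, contradicting terminality. The only delicate step, and the only essential use of the hypothesis $p \neq 2$, is the inversion of $2$ when solving for $\zeta_b$; that step is precisely what makes the degree-$2$ extension at $E_b$ behave, from the viewpoint of the HJ-spectrum based at $E_1$, the same as the pair of trivial $(-2)$-branches of Type~D.
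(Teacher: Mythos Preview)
Your plan is essentially the paper's own proof: reduce type~C to the type~D computation by showing that both the $\delta$-discrepancy and the ramification function $\zeta$ are $\cE_*$-compatible with the modified weight $m_1-1$ at $E_1$, then invoke the same fan-representation argument on the lattice point $(1,0)$. The only cosmetic difference is that the paper first applies $\text{cores}$ to the $E_b$-obstruction (obtaining $\zeta(E_1) = \text{cores}\,\zeta(E_b)$ directly from $2\zeta(E_1) = 2\,\text{cores}\,\zeta(E_b)$ and $p\neq 2$), whereas you first solve $\zeta_b = \tfrac12\,\text{res}(\zeta_1)$ and then apply $\text{cores}\circ\text{res}=2$; both routes land on the same equation.

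Two small slips to clean up. First, the self-contribution at $E_b$ is multiplication by $-2$, not $-1$: in Proposition~\ref{prop:H2GratRes}(1) the intersection number is taken over $R$, so $E_b\cdot E_b = -4$ with $r=2$, giving $m=-2$; your displayed equation $\text{res}(\zeta_1)-2\zeta_b=0$ already has the correct coefficient, so this is only a wording issue. Second, in the final sentence you should say the $\delta$-discrepancy of the exceptional at $(1,0)$ is at most $\tfrac1p$ (hence its $b$-discrepancy is $\leq 0$), not that the $b$-discrepancy itself is at most $\tfrac1p$.
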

\begin{proof}
The computations here will end up being the same as the type D case, and we show only how to adapt the approach to reach this point. 

Let $E_b$ now denote the $(-2)$-curve which is a $\bP^1$ defined over the quadratic extension field $\kappa'$ of $\kappa$. Let $E_1$ be the exceptional which intersects it, so we get an HJ-string $\cE_0 \colon E_0 - E_1 - \ldots - E_r-E_{r+1}$ where $E_{r+1}$ denotes a (possible) ramification curve intersecting $E_r$ and $E_0$ is some curve intersecting $E_1$ transversally in some $\kappa$-rational point. The weights are, as in the type D case, $m_1-1, m_2, \ldots, m_r$ where $m_i = - E^2_i$. Here, the intersection products of the form $?.E$ will be computed relative to the field $H^0(E_, \cO_E)$. 

We may now extend the notation in the type D case to this type C case. Equations~(\ref{eq:typeDlogdiscEi}),(\ref{eq:typeDlogdiscEr}) in the type D case determining $\delta$-discrepancy hold as before. Using the adjunction formula on $E_b$ and $\Etilde_1$ also gives Equation~(\ref{eq:typeDlogdiscE1}) so $\delta$-discrepancies for curves in the HJ-spectrum are  as in the type D case. 

We now look at computing the ramification function $\zeta$. The Equations~(\ref{eq:typeDramEi}),(\ref{eq:typeDramEr}) that we get from vanishing of secondary obstruction on exceptionals $\Etilde_2, \ldots, \Etilde_s$ hold in this case too. On $E_b$ we use Proposition~\ref{prop:H2GratRes}(2) to see $ -2\zeta(E_b) + \textup{res}\,\zeta(\Etilde_1) = 0$ where the restriction is induced from the full Galois group $\textup{Gal}(\bar{\kappa}/\kappa)$ to its index two subgroup $\textup{Gal}(\bar{\kappa}/\kappa')$. Taking corestriction gives $2\zeta(\Etilde_1) = 2\textup{cores} \zeta(E_b)$. Under assumptions that $p \neq 2$, we have in fact 
\begin{equation}  \label{eq:typeCramEb}
\zeta(\Etilde_1) = \textup{cores} \zeta(E_b).
\end{equation}
We use Proposition~\ref{prop:H2GratRes}(3) to see vanishing of the secondary obstruction on $\Etilde_1$ gives 
\begin{equation}  \label{eq:typeCramE1}
    0 = \textup{cores}\zeta(E_b) - m_1 \zeta(\Etilde_1) + \zeta(\Etilde_2).
\end{equation}
Putting these equations together gives Equation~(\ref{eq:typeDramE1}) and the desired contradiction follows as in the type D case. 
\end{proof}

\begin{lemma}  \label{lem:typeBnosoln}
If $p \neq 2$, then the type B dual graph (\ref{eq:typeBdiagram}), does not arise from a terminal localised Brauer class. 
\end{lemma}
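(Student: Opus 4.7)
The strategy is to mirror Lemma~\ref{lem:typeCnosoln}, with the direction of the double edge reversed. In type B the double edge sits at the right end of the chain, so the outermost exceptional $E_b$ is defined over the residue field $\kappa = \kappa_2$ (the smaller of the two), while the adjacent exceptional $E_r$ is defined over the quadratic extension $\kappa_1$; as in the type C and D cases we take $E_b$ to be a $(-2)$-curve. I would form the HJ-string
$$\cE_0 \colon E_0 - E_1 - \cdots - E_r - E_{r+1},$$
where $E_0 = \bullet$ is the possible ramification curve on $\Spec R$ and $E_{r+1}$ is an auxiliary dummy curve intersecting $E_r$ transversely at a $\kappa_1$-rational point, and assign weights $m_1, m_2, \ldots, m_{r-1}, m_r - 1$ with $m_i = -E_i^2$; the modification at the right end is designed to absorb the contribution of the appendage $E_b$.

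The first main step is to verify $\cE_*$-compatibility of the $\delta$-discrepancy. Intersecting the discrepancy equation with $E_b$ relative to $H^0(E_b, \cO_{E_b}) = \kappa$ (so that $E_r \cdot E_b = [\kappa_1 : \kappa] = 2$ and $E_b^2 = -2$) and applying adjunction gives $\delta(E_b) = \delta(E_r)$. Intersecting with $E_r$ relative to $\kappa_1$ and substituting then yields, after declaring $\delta(E_{r+1}) := 0$,
$$\delta(E_{r-1}) - (m_r - 1)\delta(E_r) + \delta(E_{r+1}) = 0,$$
namely the HJ-relation with modified weight $m_r - 1$ at position $r$; the interior relations are standard.

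The second main step, where the hypothesis $p \neq 2$ is invoked, is the analogous statement for the ramification function $\zeta$. Since $E_b$ is over $\kappa$ we have $H_b = G$, and Proposition~\ref{prop:H2GratRes}(1) and (3) applied at $E_b$ give
$$-2\zeta(E_b) + \textup{cores}\,\zeta(E_r) = 0,$$
from which $\zeta(E_b) = \tfrac{1}{2}\textup{cores}\,\zeta(E_r)$. The contribution of $E_b$ to the secondary obstruction at $E_r$ is $\textup{res}\,\zeta(E_b)$ by Proposition~\ref{prop:H2GratRes}(2), and the identity $\textup{res}\circ\textup{cores} = [G : H_r] = 2$ on $H^1(H_r, \bQ/\bZ)$ reduces this to $\zeta(E_r)$. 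Substituting and setting $\zeta(E_{r+1}) := 0$ produces
$$\zeta(E_{r-1}) - (m_r - 1)\zeta(E_r) + \zeta(E_{r+1}) = 0,$$
again matching the modified HJ-relation.

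With $\cE_*$-compatibility of both functions established, Proposition~\ref{prop:fancalculus} produces a fan representation and linear extensions $\bar\delta, \bar\zeta$ on $\bZ^2$ with $\bar\delta(0,1) = \delta(E_0)$, $\bar\delta(m,-k) = 0$, $\bar\zeta(0,1) = \zeta(E_0)$, and $\bar\zeta(m,-k) = 0$, where $m = \det(m_1, \ldots, m_{r-1}, m_r - 1) \geq 2$. Reversing the orientation of the HJ-string moves the zero boundary data to the $(0,1)$-side and places us in precisely the framework of Lemma~\ref{lem:typeDnosoln}; its argument then transfers verbatim, and in both the ramified and unramified sub-cases for $E_0$ the primitive vector corresponding to $E_r$ satisfies $\bar\delta \leq 1/p$, forcing non-positive b-discrepancy on $E_r$ and contradicting terminality. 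The main technical obstacle is the careful field-of-definition bookkeeping in the two main calculations; once the modified HJ-relation at position $r$ is in place, the combinatorial contradiction follows automatically.
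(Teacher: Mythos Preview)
Your proposal has a genuine gap: you assume that the outermost exceptional $E_b$ (the $\bP^1_\kappa$ at the right end of the chain) is a $(-2)$-curve, by analogy with types~C and~D. But in the type~B diagram~(\ref{eq:typeBdiagram}) the rightmost vertex is marked~$\ast$, not~$2$, so its weight $m_b := -E_b^2$ is only known to satisfy $m_b \geq 2$. This is exactly what distinguishes type~B from types~C and~D in the classification. With general $m_b$, adjunction at $E_b$ gives $m_b\,\delta(E_b) = 2\,\delta(E_r)$ and the secondary obstruction at $E_b$ gives $m_b\,\zeta(E_b) = \textup{cores}\,\zeta(E_r)$; substituting into the relation at $E_r$ replaces $m_r$ not by $m_r-1$ but by $m_r - 2/m_b$, which is non-integral as soon as $m_b>2$. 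Your HJ-string with modified end weight, and hence the reduction to Lemma~\ref{lem:typeDnosoln}, therefore only covers the special case $m_b=2$.

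The paper handles arbitrary $m_b$ by a different device: it views type~B as a \emph{folded} type~A. Relabelling so that $E_0$ is the $\bP^1_\kappa$ (your $E_b$) and $E_{r+1}$ is the possible ramification curve, one formally sets $E_{-i}:=E_i$ and assembles the symmetric HJ-string $E_{-(r+1)} - \cdots - E_{-1} - E_0 - E_1 - \cdots - E_{r+1}$ with palindromic weights $m_r,\ldots,m_1,m_0,m_1,\ldots,m_r$. The awkward relation $m_0\,\delta(E_0)=2\,\delta(E_1)$ then rewrites as the honest HJ-relation $\delta(E_{-1}) - m_0\,\delta(E_0) + \delta(E_1)=0$, and likewise for $\zeta$ (after corestricting so everything lives in $H^1(G,\bQ/\bZ)$). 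One is thus squarely in the type~A setting and may quote Lemma~\ref{lem:typeAcase}: this forces $E_{r+1}$ unramified and the determinant of the unfolded string equal to $p$, with $\bar\zeta(i,j)=i\zeta$. The contradiction then comes from the fold itself: $E_{-r}$ and $E_r$ are the same actual curve, so $\zeta(E_{-r})=\zeta(E_r)$, yet $\nu(E_{-r})=(1,0)$ while $\nu(E_r)$ has first coordinate strictly between $1$ and $p$, so $\bar\zeta$ assigns them distinct values. Your approach, when it applies, reduces instead to the type~D argument and avoids the symmetry step; but to make it work in general you would need to absorb a non-integral weight, whereas the folding trick sidesteps this entirely.
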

\begin{proof}
Since the technique is similar to the other cases, and many of the computations are just a mild generalisation of what is in the type A case, we only briefly indicate the new elements in the proof. 

The basic idea here is to view the type B graph (\ref{eq:typeBdiagram}) as the type A graph folded onto itself, a viewpoint which is readily attained by extending with the quadratic field extension $\kappa'/\kappa$ and thereby splitting all the exceptional $\bP^1_{\kappa'}$ into two projective lines. 

We follow the familiar procedure and let $E_0$ be the exceptional curve isomorphic to $\bP^1_{\kappa}$, $E_1$ the exceptional curve intersecting it so the rest of the graph forms a string $E_1, \ldots,E_r$. We let $E_{r+1}$ again denote a possible ramification curve intersecting $E_r$ transversally in a $\kappa'$-rational point. Finally, let $m_i = -E_i^2$. To compute log discrepancies along an exceptional $E$, we as usual use the adjunction formula. For $E = E_0$ we see this time that 
\begin{equation}  \label{eq:typeBdiscE0}
    0 = -m_0\delta(E_0) + 2 \delta(E_1)
\end{equation}
To encode this in some $\cE_*$-compatibility condition, we concoct the HJ-string $\cE \colon E_{-(r+1)} - E_{-r} - \ldots -E_r -E_{r+1}$ where formally $E_{-i} = E_i$ and the weight $m_{-i} = m_i$. Equation~(\ref{eq:typeBdiscE0}) now can be re-written as
$$ 0 = \delta(E_{-1}) - m_0 \delta(E_0) + \delta(E_1)
$$
so $\cE_*$-compatibility holds. 
Similarly, vanishing of secondary obstruction for ramification shows that ramification also gives an $\cE_*$-compatible function so we are reduced to the type A case. 

In particular, we know that $E_{r+1}$ must be unramified and that $|
\cE| = p$. Consider the usual fan representation $\nu \colon \bZ \cE \to \bZ^2$ with $\nu(E_{-(r+1)}) = (0,1), \nu(E_{r+1}) = (m,-k)$. We have $m = p$ and ramification can be computed by $\bar{\zeta}(i,j) = i \zeta$ where $\zeta = \bar{\zeta}(1,0)$. We obtain a contradiction since $\zeta(E_{-r}) = \bar{\zeta}(1,0) = \zeta$ is meant to equal $\zeta(E_r)$. But from the fan combinatorics of Hirzebruch-Jung singularities, $(i,j) :=\nu(E_r)$ lies in the part of $\bR_{>0} (0,1) + \bR_{>0} (m,-k)$ where $i < p$ so $\bar{\zeta}(i,j) \neq \zeta$. 

\end{proof}

The proof of Theorem~\ref{thm:terminalPrimeIndex} is now complete. 
\end{proof}

\section{Contraction theorems}  \label{sec:contraction}

In this section, we wish to give  sufficient criteria for the contractibility of terminal localised Brauer classes and classify contractions, at least under the additional assumption where the localised Brauer class has index a prime $p>5$. In particular, we show that given any terminal localised Brauer class $(\beta,g)$ on X and closed point $x \in X$, there is a unique proper birational morphism $f \colon Y \to X$ such that $Y$ is still terminal with respect to $(\beta,g)$, and $f$ contracts a single irreducible curve to $x$ and is an isomorphism away from $x$. We may thus think of $Y$ as the ``blowup'' of $X$ at $x$ with respect to the localised Brauer class $(\beta,g)$. 

Throughout this section, let $X$ be an arithmetic surface and $(\beta, g)$ be a terminal localised Brauer class on the birational equivalence class of $X$. We let $\Delta_{X,\beta,g}$ be the associated boundary divisor and $K_{X,\beta,g}$ be the canonical divisor $K_X + \Delta_{X,\beta,g}$. 

\begin{lemma}  \label{lem:weakCastelnuovo}
Let $(\beta,g_C)$ be a terminal localised Brauer class on $X$. Suppose there exists a projective irreducible curve $E \subset X$ and a morphism $f \colon X \to Y$ contracting precisely $E$ such that $K_{X,\beta,g}.E <0$. Then $(\beta,g_{f_*C})$ is terminal on $Y$. 
\end{lemma}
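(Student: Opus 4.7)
The approach is the standard negativity/pullback argument from the MMP, adapted to b-discrepancy. First I would observe that since $f$ contracts the projective curve $E$, the self-intersection $E^2 < 0$ by the usual negative-definiteness of contracted exceptional loci. I next compute the b-discrepancy $a_E$ of $E$ over $Y$: writing $K_{X,\beta,g} \equiv f^*K_{Y,\beta,g} + a_E E$ (possible because the boundary $\Delta_{X,\beta,g}$ is compatible with pushforward, i.e.\ $f_*\Delta_{X,\beta,g} = \Delta_{Y,\beta,g_{f_*C}}$, and $f_*K_X$ agrees with $K_Y$) and intersecting both sides with $E$ gives $K_{X,\beta,g}.E = a_E E^2$, hence $a_E > 0$ from the two sign hypotheses.

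Now take any proper birational morphism $h\colon Z \to Y$ and any irreducible exceptional curve $F \subset Z$; we must show its b-discrepancy $b_F^Y > 0$. Choose a smooth enough common birational model $p\colon W \to X$, $q\colon W \to Y$ on which both $F$ and the strict transform $\tilde{E}$ of $E$ appear as divisors, with $q = f \circ p$. Pulling back the identity $K_{X,\beta,g} \equiv f^*K_{Y,\beta,g} + a_E E$ by $p$ and writing $p^*E = \tilde{E} + \sum_i c_i F_i$ with $c_i \geq 0$ and $F_i$ running over $p$-exceptional curves, I obtain
\begin{equation*}
K_{W,\beta,g} - q^*K_{Y,\beta,g} \;=\; \bigl(K_{W,\beta,g} - p^*K_{X,\beta,g}\bigr) + a_E \tilde{E} + a_E \sum_i c_i F_i.
\end{equation*}

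The coefficients on the right are all nonnegative: the first summand contributes the b-discrepancies $b_{F_i}^X > 0$ of $p$-exceptional curves (positive since $(X,\beta,g)$ is terminal), and $a_E, c_i \geq 0$. So for any $q$-exceptional curve appearing on $W$, I just need to identify which category it falls into. Every $q$-exceptional curve on $W$ is either (i) $p$-exceptional too, in which case its coefficient is $b_{F_i}^X + a_E c_i > 0$, or (ii) not $p$-exceptional, in which case its image under $p$ is a curve mapped to a point by $f$, forcing it to be $\tilde{E}$ itself, with coefficient $a_E > 0$. Since any divisor $F$ over $Y$ can be realised on such a common model $W$ (resolving indeterminacy of $Z \dashrightarrow X$), this proves $b_F^Y > 0$ in all cases.

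The only subtle point, and what I would identify as the main thing to get right, is the compatibility of the boundary divisor under the various birational maps: one must verify that $\Delta_{\cdot,\beta,g}$ pulls back and pushes forward in the expected way so that the formula $K_{X,\beta,g} \equiv f^*K_{Y,\beta,g} + a_E E$ is legitimate. This follows from the fact that the localised Brauer class $(\beta,g)$ is defined on the birational equivalence class, so the boundary divisor on each model is intrinsically determined, and under $f$ the only component of $\Delta_{X,\beta,g}$ that is not preserved is a potential contribution from $E$, which is absorbed into the coefficient $a_E$ (together with the difference of canonical divisors). Once this is in place, the argument above is purely formal.
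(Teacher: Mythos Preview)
Your proof is correct and follows essentially the same approach as the paper: write $K_{X,\beta,g} = f^*K_{Y,\beta,g} + a_E E$, deduce $a_E>0$ from $K_{X,\beta,g}.E<0$ and $E^2<0$, then pull back through a model over $X$ and use terminality of $(X,\beta,g)$ together with $a_E>0$ and the effectivity of $p^*E$. The paper's version is terser---it simply considers $g\colon W\to X$ and writes $K_{W,\beta,g}=g^*(f^*K_{Y,\beta,g}+aE)+\sum a_iE_i$---leaving implicit the passage to a common model that you spell out; your added care about realising an arbitrary exceptional divisor over $Y$ on such a $W$ and your remark on boundary compatibility are both appropriate.
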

\begin{proof}
We write
\begin{equation*}
    K_{X,\beta,g} = f^* K_{Y,\beta,g} + aE 
\end{equation*}
and observe on taking the intersection product with $E$ (with respect to $H^0(E,\cO_E)$), that $K_{X,\beta,g}.E <0 \implies a>0$. 

Now consider any proper birational morphism $g \colon W \to X$. Since $(\beta,g)$ is terminal on $X$, we know there are positive discrepancies $a_i$ giving 
\begin{equation}
K_{W,\beta,g} = g^*K_{X,\beta,g} + \sum a_iE_i = g^*(f^*K_{Y,\beta,g} + aE) + \sum a_iE_i.
\end{equation}
The proposition follows. 
\end{proof}

Ideally, we would like to replace the contractibility hypothesis in the lemma above with the numerical criterion $E^2 <0$. One easy general result is the following.

\begin{corollary}  \label{cor:contractUnramifiedCurve}
Let $(\beta,g_C)$ be a terminal localised Brauer class on $X$ and let $E \subset X$ be a projective irreducible curve with $K_{X,\beta,g}.E < 0, \ E^2<0$. If $\beta$ is also unramified on $E$, then $E$ is contractible, say via $f \colon X \to Y$ and $(\beta,g)$ is also terminal on $Y$. 
\end{corollary}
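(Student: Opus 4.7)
The plan is to reduce the corollary to two ingredients: a classical contractibility theorem to produce the morphism $f \colon X \to Y$, and Lemma~\ref{lem:weakCastelnuovo} to upgrade the mere contraction to a statement about terminality of the resulting localised Brauer class.

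First, I would establish the existence of $f$. Since $E$ is an irreducible projective curve with $E^2 < 0$, its one-by-one intersection matrix is negative definite. Artin contractibility in Lipman's excellent form \cite[Thm.~27.1]{Lip}, already invoked in the construction of terminal resolutions earlier in the paper, then yields a proper birational morphism $f \colon X \to Y$ onto a two-dimensional normal excellent scheme $Y$, contracting $E$ to a closed point and restricting to an isomorphism on the complement. That $X$ is quasi-projective over a Noetherian affine base, which is part of the definition of an arithmetic surface, ensures Lipman's hypotheses are satisfied.

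Next I would transfer the data across $f$. The hypothesis that $\beta$ is unramified on $E$ gives $e_E = 1$, so $E$ does not contribute to the ramification part of $\Delta_{X,\beta,g}$; any hypothetical contribution would arise from $g_E$, which is only defined for affine curves and is, in any case, irrelevant once $E$ is contracted to a point. Hence pushing forward produces a bona fide localised Brauer class $(\beta, g_{f_*C})$ on $Y$. Applying Lemma~\ref{lem:weakCastelnuovo} to our $f$, with the hypotheses $K_{X,\beta,g}\cdot E < 0$ (given) and $(\beta,g)$ terminal on $X$ (given), we deduce that $(\beta,g_{f_*C})$ is terminal on $Y$, as required.

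The main obstacle is really only the first step: verifying that Artin--Lipman contraction applies cleanly in the arithmetic setting. Once the morphism exists, the rest is immediate from the preceding lemma. The unramified hypothesis on $E$ plays a largely organisational role, ensuring that we need not track any ramification data along the contracted curve; this cleanly separates this corollary from the full Castelnuovo contraction theorem, where ramification on $E$ is permitted and a much more delicate analysis via the fan calculus and the Artin--Mumford--Saltman sequence is needed.
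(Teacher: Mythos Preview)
Your argument is correct, and the appeal to Lemma~\ref{lem:weakCastelnuovo} for the terminality conclusion matches the paper exactly. The route to the existence of $f$, however, differs from the paper's.

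You invoke Artin--Lipman contraction directly from the single hypothesis $E^2<0$. The paper instead uses the unramified hypothesis substantively at this step: since $\beta$ is unramified on $E$, the curve $E$ carries no term in $\Delta_{X,\beta,g}$, so $\Delta_{X,\beta,g}\cdot E \ge 0$; combined with $K_{X,\beta,g}\cdot E<0$ this forces $K_X\cdot E<0$, and the contraction is then obtained from the MMP for excellent surfaces (citing Koll\'ar--Kov\'acs and Bhatt et al.). Your approach has the virtue that the unramified hypothesis is, as you observe, genuinely inert for contractibility itself---only $E^2<0$ is needed---and the paper's own earlier use of \cite[Thm.~27.1]{Lip} legitimises the citation. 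The paper's approach, on the other hand, extracts the extra piece of information $K_X\cdot E<0$, keeps the argument within the MMP framework that governs the rest of the section, and sidesteps any worry about whether the Artin--Lipman target remains in the paper's category of arithmetic surfaces (quasi-projective over a noetherian affine base). Your characterisation of the unramified hypothesis as ``largely organisational'' is accurate for your proof but does not reflect how the paper actually deploys it.
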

\begin{proof}
In this case we have $K_{X,\beta,g} = K_X + \Delta$ where $\Delta.E \geq 0$ since $\beta$ is unramified on $E$. The inequality $0> K_{X,\beta,g}.E = (K_X + \Delta).E$ ensures that $K_X.E<-\Delta.E \leq 0$ so we may contract $E$ by MMP for surfaces (see  \cite[Lemma~2.3.5]{KolKov} or \cite{bhatt+}, which apply in our setting, despite more stringent hypotheses which are not used in the proof of this fact). The Corollary now follows from Lemma~\ref{lem:weakCastelnuovo}. 
\end{proof}

To deal with the ramified case, we need to make use of the classification of terminal localised Brauer classes. 
\begin{proposition}  \label{prop:contractRamifiedCurve}
Suppose that $(\beta,g)$ is a localised Brauer class which is terminal on $X$ and of prime index $p>5$. Let $E \subset X$ be an irreducible projective curve with $K_{X,\beta,g}.E <0, \ E^2 < 0$ and such that $\beta$ ramifies on $E$. Then there is a contraction map $f \colon X \to Y$, and $(\beta,g)$ is terminal on $Y$ too. Furthermore, exactly one of the following occurs.
\begin{enumerate}
    \item the ramification cover $\Etilde \to E$ of $\beta$ along $E$ is itself ramified. In this case, $Y$ is regular and \'etale locally at $f(E)$, we are in case~(3) of Theorem~\ref{thm:terminalPrimeIndex}, that is, $\beta$ has secondary ramification at $f(E)$ which is a node of the ramification locus. 
    \item The ramification cover $\Etilde \to E$ of $\beta$ along $E$ is unramified and $E$ is a (-p)-curve. 
\end{enumerate}
\end{proposition}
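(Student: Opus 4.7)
The plan is to combine the classification Theorem~\ref{thm:terminalPrimeIndex} along $E$ with adjunction, Riemann--Hurwitz for the ramification cover $\tilde E\to E$, and the ramification-on-blowup formula of Corollary~\ref{cor:ramexc}. Since $\beta$ ramifies on $E$, the classification forces each point $q\in E$ into case~(2) or case~(3); in particular $X$ is regular in a neighbourhood of $E$, $E$ is smooth, and every other ramification curve meeting $E$ does so transversely with multiplicity one. Letting $D$ be the sum of those other ramification curves, all of which carry $n_C=p$, adjunction gives
$$K_{X,\beta,g}.E \;=\; 2g(E)-2 \;-\; E^2/p \;+\; (1-1/p)\,D.E,$$
so the hypotheses $K_{X,\beta,g}.E<0$ and $E^2<0$ force $g(E)=0$, $D.E\leq 2$ (using $p>5$), and $E^2 > -2p+(p-1)D.E$.

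Next I would analyse $\tilde E\to E$ via Artin--Mumford--Saltman secondary-ramification cancellation: each ramification point of $\tilde E\to E$ sits in $E\cap D$, so there are at most $D.E\leq 2$ of them. Since a geometrically connected cyclic $p$-cover of a genus-zero curve requires at least two ramification points by Riemann--Hurwitz, I obtain a clean dichotomy: either $\tilde E$ is ramified, forcing $D.E=2$ and hence $E^2=-1$; or $\tilde E$ is unramified, with $D.E\leq 1$.

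In the ramified subcase, classical Castelnuovo contracts the $(-1)$-curve $E$ to a regular point $y$. The branches of $D$ at the two meeting points push forward to two transverse ramification branches through $y$, and their non-trivial secondary ramifications are inherited by the images at $y$, placing $(Y,\beta,g)$ locally in case~(3) of Theorem~\ref{thm:terminalPrimeIndex}. In the unramified subcase I claim $y=f(E)$ cannot be regular: otherwise Corollary~\ref{cor:ramexc} would compute $\zeta_E=\sum_i m_{C_i,y}\zeta_{C_i}$, and enumerating the allowed local terminal configurations at $y$ (case~(1), or case~(2) with a $g$-only or a $\beta$-ramified transverse branch) either yields $\zeta_E=0$, contradicting $\beta$ ramifying on $E$, or forces two $\beta$-ramified curves to meet on $X$ with trivial secondary ramification, which is excluded by the fan-discrepancy analysis in the proof of Lemma~\ref{lem:smoothAndPrime}. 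Hence $y$ is singular and case~(4) of Theorem~\ref{thm:terminalPrimeIndex} applies; since $E$ is the unique exceptional over $y$ we obtain $-E^2=|\cE|=p$. Artin's contractibility theorem provides the contraction, and Lemma~\ref{lem:weakCastelnuovo} then yields terminality of $(\beta,g)$ on $Y$ in both subcases.

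The main obstacle I expect is the unramified subcase: excluding a regular target point requires a careful enumeration at $y$ combining Corollary~\ref{cor:ramexc} with the non-terminality criterion extracted from the proof of Lemma~\ref{lem:smoothAndPrime}, after which the value $E^2=-p$ drops out of case~(4) of the classification.
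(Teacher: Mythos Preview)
Your approach is largely correct and reaches the same conclusion, but your route in the unramified subcase differs substantively from the paper's.

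For case~(1) (ramified cover), you are spelling out what the paper handles by citing \cite[Theorem~3.10]{CI05}: Riemann--Hurwitz forces at least two geometric branch points, hence $D.E=2$, hence $E^2=-1$. This is fine, though you should note that the two branch points could form a single Galois orbit on $E$; what saves you is that $D.E$, computed over $H^0(E,\cO_E)$, counts geometric points of a transverse intersection, so $D.E\geq 2$ either way.

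For case~(2) (unramified cover), the paper is more direct. Once one observes that $\Etilde\to E$ unramified forces every point of $E$ into case~(2) of Theorem~\ref{thm:terminalPrimeIndex} (so $\beta$ is unramified away from $E$ in an \'etale neighbourhood), vanishing of the secondary obstruction along $E$ (in the spirit of Proposition~\ref{prop:H2GratRes}) gives $p\mid E^2$ immediately. Writing $E^2=-mp$, adjunction and $K_{X,\beta,g}.E<0$ then pin down $m=1$ in two lines. Your route---contract first, then argue the image $y$ cannot be regular via Corollary~\ref{cor:ramexc}---works but is circuitous, and needs the extra step that if both $X,Y$ are regular and $f$ contracts a single curve, then $f$ is a single blowup (Zariski factorisation on regular surfaces) before Corollary~\ref{cor:ramexc} applies.

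One correction: in the unramified subcase you write ``$D.E\leq 1$'', but this is neither what you prove nor what you need. What follows from $\Etilde\to E$ unramified is that every point of $E$ lies in case~(2), so no \emph{$\beta$-ramified} curve other than $E$ meets $E$; however, $g$-only ramification curves can still meet $E$, so $D.E$ need not be bounded by $1$. Your subsequent argument only uses the absence of $\beta$-ramified neighbours, so this slip is harmless, but the statement as written is false.
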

\begin{proof}
The classification of terminal localised Brauer classes Theorem~\ref{thm:terminalPrimeIndex}(4) shows that $X$ is regular in some Zariski open neighbourhood of $E$ so we may assume that $X$ is regular. If the ramification cover $\Etilde \to E$ is ramified, then we may apply the argument in \cite[Theorem~3.10]{CI05} for Castelnuovo contraction for terminal orders on geometric surfaces. This gives case~(1).

We thus assume that $\Etilde \to E$ is unramified. All intersection products below will be with respect to $H^0(E)$. We write 
\begin{equation*}
    \Delta_{X,\beta,g} = \Delta' + \left(1 - \frac{1}{p}\right)E, \quad \text{so} \quad \Delta'.E \geq 0.
\end{equation*}
Now 
\begin{equation*}
    K_X.E + E^2 = K_{X,\beta,g}.E - \Delta'.E - (1-p^{-1})E.E + E^2 < - \Delta'.E + \frac{1}{p}E^2 < 0
\end{equation*}
so $E$ is a smooth rational curve.
Now at every point of $E$, we must be in case~(2) of Theorem~\ref{thm:terminalPrimeIndex}, so on some \'etale neighbourhood of $E$, $\beta$ is unramified away from $E$. Thus vanishing of secondary obstruction for $\beta$ shows that $E^2 = -mp$ for some integer $m$. The adjunction formula now gives $K_X.E = mp-2$. Hence 
\begin{equation*}
 0> K_{X,\beta,g}.E = mp-2 + \Delta'.E - (1 - p^{-1})mp = \Delta'.E + m -2. 
\end{equation*}
It follows that $m-2 < - \Delta'.E \leq 0$ so $m=1$ and we are in case~(2) of the proposition. 
\end{proof}

\begin{remark}
Note that Case~(2) of Proposition~\ref{prop:contractRamifiedCurve} does actually occur, for if $X$ is an \'etale local neighbourhood of a $(-p)$-curve $E$ in a regular arithmetic surface, there exists a Brauer class $\beta$ which is ramified precisely on $E$. This is an example of starkly new phenomena that is not seen either for the classical case of arithmetic surfaces (that is, with trivial Brauer class), nor for geometric surfaces (with non-trivial Brauer class).
\end{remark}

We now have the following version of the Castelnuovo contraction theorem. 
\begin{theorem}  \label{thm:Castelnuovo}
Let $X$ be an arithmetic surface and $(\beta,g)$ a terminal localised Brauer class on $X$ of prime index $p>5$. Suppose that $E \subset X$ is a projective irreducible curve with $K_{X,\beta,g}.E<0, \ E^2 <0$. Then $E$ is contractibe, say via $f \colon X \to Y$ and $(\beta,g)$ is terminal on $Y$. 

Conversely, suppose $f \colon X \to Y$ is a proper birational morphism contracting a single irreducible curve $E$. If both $X,Y$ are $(\beta,g)$-terminal, then $K_{X,\beta,g}.E< 0$ and $E^2 <0$. 
\end{theorem}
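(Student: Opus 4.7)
\bigskip

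\noindent\textbf{Proof proposal.} The forward direction is essentially an assembly of the two preceding results. My plan is to dichotomise according to whether the Brauer class $\beta$ ramifies along $E$ or not. If $\beta$ is unramified on $E$, then the hypotheses $K_{X,\beta,g}.E<0$ and $E^2<0$ put us exactly in the setup of Corollary~\ref{cor:contractUnramifiedCurve}, which directly produces a contraction $f\colon X\to Y$ and the terminality of $(\beta,g)$ on $Y$. If instead $\beta$ ramifies along $E$, then Proposition~\ref{prop:contractRamifiedCurve} applies in full and yields both the contraction and the terminality of $Y$ (together with the additional structural information (1) or (2), which is not needed for the present statement). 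So the existence half is just a matter of citing the correct result in each case.

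For the converse, suppose $f\colon X\to Y$ is a proper birational morphism contracting a single irreducible curve $E$ to a point, and assume both $X$ and $Y$ are $(\beta,g)$-terminal. The first claim $E^2<0$ is a classical fact about contractions of curves to points on normal (excellent) surfaces: the intersection matrix of any set of curves contracted to a point by a proper birational morphism between such surfaces is negative definite. This is standard (essentially Mumford / Zariski's lemma; see e.g.\ the references to \cite{Lip} and \cite{Liu02} already used in the text for Artin contraction), and in particular forces the single contracted curve $E$ to satisfy $E^2<0$.

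For the claim $K_{X,\beta,g}.E<0$, I would invoke directly the definition of b-discrepancy from Section~\ref{sec:discrepancy}. Since $f$ is a proper birational morphism and $E\subset X$ is an exceptional curve, we may write
\begin{equation*}
K_{X,\beta,g} \;=\; f^{*}K_{Y,\beta,g} \,+\, b_{E}\, E,
\end{equation*}
where $b_E$ is precisely the b-discrepancy of $(Y,\beta,g)$ along $E$. Because $(Y,\beta,g)$ is terminal, $b_E>0$. Intersecting with $E$ (relative to $H^0(E,\cO_E)$, which is the convention fixed in the Notation block for self-intersection, and under which $f^{*}K_{Y,\beta,g}.E=0$ by the projection formula since $f(E)$ is a point) gives
\begin{equation*}
K_{X,\beta,g}.E \;=\; b_{E}\, E^{2} \;<\; 0,
\end{equation*}
combining $b_E>0$ with the already-proved $E^2<0$. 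This completes the converse.

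The only genuinely non-routine part of the argument is the ramified sub-case of the forward direction, but that work has already been done in Proposition~\ref{prop:contractRamifiedCurve}; at the level of the present theorem, no additional obstacle arises. The main conceptual point worth flagging is the compatibility of the intersection conventions: $E^2$ is computed relative to $H^0(E,\cO_E)$ throughout, and with that convention the projection formula $f^{*}K_{Y,\beta,g}.E=0$ holds verbatim, so there is no hidden scaling factor in the computation $K_{X,\beta,g}.E=b_E E^2$.
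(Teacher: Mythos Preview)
Your proposal is correct and follows essentially the same approach as the paper. The paper's own proof for the forward direction is precisely the dichotomy you describe, citing Corollary~\ref{cor:contractUnramifiedCurve} in the unramified case and Proposition~\ref{prop:contractRamifiedCurve} in the ramified case; the converse is stated to be ``easy and left to the reader,'' and the argument you supply (negative definiteness for $E^2<0$, then $K_{X,\beta,g}.E = b_E E^2$ with $b_E>0$ from terminality of $Y$) is exactly the intended one.
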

\begin{proof}
The proof of contractibility follows from Corollary~\ref{cor:contractUnramifiedCurve} and Proposition~\ref{prop:contractRamifiedCurve}. The converse is easy and left to the reader. 
\end{proof}

\begin{definition}  \label{def:Castelnuovo}
We will refer to the contraction map $f \colon X \to Y$ in Theorem~\ref{thm:Castelnuovo} as a {\em Castelnuovo} or {\em Mori contraction of $E$} with respect to $(\beta,g)$, or more briefly, a $(\beta,g)$-contraction.
\end{definition}

The Zariski factorisation theorem in this context follows formally using the standard argument which we reproduce.

\begin{theorem}  \label{thm:ZariskiFactor}
Let $(\beta,g)$ be a terminal localised Brauer class of prime index $p>5$ on $X$. Let $f \colon Y \to X$ be a proper birational morphism such that $Y$ is also $(\beta,g)$-terminal. Then $f$ factors through a $(\beta,g)$-contraction $\pi \colon Y \to \bar{Y}$.
\end{theorem}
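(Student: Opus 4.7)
The plan is to find an $f$-exceptional curve $E \subset Y$ satisfying the hypotheses of Theorem~\ref{thm:Castelnuovo} (namely $K_{Y,\beta,g}.E < 0$ and $E^2 < 0$), contract it to obtain the desired $\pi \colon Y \to \bar Y$, and then show that $f$ factors through $\pi$ by a rigidity argument. We may of course assume $f$ is not already an isomorphism.

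First I would exploit that $(X,\beta,g)$ is terminal to write
\begin{equation*}
K_{Y,\beta,g} = f^* K_{X,\beta,g} + \sum_E b_E \, E,
\end{equation*}
where the sum runs over the $f$-exceptional curves and every $b_E > 0$. Setting $D := \sum_E b_E E$, I would invoke the negative-definiteness of the intersection form on the $f$-exceptional curves of a proper birational morphism of arithmetic surfaces (Mumford's theorem, computing intersections over any chosen base ring, see for instance \cite[Section~9.1]{Liu02}). Since $D$ is a nonzero effective divisor supported on the exceptional locus, $D^2 < 0$, so there must exist an $f$-exceptional curve $E_0$ in $\textup{Supp}(D)$ with $D.E_0 < 0$. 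By the projection formula and the fact that $E_0$ is $f$-exceptional,
\begin{equation*}
K_{Y,\beta,g}.E_0 = f^*K_{X,\beta,g}.E_0 + D.E_0 = D.E_0 < 0,
\end{equation*}
and likewise $E_0^2 < 0$ by negative-definiteness.

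With these inequalities in hand, Theorem~\ref{thm:Castelnuovo} yields a $(\beta,g)$-contraction $\pi \colon Y \to \bar Y$ of $E_0$ with $\bar Y$ again $(\beta,g)$-terminal. To conclude that $f$ factors through $\pi$, I would apply the standard rigidity lemma for proper birational morphisms: $\pi$ is a proper birational morphism with $\pi_*\cO_Y = \cO_{\bar Y}$ and connected fibres, and $f$ collapses $E_0$ to a point since $E_0$ is $f$-exceptional, so $f$ descends uniquely to a morphism $\bar Y \to X$ by Zariski's main theorem.

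I do not expect a genuine obstacle here, since the work has been packaged into Theorem~\ref{thm:Castelnuovo}; the only subtle point is the appeal to Mumford's negativity, which requires a little care in the arithmetic setting because intersection numbers depend on the choice of base ring, but negative-definiteness holds uniformly and suffices to produce the desired $E_0$.
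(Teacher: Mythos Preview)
Your argument is correct and essentially identical to the paper's: write $K_{Y,\beta,g} = f^*K_{X,\beta,g} + D$ with $D$ an effective exceptional divisor, use $K_{Y,\beta,g}.D = D^2 < 0$ (via the projection formula and negative-definiteness) to find an exceptional $E_0$ with $K_{Y,\beta,g}.E_0 < 0$, and contract it by Theorem~\ref{thm:Castelnuovo}. You supply a bit more detail than the paper does---in particular the explicit rigidity/Zariski's main theorem step showing $f$ descends to $\bar Y$, which the paper leaves implicit---but the underlying idea is the same.
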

\begin{proof}
Since $X$ is $(\beta,g)$-terminal, we may write $K_{Y,\beta,g} = f^*K_{X,\beta,g} + E$ where $E$ is a positive linear combination of exceptional curves $E_i$. Now $K_{Y,\beta,g} .E = E^2 <0$ so there exists some $E_i$ with $K_{Y,\beta,g}.E_i <0$ and the desired contraction is the one contracting $E_i$. 
\end{proof}

The next goal is to classify Castelnuovo contractions in the case of prime index $p>5$.

\begin{theorem}  \label{thm:classifyCastelnuovo}
Let $(\beta,g)$ be a terminal localised Brauer class of prime index $p>5$ on an arithmetic surface $X$ and $x \in X$ be a closed point. Then, (up to isomorphism of $X$-morphisms), there exists a unique Castelnuovo contraction $f \colon Y \to X$ which is an isomorphism away from $x$.
\end{theorem}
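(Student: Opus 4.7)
The plan is to establish existence by iterated Zariski factorisation of a terminal resolution that modifies $X$ above $x$, and to derive uniqueness from the étale-local classification Theorem~\ref{thm:terminalPrimeIndex} combined with the fan calculus of Section~\ref{sec:fan}.

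For existence, I would first produce any non-trivial proper birational morphism $Z_0 \to X$ which is an isomorphism away from $x$, for instance the blowup of $x$. Applying Theorem~\ref{thm:Resolution} gives a terminal model $Z \to Z_0$ whose composite with $Z_0 \to X$ is a non-trivial proper birational map $Z \to X$ with $(Z,\beta,g)$ terminal and an isomorphism away from $x$. Now repeatedly invoke Theorem~\ref{thm:ZariskiFactor}: each application peels off a Castelnuovo contraction above $x$, strictly decreasing the finite number of exceptional curves there, while keeping the intermediate models terminal. The process terminates at $X$, and the penultimate model $Y$ comes with a Castelnuovo contraction $f \colon Y \to X$ which contracts a single irreducible curve to $x$ and is an isomorphism elsewhere, as required.

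For uniqueness, suppose $f_i \colon Y_i \to X$ ($i=1,2$) are two such contractions with exceptional curves $E_i \subset Y_i$. Choose a common terminal model $Z$ dominating both $Y_1$ and $Y_2$ (obtained via Theorem~\ref{thm:Resolution} applied to any $Z_0$ dominating both). Theorem~\ref{thm:ZariskiFactor} expresses each $Z \to Y_i$ as a composite of Castelnuovo contractions of exceptional curves lying above $x$, so it is enough to show that the proper transforms of $E_1$ and $E_2$ in $Z$ coincide. Étale locally at $x$ we are in one of the four cases of Theorem~\ref{thm:terminalPrimeIndex}. In each case, Propositions~\ref{prop:deltaForHJ} and~\ref{prop:ramForHJ}, together with the HJ-spectrum fan representation, pin down the discrete set of curves over $x$ that can have non-positive b-discrepancy in any terminal model; an explicit check singles out a unique such curve which alone admits a Castelnuovo contraction down to $X$. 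This forces $E_1$ and $E_2$ to agree in $Z$, whence $Y_1 \cong Y_2$ as $X$-schemes.

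The main obstacle lies in the uniqueness argument in the Hirzebruch-Jung case (case~(4) of Theorem~\ref{thm:terminalPrimeIndex}), where as illustrated by Example~\ref{eg:intro} and Theorem~\ref{thm:introClassifyCastelnuovo}(2) the Castelnuovo contraction of $x$ may fail to be the ordinary blowup and can even leave two Hirzebruch-Jung singularities on $Y$. In that setting the correct candidate $E$ must be extracted from the fan representation of the HJ-spectrum as the unique primitive lattice vector on which the discrepancy and ramification functions $\bar\delta, \bar z$ of Section~\ref{sec:terminalprimeindex} are jointly compatible with terminality of both $X$ and $Y$; verifying this rigorously requires careful bookkeeping of which side of the fan ``belongs to'' $X$ versus the exceptional $E$, analogous to the arguments used to rule out types B, C and D in Lemmas~\ref{lem:typeBnosoln}--\ref{lem:typeDnosoln}.
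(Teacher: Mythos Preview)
Your existence argument via terminal resolution plus iterated Zariski factorisation is correct and in fact cleaner than what the paper does; the paper instead builds the candidate $Y$ explicitly in each of the four cases of Theorem~\ref{thm:terminalPrimeIndex} and then checks it is a Castelnuovo contraction.

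The uniqueness argument, however, has a genuine gap. Your proposed criterion---singling out $E$ as the unique curve over $x$ with ``non-positive b-discrepancy''---cannot work as stated: since $X$ itself is $(\beta,g)$-terminal, \emph{every} exceptional curve over $x$ has strictly positive b-discrepancy, so this set is empty. There is no obvious numerical invariant computed from $\bar\delta$ and $\bar z$ alone that picks out the correct $E$ among all primitive lattice points in the fan; in case~(4), for instance, the correct $E$ sits at $(p,-k+1)$, which is unramified and has $\bar\delta$-value strictly larger than $1$, indistinguishable on those grounds from the infinitely many other unramified curves with the same property.

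The paper's route to uniqueness is quite different and does not pass through a common model $Z$. In each case it takes an arbitrary Castelnuovo contraction $f\colon Y\to X$, forms the minimal resolution $g\colon \widetilde Y\to Y$, and factors $fg$ as a sequence of point blowups. Minimality of $g$ forces $\widetilde Y$ to contain exactly one $(-1)$-curve, so each blowup must be centred on the most recently created exceptional. The real constraint is then that the remaining exceptionals of $g$ must assemble into HJ-strings of determinant exactly $p$ (Theorem~\ref{thm:terminalPrimeIndex}(4)). In cases~(1)--(3) this is dispatched quickly using Corollary~\ref{cor:ramexc}; in case~(4) the decisive tool is not the discrepancy/ramification pair $(\bar\delta,\bar z)$ but the determinant formulas of Propositions~\ref{prop:detOfBlowups} and~\ref{prop:detOfAnyCone}, which show that any blowup at a non-nodal point strictly increases the determinant past $p$, and that among nodal blowups only the ray $(p,-k+1)$ splits $\sigma$ into two sub-cones both of determinant $p$. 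Your sketch would need to replace the b-discrepancy criterion with this determinant-$p$ bookkeeping to go through.
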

\begin{definition}
It thus makes sense to define the {\em blowup of $X$ at $x$ with respect to $(\beta,g)$} to be the morphism $f$ in the theorem. 
\end{definition}
\begin{remark}  \label{rem:classifyCastelnuovo}
The proof will involve going through, case by case, the \'etale local possibilities for terminal localised Brauer classes in Theorem~\ref{thm:terminalPrimeIndex} and actually constructing $f$ reasonably explicitly in all four cases. This will thus give a classification of all possible Castlenuovo contractions. These will be given in Lemmas~\ref{lem:CastelnuovoTrivialBeta}, \ref{lem:CastelnuovoBetaRamOnCurve}, \ref{lem:CastelnuovoSecondRam} and \ref{lem:CastelnuovoHJ}. Interestingly, the blowup with respect to some $(\beta,g)$ is not necessarily just the blowup of the underlying surface!
\end{remark}
\begin{proof}
The proof of this theorem will take the rest of the section. We may and will reduce to the \'etale local case where $X$ is the spectrum of Hensel local ring. 
\begin{lemma}  \label{lem:CastelnuovoTrivialBeta}
Suppose $\beta$ is trivial so we are in case~(1) of Theorem~\ref{thm:terminalPrimeIndex}. Then the unique Castelnuovo contraction $f \colon Y \to X$ is the blowup of $X$ at the closed point.
\end{lemma}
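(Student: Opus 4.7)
The plan is to treat existence and uniqueness separately. For existence, I will verify that the ordinary blowup $f_0 \colon Y_0 \to X$ of $X = \Spec R$ at its closed point $x$ is itself a Castelnuovo contraction. The hypotheses of case~(1) of Theorem~\ref{thm:terminalPrimeIndex} guarantee that the possible ramification curve $C$ (i.e.\ the one with $g_C = p$) is smooth at $x$, so its strict transform $\tilde C \subset Y_0$ meets the exceptional curve $E \simeq \bP^1_\kappa$ transversally in a single $\kappa$-point. Using the standard blowup formulas $K_{Y_0} = f_0^* K_X + E$ and $f_0^* C = \tilde C + E$, together with $\Delta_{Y_0,\beta,g} = (1 - \tfrac{1}{p})\tilde C$, a short calculation gives
\[
K_{Y_0,\beta,g} - f_0^* K_{X,\beta,g} \;=\; \tfrac{1}{p}\, E,
\]
so the $b$-discrepancy of $E$ is $\tfrac{1}{p} > 0$. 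Since every other exceptional divisor over $Y_0$ is also exceptional over the already terminal $(X,\beta,g)$, terminality of $(Y_0,\beta,g)$ follows. Adjunction then gives $K_{Y_0,\beta,g}.E = -\tfrac{1}{p} < 0$ (or $-1$ when no ramification curve is present) and $E^2 = -1 < 0$, so $f_0$ is indeed a Castelnuovo contraction.

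For uniqueness, suppose $f \colon Y \to X$ is any Castelnuovo contraction which is an isomorphism away from $x$ and contracts a single irreducible curve, with $(Y,\beta,g)$ terminal. At every closed point $y \in Y$ lying over $x$ one of the four cases of Theorem~\ref{thm:terminalPrimeIndex} must hold. However, case~(4) requires $\beta$ to be non-trivial in the \'etale local Brauer group at $y$, which is impossible because $\beta = 0 \in \textup{Br}\,K(X)$ restricts to zero in every \'etale local Brauer group; and cases (2) and (3) require ramified $\beta$. Thus only case~(1) can occur at $y$, and in particular $Y$ is regular. The classical factorisation theorem for proper birational morphisms between regular excellent two-dimensional schemes (Lipman) then writes $f$ as a finite sequence of blowups at closed points, and a single irreducible exceptional locus forces exactly one such blowup, necessarily centred at $x$. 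Hence $f \simeq f_0$ over $X$.

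The expected main obstacle is really just the exclusion of case~(4) in the uniqueness argument; once regularity of $Y$ is secured, the remaining ingredients (discrepancy bookkeeping for the blowup and Lipman factorisation) are standard.
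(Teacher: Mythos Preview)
Your uniqueness argument is correct and is precisely the paper's one-line proof (``being terminal with respect to $(\beta,g)$ just means the surface is regular'') spelled out in detail: since $\beta=0$, cases~(2)--(4) of Theorem~\ref{thm:terminalPrimeIndex} are excluded, so $Y$ is regular and classical factorisation finishes.

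There is, however, a genuine logical gap in your existence argument. You write that terminality of $(Y_0,\beta,g)$ follows because ``every other exceptional divisor over $Y_0$ is also exceptional over the already terminal $(X,\beta,g)$''. This inference is invalid: the $b$-discrepancy of an exceptional $E'$ over $Y_0$ is \emph{not} the same as its $b$-discrepancy over $X$. From $K_{Y_0,\beta,g}=f_0^*K_{X,\beta,g}+\tfrac{1}{p}E$ one gets
\[
b_{E'}(Y_0)\;=\;b_{E'}(X)\;-\;\tfrac{1}{p}\cdot\mathrm{ord}_{E'}(g^*E),
\]
and the correction term is non-negative, so the inequality goes the wrong way for your purposes. (Concretely, blowing up $E\cap\tilde C$ yields an exceptional with $b$-discrepancy $\tfrac{2}{p}$ over $X$ but only $\tfrac{1}{p}$ over $Y_0$.)

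The fix is immediate and is exactly the paper's observation. With $\beta=0$ every exceptional has ramification index $1$, so $(\beta,g)$-terminality of $Y_0$ is the same as terminality of the log pair $(Y_0,(1-\tfrac{1}{p})\tilde C)$; since $Y_0$ is regular and $\tilde C$ is regular (being the strict transform of a multiplicity-one curve), this holds. Equivalently, each closed point of $Y_0$ is again in case~(1) of Theorem~\ref{thm:terminalPrimeIndex}, hence terminal by the converse direction of that theorem. Either way, the gap is easily closed, but the argument you wrote does not stand as is.
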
 
\begin{proof}
In this case, being terminal with respect to $(\beta,g)$ just means the surface is regular. 
\end{proof}

Suppose now we are in case~(2) of Theorem~\ref{thm:terminalPrimeIndex}, that is, $\beta$ is ramified on a curve $C$ of multiplicity one. Consider repeatedly blowing up the closed point of the strict transform of $C$ and let $g \colon Z \to X$ be the $p$-th blowup so the exceptional curves form an HJ-string with $p$ exceptional curves and weights $1,2, \ldots, 2$. Note that by vanishing of the secondary obstruction, $\beta$ ramifies on all the $(-2)$-curves but not on the $(-1)$-curve. We may thus contract all the $(-2)$-curves, via say $g' \colon Z \to Y$, to a Hirzebruch-Jung singularity which is still terminal with respect to $(\beta,g)$ being case~(4) of Theorem~\ref{thm:terminalPrimeIndex}. We then have a Castelnuovo contraction $f\colon Y \to X$ and the next lemma states that this is the unique such.

\begin{lemma}  \label{lem:CastelnuovoBetaRamOnCurve}
Suppose that we are in case~(2) of Theorem~\ref{thm:terminalPrimeIndex}, that is, $(\beta,g)$ is a terminal localised Brauer class on an arithmetic surface $X = \Spec R$ where $R$ is regular Hensel local and $\beta$ ramifies on a curve $C$ of multiplicity one. Up to isomorphism, there exists a unique Castelnuovo contraction $f \colon Y \to X$. Here $Y$ is regular except for a single Hirzebruch-Jung singularity whose minimal resolution has $p-1$ (-2)-curves. Furthermore, $\beta$ is unramified on the exceptional curve and the Hirzebruch-Jung singularity is case~(4) of Theorem~\ref{thm:terminalPrimeIndex}.
\end{lemma}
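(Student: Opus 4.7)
The plan is to prove existence via the explicit construction sketched just before the lemma, and uniqueness via the fan calculus of Section~\ref{sec:fan} combined with the classification in Theorem~\ref{thm:terminalPrimeIndex}. For existence, I would set up the fan representation $\nu\colon \bZ\mathbf{E} \to \bZ^2$ with $\nu(C) = (0,1)$ and a reference axis $(1,0)$ given by an auxiliary transverse curve germ through $x$. Iterated blowups at the intersection with the strict transform of $C$ produce rays $(1,1), (1,2), \ldots, (1,p)$ with weights $2,\ldots,2,1$, giving the HJ-string $[1,2,\ldots,2]$. Propositions~\ref{prop:deltaForHJ} and~\ref{prop:ramForHJ} yield $\bar{z}(a,b) = bz_0$ and $\bar{\delta}(a,b) = a + b/p$, so $(1,k)$ is $\beta$-ramified exactly when $p \nmid k$. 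A direct check shows every b-discrepancy over $X$ on the HJ-spectrum is positive, so contracting the chain $(1,1),\ldots,(1,p-1)$ of $(-2)$-curves produces an HJ singularity of determinant $p$ fitting case~(4), while the residual $(-1)$-curve at $(1,p)$ descends to the contracted curve $E'$ on $Y$. The Castelnuovo conditions $E'^2 = -1/p < 0$ and $K_{Y,\beta,g}.E' = b_{E'/X}\, E'^2 < 0$ with $b_{E'/X} = \bar{\delta}(1,p) - 1/n_{E'} = 2 - 1 = 1$ then follow from the standard intersection-theoretic formula for pullback through a chain of $(-2)$-curves.

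For uniqueness, let $f'\colon Y' \to X$ be any $(\beta,g)$-Castelnuovo contraction contracting an irreducible curve $E'$. The first step is to show $E'$ is $\beta$-unramified. By Proposition~\ref{prop:contractRamifiedCurve}, if $E'$ were ramified we would land in one of two subcases: its case~(1) demands secondary ramification of $\beta$ at $x$, contradicting our standing hypothesis that $(X,\beta)$ is case~(2) of Theorem~\ref{thm:terminalPrimeIndex} without secondary ramification; its case~(2) forces $Y'$ regular and $E'$ a $(-p)$-curve, but contracting such a curve produces a non-regular HJ singularity, contradicting the regularity of $X$. Hence $E'$ is unramified and $\nu(E') = (a,b)$ with $p \mid b$.

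The second step pins down $\nu(E')$ by a determinant count. Any HJ singularity on $Y'$ is case~(4), hence of determinant $p$. Since $\tilde{C}$ is a $\beta$-ramification curve it cannot pass through an HJ singularity (case~(4) forbids ramification curves on the local $\Spec R$ there), so $\tilde{C}$ meets $E'$ at a smooth point of $Y'$, making the cone $\langle (0,1), \nu(E')\rangle$ of the fan of $Y'$ smooth of determinant $a$; therefore $a = 1$. On the other side, the cone $\langle \nu(E'), (1,0)\rangle$ has determinant $b$: the option $b = 1$ would give $\nu(E') = (1,1)$ and hence $E'$ ramified, contradicting step~1, so this side is an HJ singularity of determinant $b = p$. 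Therefore $\nu(E') = (1,p)$, and the minimal resolution of the right-hand cone is uniquely the chain $(1,1), \ldots, (1,p-1)$ of $p-1$ $(-2)$-curves, matching the explicit construction and yielding an $X$-isomorphism between any two such $Y'$.

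The main obstacle I expect is formalising the fan calculus in this Hensel-local, non-toric setting. One must justify treating the auxiliary ray $(1,0)$ as a legitimate reference direction supplied by a transverse curve germ, and reconcile the fan of the minimal resolution $\tilde{Y}' \to X$ (where all cones are smooth) with the coarser fan of $Y'$ whose non-smooth cones encode the HJ singularities. The framework of Section~\ref{sec:fan} together with Propositions~\ref{prop:deltaForHJ} and~\ref{prop:ramForHJ} was built for precisely this, but care is needed in bookkeeping which primitive lattice points correspond to actual curves on $Y'$ versus purely virtual resolution data, and in verifying that the case~(4) constraints from Theorem~\ref{thm:terminalPrimeIndex} translate cleanly into the determinant conditions.
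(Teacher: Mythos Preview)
Your existence argument matches the construction given just before the lemma and is fine. The uniqueness argument diverges from the paper's, and the obstacle you flag is not merely a formality but a genuine gap.

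The paper's uniqueness proof runs as follows. Take the minimal resolution $g\colon \tilde{Y}\to Y$ and factor $fg\colon \tilde{Y}\to X$ into a sequence of point blowups. Minimality of $g$ together with the fact that $f$ contracts a single curve forces $\tilde{Y}$ to have exactly one $(-1)$-curve, so each blowup must be centred on the $(-1)$-curve created by the previous one. After the first blowup the exceptional $E_1$ carries ramification $\zeta$, hence the node $E_1\cap\tilde{C}$ has two $\beta$-ramified branches without secondary ramification and is not terminal; since that node lies on $E_1$ and can only be blown up while $E_1$ is still the $(-1)$-curve, the next centre is forced to be that node. Iterating, the first $p$ centres are determined. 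After the $p$-th blowup the $(-1)$-curve $E_p$ is $\beta$-unramified, so no further blowup is possible: otherwise $E_p$ would become a $g$-exceptional, but every $g$-exceptional lies over a case~(4) singularity of $Y$ and is therefore $\beta$-ramified.

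Your step~1, deducing that $E'$ is $\beta$-unramified from Proposition~\ref{prop:contractRamifiedCurve}, is correct and is a pleasant shortcut the paper does not take. The problem is step~2. To speak of $\nu(E')$ and of the two cones flanking it you need the exceptional locus of $\tilde{Y}'\to X$ to lie in the HJ-spectrum seeded by $C$ and your auxiliary transverse germ, i.e.\ to form a chain with those two curves at the ends. The single-$(-1)$-curve constraint alone does not give this: once the $(-1)$-curve sits in the interior of a chain, blowing up a non-nodal point on it produces a trivalent vertex, and the fan calculus of Section~\ref{sec:fan} no longer applies. The paper rules out such centres one blowup at a time via the terminal condition; your determinant count presupposes the chain structure rather than deriving it. If you import the paper's single-$(-1)$-curve observation and its inductive constraint on the centres, the chain structure follows, but at that point the blowup sequence is already pinned down and the fan argument becomes redundant.
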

\begin{proof}
Let $f \colon Y \to X$ be a Castelnuovo contraction and $g \colon \Ytilde \to Y$ be a minimal resolution. We may factorise $fg \colon \Ytilde \to X$ into a sequence of blowups, and our approach will be to constrain the possibilities for what this sequence might be. Now minimality of $g$ and the fact that $f$ has precisely one exceptional curve $E$ means that there is exactly one $(-1)$-curve in $\Ytilde$ and its image in $Y$ is $E$. Thus the sequence of blowups must involve repeatedly blowing up on the $(-1)$-curve created in the previous blowup. Suppose the ramification of $\beta$ along the ramification curve $C$ is $\zeta$. Consider the first blowup $h_1\colon X_1 \to X$. Then $\beta$ is also ramified on the exceptional $E_1 \subset X_1$ with ramification given by $\zeta$. This is not terminal by Theorem~\ref{thm:terminalPrimeIndex}, so the the next blowup must be at the closed point of the strict transform of $C$. Continuing this argument shows that the first $p$ blowups must be at the closed point of the strict transform of $C$. There can be no further blowups, for $\beta$ does not ramifiy on the last $(-1)$-curve $E_p$, and so by Theorem~\ref{thm:terminalPrimeIndex}(4), does not correspond to an exceptional of $g$. 
\end{proof}

\begin{lemma}  \label{lem:CastelnuovoSecondRam}
Let $\beta$ be a terminal Brauer class on a regular arithmetic surface $X = \Spec R$ where $R$ is Hensel local such that we are in case~(3) of Theorem~\ref{thm:terminalPrimeIndex}, that is, $\beta$ has non-trivial secondary ramification.Then up to isomorphism, there exists a unique Castelnuovo contraction $f \colon Y \to X$ with respect to $\beta$ which in this case is the blowup at the closed point.
\end{lemma}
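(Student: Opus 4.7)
The plan is to treat existence and uniqueness separately, modelled on Lemma~\ref{lem:CastelnuovoBetaRamOnCurve}. For existence I will take $f\colon Y \to X$ to be the ordinary blowup at the closed point $x$; for uniqueness I will follow any Castelnuovo contraction through its minimal resolution and show that the resulting factorisation must collapse to $f$.

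For existence, since $X$ is regular and $Y$ is the blowup of a regular point, the exceptional $E \simeq \bP^1_\kappa$ satisfies $E^2 = -1$ automatically. The two remaining tasks are to verify $K_{Y,\beta}.E < 0$ and that $(Y,\beta)$ is $\beta$-terminal. The pivotal claim is that $\beta$ must ramify along $E$. Let $\zeta_i = a_{C_i}(\beta) \in H^1_{et}(K(C_i),\bQ/\bZ)$ be the ramifications along the two crossing ramification curves $C_1, C_2$ through $x$. On $Y$ the strict transforms $\tilde C_1, \tilde C_2$ no longer meet, while the non-trivial secondary ramification $r(\zeta_i)_x$ survives at the new points $p_i := \tilde C_i \cap E$. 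Vanishing of the primary obstruction in the Artin-Mumford-Saltman sequence (Theorem~\ref{thm:ArtinMumford}) demands a second ramification curve through $p_i$ whose secondary ramification cancels $r(\zeta_i)_x$, and only $E$ is available; so $E$ is a ramification curve of $\beta$ with index $p$. Using $\tilde C_i.E = 1$ and $E^2 = -1$ one then computes
\[
K_{Y,\beta}.E \;=\; K_Y.E + \Delta_{Y,\beta}.E \;=\; -1 + \left(1-\tfrac{1}{p}\right)\bigl(\tilde C_1.E + \tilde C_2.E + E^2\bigr) \;=\; -\tfrac{1}{p} \;<\; 0.
\]
Going through Theorem~\ref{thm:terminalPrimeIndex} at each closed point of $Y$: the nodes $p_i$ sit in case~(3) (normal crossing with cancelling non-trivial secondary ramification inherited from $x$), while every other closed point of $Y$ meets at most one ramification curve and so lies in case~(1) or~(2). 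Hence $(Y,\beta)$ is $\beta$-terminal and $f$ is a Castelnuovo contraction.

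For uniqueness, let $f'\colon Y' \to X$ be any Castelnuovo contraction, $g\colon \tilde Y' \to Y'$ its minimal resolution, and factor $f'g$ as a sequence of closed point blowups $\tilde Y' = X_n \to X_{n-1} \to \cdots \to X_1 \to X$. Minimality of $g$ and the fact that $f'$ contracts a single curve force every blowup beyond the first to occur at a closed point of the $(-1)$-curve created in the preceding step, by the same argument as in Lemma~\ref{lem:CastelnuovoBetaRamOnCurve}. Since $R$ is Hensel local, $X_1 \to X$ is forced to be the blowup at $x$, agreeing with $Y$. I would then rule out further blowups by a case check on the blowup centre $q \in E_1$. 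For $q \notin \{p_1,p_2\}$ there is no secondary ramification at $q$, so Corollary~\ref{cor:ramexc} gives the new exceptional $F$ the same ramification class as $E_1$; the intersection $\tilde E_1 \cap F$ is then a node of the ramification locus with automatically trivial secondary ramification (the relevant \'etale covers are $\bP^1$'s over finite extensions of $\kappa$), so Theorem~\ref{thm:terminalPrimeIndex}(3) fails and the resulting configuration cannot persist through any sequence of contractions to a $\beta$-terminal $Y'$. For $q \in \{p_1,p_2\}$, the only $(-1)$-curve created is $F$ itself, and contracting it returns us immediately to $X_1 = Y$, so in any case $f' = f$.

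The main technical obstacle is the ramification computation on $E$ in the existence step, as Corollary~\ref{cor:ramexc} explicitly excludes the case of secondary ramification. The correct tool is the Artin-Mumford-Saltman sequence of Theorem~\ref{thm:ArtinMumford} combined with the rational-resolution computations of Proposition~\ref{prop:H2GratRes}: the blowup $Y \to \Spec R$ is a rational resolution whose unique exceptional is $\bP^1_\kappa$, so $H^3_{et}(Y,\bGm)' \simeq H^1(G,\bQ/\bZ)$, and the boundary map from ramification data on $\tilde C_1, \tilde C_2$ is pinned down by Proposition~\ref{prop:H2GratRes}(1). Together with cancellation of the primary obstruction at each $p_i$, these rigidify $\zeta_E$ and confirm that $E$ is ramified precisely because the secondary ramification at $x$ was non-trivial to begin with.
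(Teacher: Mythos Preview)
Your existence argument is correct and in fact more explicit than the paper, which leaves existence implicit. The primary-obstruction argument forcing $E$ to be ramified, the computation $K_{Y,\beta}.E=-\tfrac{1}{p}$, and the terminality check at each closed point of $Y$ are all sound.

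The uniqueness argument, however, has a genuine gap in both branches of your split on $q$. For $q\in\{p_1,p_2\}$, the claim ``contracting it returns us immediately to $X_1=Y$'' misidentifies what gets contracted. In $\tilde Y'=X_2$ the unique $(-1)$-curve is $F$, and $g$ contracts everything \emph{except} the $(-1)$-curve; so $g$ contracts the $(-2)$-curve $\tilde E_1$, producing a $Y'$ with an $A_1$ singularity, not $X_1$. Moreover nothing in your argument stops the chain at $n=2$. For $q\notin\{p_1,p_2\}$ you correctly find a non-terminal node $\tilde E_1\cap F$ on $X_2$, but the sentence ``cannot persist through any sequence of contractions to a $\beta$-terminal $Y'$'' is the assertion to be proved, not an argument: subsequent blowups at that node dissolve it, and you have not traced what happens thereafter.

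The paper bypasses the case split with a single observation you already have in hand from your existence step: the ramification cover $\tilde E_1\to E_1$ is itself \emph{ramified} (at $p_1,p_2$, by the very non-trivial secondary ramification that puts us in case~(3)), and this property survives strict transform. If $n\geq 2$ then the strict transform of $E_1$ in $\tilde Y'$ is contracted by $g$, hence is an exceptional curve of a Hirzebruch-Jung singularity of $Y'$ as in case~(4) of Theorem~\ref{thm:terminalPrimeIndex}. But the exceptionals in case~(4) have ramification classes lying in $H^1(G,\bZ/p)$, i.e.\ their ramification covers are \'etale. This contradiction forces $n=1$ in one stroke, with no need to examine where on $E_1$ the second blowup is centred.
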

\begin{proof}
We follow the approach in Lemma~\ref{lem:CastelnuovoBetaRamOnCurve} and so consider an arbitrary $\beta$-contraction $f \colon Y \to X$ and let $g \colon \Ytilde \to Y$ be the minimal resolution of the underlying surface $Y$.We factorise $fg \colon \Ytilde \to X$ into a sequence of blowups and show that it is in fact single blowup $f_1 \colon X_1 \to X$. If this is not the case, then the exceptional $E$ of $f_0$ must give rise to one of the exceptional curves of $g$ and hence, one of the exceptional curves in Case~(4) of Theorem~\ref{thm:terminalPrimeIndex}. However, the ramification cover of $\Etilde \to E$ giving the ramification of $\beta$ along $E$ is ramified and this never happens in Case~(4) of Theorem~\ref{thm:terminalPrimeIndex}. 
\end{proof}

It remains now only to examine Case~(4) of Theorem~\ref{thm:terminalPrimeIndex} to complete the proof of Theorem~\ref{thm:classifyCastelnuovo}. To this end, we need several preliminary results which will help us control the determinant $\det(m_1,\ldots,m_r)$ of an HJ-string with weights $m_1, \ldots, m_r$. Below we write $\vec{m} = (m_1,\ldots,m_r) \in \bZ_+^r$ and  $\vec{e}_i$ for the $i$-th standard basis vector. We leave the verification of the following linear algebra formulas to the reader. 
\begin{proposition}  \label{prop:detOfBlowups}
\begin{enumerate}
    \item If $\vec{2} = (2,2,\ldots,2) \in \bZ^r$, then $\det(\vec{2}) = r+1$.
    \item $\det(\vec{m}+ \vec{e}_i) = \det(\vec{m}) + \det(m_1,\ldots,m_{i-1}) \det(m_{i+1}, \ldots,m_r)$.
    \item $\det(\vec{m}) = \det(m_1,\ldots,m_{i-2},m_{i-1}+1,\,1,\,m_i+1,m_{i+1},\ldots,m_r)$.
\end{enumerate}
\end{proposition}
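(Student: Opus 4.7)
The plan is to derive all three identities from the Laplace expansion of the tridiagonal continuant along its first (or, by symmetry, last) column. Writing $D(m_1,\ldots,m_r) := \det(m_1,\ldots,m_r)$ for brevity, this yields the standard three-term recursion
$$D(m_1,\ldots,m_r) = m_1 D(m_2,\ldots,m_r) - D(m_3,\ldots,m_r),$$
with the conventions $D() = 1$ and $D(m)=m$.

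Part (1) is then a one-line induction on $r$: the base cases $D()=1$ and $D(2)=2$ match $r+1$, and the recursion gives $D(2,\ldots,2) = 2r - (r-1) = r+1$.

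For part (2), I would use multilinearity of the determinant in row $i$. Incrementing $m_i$ by $1$ is the same as adding the row vector $\vec{e}_i^T$ to row $i$, so the new determinant splits as $\det(\vec{m})$ plus the determinant of the matrix obtained by replacing row $i$ with $\vec{e}_i^T$. Since that row has its only nonzero entry in column $i$, Laplace expansion along row $i$ produces a block-diagonal pair of tridiagonal minors indexed by $1,\ldots,i-1$ and $i+1,\ldots,r$, whose determinants are the two factors in the claimed identity.

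Part (3) is the substantive one, expressing invariance of the continuant under the blowup move. My plan is first to establish the splitting identity
$$D(m_1,\ldots,m_r) = D(m_1,\ldots,m_j) D(m_{j+1},\ldots,m_r) - D(m_1,\ldots,m_{j-1}) D(m_{j+2},\ldots,m_r)$$
by an easy induction on $j$ starting from the basic recursion (which is the case $j=1$), and then apply it to the sequence on the right-hand side of (3) by splitting at the inserted ``$1$''. Two uses of part (2) give the auxiliary simplifications $D(m_1,\ldots,m_{i-2},m_{i-1}+1,1) = D(m_1,\ldots,m_{i-1})$ and $D(m_i+1,m_{i+1},\ldots,m_r) = D(m_i,\ldots,m_r) + D(m_{i+1},\ldots,m_r)$; substituting these into the split expansion makes the cross terms cancel and reproduces exactly the splitting formula for $D(m_1,\ldots,m_r)$ at index $j=i-1$. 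I expect the only real obstacle to be keeping the boundary cases $i=1$ and $i=r$ straight, but these are absorbed uniformly into the formulas thanks to the convention $D()=1$.
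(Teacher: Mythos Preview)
Your proposal is correct and complete. The paper itself provides no argument at all for this proposition, stating only ``We leave the verification of the following linear algebra formulas to the reader.'' Your route via the continuant recursion $D(m_1,\ldots,m_r)=m_1 D(m_2,\ldots,m_r)-D(m_3,\ldots,m_r)$, multilinearity in a row for part~(2), and the Euler-type splitting identity
\[
D(m_1,\ldots,m_r)=D(m_1,\ldots,m_j)\,D(m_{j+1},\ldots,m_r)-D(m_1,\ldots,m_{j-1})\,D(m_{j+2},\ldots,m_r)
\]
for part~(3) is exactly the kind of elementary verification the authors had in mind; your auxiliary computation $D(m_1,\ldots,m_{i-2},m_{i-1}+1,1)=D(m_1,\ldots,m_{i-1})$ and its companion on the right are correct, and the cross terms do cancel as you say. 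The boundary cases $i=1$ and $i=r$ (where the left or right block is empty) are indeed absorbed by the convention $D()=1$, so no separate treatment is needed.
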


Consider a cone $\sigma = \bR_{\geq 0}\, \vec{a} + \bR_{\geq 0}\, \vec{b}$ in $\bR^2$ with boundary rays determined by vectors $\vec{a}, \vec{b} \in \bN^2$ which we may assume to be primitive. Over $\bC$, this determines, via toric geometry, a Hirzebruch-Jung singularity and hence a an HJ-string, say with weights $m_1,\ldots, m_r$. 
\begin{proposition}  \label{prop:detOfAnyCone}
Let $m_1, \ldots, m_r$ be the weights of an HJ-string determined by two non-parallel primitive vectors 
$\left(\begin{smallmatrix} a_1 \\ a_2 
\end{smallmatrix}\right), \,
\left(\begin{smallmatrix} b_1 \\ b_2 
\end{smallmatrix}\right) \in \bN^2$ as above. Then
$$
\det(m_1,\ldots,m_r) = \left|
\det\begin{pmatrix}
a_1 & b_1 \\ a_2 & b_2 
\end{pmatrix}\right|
$$
\end{proposition}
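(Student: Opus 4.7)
The plan is to reduce to the standard form treated in Example~\ref{eg:HJsing} via a $GL_2(\bZ)$-change of basis on the lattice. The key observation is that both sides of the claimed identity are invariant under replacing $(\vec a,\vec b)$ by $(g\vec a,g\vec b)$ for any $g\in GL_2(\bZ)$. The right-hand side is invariant because $\det g=\pm 1$. The left-hand side is invariant because the toric variety $U_\sigma$, its minimal resolution, and the self-intersection numbers of its exceptional curves all depend on $\sigma$ only up to isomorphism of the pair (lattice, cone) --- equivalently, the HJ-string is read off from the $GL_2(\bZ)$-equivariant sequence of primitive lattice vectors on the boundary of the convex hull of $\sigma\cap(\bZ^2\setminus\{0\})$ (cf.\ Stern--Brocot and \cite[\S 2.6]{Ful}).

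Given this invariance, I would normalise as follows. Since $\vec a$ is primitive, it extends to a $\bZ$-basis of $\bZ^2$, so there is $g\in GL_2(\bZ)$ with $g\vec a=\binom{0}{1}$. Write $g\vec b=\binom{m}{-k}$ with $\gcd(m,k)=1$; after possibly replacing $g$ by $\bigl(\begin{smallmatrix}-1&0\\0&1\end{smallmatrix}\bigr)g$ we may arrange $m\geq 1$, and after composing with a shear $\bigl(\begin{smallmatrix}1&0\\c&1\end{smallmatrix}\bigr)$ (which fixes $g\vec a$ and translates the second coordinate of $g\vec b$ by $cm$) we may arrange $0\leq k<m$. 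If $m=1$, then $k=0$ and the cone is the standard smooth cone; the HJ-string is empty, and both sides equal $1$ (using the convention that the empty determinant is $1$). If $m>1$, then $0<k<m$ with $\gcd(m,k)=1$, which is precisely the setup of Example~\ref{eg:HJsing}. That example identifies the weights $m_1,\ldots,m_r$ with the continued-fraction digits of $m/k$ and records $\det(m_1,\ldots,m_r)=m$, while by construction $|\det(\vec a,\vec b)|=|\det(g\vec a,g\vec b)|=m$.

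The only substantive input is the $GL_2(\bZ)$-invariance asserted in the first paragraph; once this is granted, the rest is bookkeeping on cones. One could alternatively proceed by induction on $|\det(\vec a,\vec b)|$: if this determinant is $1$ then $\vec a,\vec b$ form a $\bZ$-basis and the HJ-string is empty, and otherwise there exists a primitive $\vec c$ strictly in the interior of $\sigma$ subdividing it into two cones of smaller determinant, and one would combine the HJ-strings for the two subcones using Proposition~\ref{prop:detOfBlowups}(3). The $GL_2(\bZ)$-reduction is cleaner and avoids managing the recursion explicitly.
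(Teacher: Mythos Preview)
Your proposal is correct and follows exactly the same route as the paper: reduce to the normalised case $\vec a=\binom{0}{1}$, $\vec b=\binom{m}{-k}$ of Example~\ref{eg:HJsing} by a $GL_2(\bZ)$ change of basis, noting that both sides are invariant under such a change. The paper's proof is the one-line version of what you wrote; your extra care with the normalisation steps and the degenerate $m=1$ case is welcome but not strictly needed.
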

\begin{proof}
We have seen this in the case where $\left(\begin{smallmatrix} a_1 \\ a_2 
\end{smallmatrix}\right) = \left(\begin{smallmatrix} 0 \\ 1 
\end{smallmatrix}\right)$ and the general case follows by change of basis. 
\end{proof}

\begin{lemma}  \label{lem:CastelnuovoHJ}
Let $(\beta,g)$ be a terminal localised Brauer class of prime index $p >5$ on an arithmetic surface $X = \Spec R$ where $R$ is a Hensel local ring defining a Hirzebruch-Jung singularity with weights given by the cone $\sigma = \bR_{\geq 0}\, \left(\begin{smallmatrix} 0 \\ 1
\end{smallmatrix}\right) + \bR_{\geq 0}\, \left(\begin{smallmatrix} p \\ -k 
\end{smallmatrix}\right)$ where $0 < k < p$. Then, up to isomorphism, there exists a unique Castelnuovo contraction with respect to $(\beta,g)$ of the form $f \colon Y \to X$. If $k=1$, then $Y$ is regular and $f$ is the contraction of a $(-p)$-curve as in Proposition~\ref{prop:contractRamifiedCurve}(2). If $k>1$, then the ray $\bR_{\geq 0}\, \left(\begin{smallmatrix} p \\ -k+1
\end{smallmatrix}\right)$ divides $\sigma$ into two cones, say $\sigma_1,\sigma_2$. Then $Y$ contains exactly two Hirzebruch-Jung singularities, and their weights are given by the cones $\sigma_1$ and $\sigma_2$. 
\end{lemma}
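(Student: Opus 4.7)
The plan is to reduce all Castelnuovo contractions $f\colon Y\to X$ to fan-theoretic combinatorics in $\sigma$ using the fan calculus of Section~\ref{sec:fan}. Let $\pi\colon Z\to X$ be the minimal resolution, with HJ-string $\cE\colon E_0-\cdots-E_{r+1}$ and fan representation $\nu$ sending $E_0\mapsto(0,1)$ and $E_{r+1}\mapsto(p,-k)$. Case~(4) of Theorem~\ref{thm:terminalPrimeIndex} together with Proposition~\ref{prop:ramForHJ} supplies $\bar{\zeta}\colon\bZ^2\to H^1(G,\bZ/p)$ vanishing on $(0,1)$ and $(p,-k)$, so $\bar{\zeta}(a,c)=a\zeta\bmod p$ for some fixed generator $\zeta\neq 0$. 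Given a Castelnuovo contraction $f\colon Y\to X$ with minimal resolution $g\colon\tilde Y\to Y$, the composite $\tilde f=fg\colon\tilde Y\to X$ factors through $\pi$, and $\tilde Y\to Z$ is a sequence of closed-point blowups, so the exceptional curves of $\tilde f$ lie in the HJ-spectrum generated by $\cE$. The strict transform $\tilde E$ of $E$ is the unique $\tilde f$-exceptional not contracted by $g$, and it splits the remaining exceptionals into two (possibly empty) sub-HJ-strings resolving the singular points of $Y$. Hence $f$ is determined by $\rho:=\nu(\tilde E)$, yielding sub-cones $\sigma_1=\langle(0,1),\rho\rangle$ and $\sigma_2=\langle\rho,(p,-k)\rangle$.

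Next, Theorem~\ref{thm:terminalPrimeIndex} forces each singular point of $Y$ to be either regular or a case~(4) HJ singularity of determinant $p$, so $\det\sigma_i\in\{1,p\}$. Writing $\rho=(a,c)$ with $ak+cp>0$ (the interior condition), Proposition~\ref{prop:detOfAnyCone} yields $\det\sigma_1=a$ and $\det\sigma_2=ak+cp$. A short divisibility argument using $0<k<p$ and primality of $p$ shows the only primitive solutions are $\rho=(1,0)$ (forcing $k=1$) and $\rho=(p,-(k-1))$ (forcing $k>1$). The case~(4) ramification constraints at each $\sigma_i$ are then automatic: all three of $(0,1)$, $\rho$, $(p,-k)$ lie in $\ker\bar{\zeta}$, while every primitive interior vector of $\sigma_i$ has first coordinate strictly between $0$ and $p$ and is therefore ramified. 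The ``at most one non-trivial $g_C$'' condition of case~(4) transfers from $X$ to each singular point of $Y$ because the new exceptional $E$ has $g_E=1$.

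For existence, when $k=1$ the minimal resolution $Z\to X$ is itself a single $(-p)$-curve on which $\beta$ ramifies, and it is Castelnuovo by Proposition~\ref{prop:contractRamifiedCurve}(2). When $k>1$, I would construct $\tilde Y$ by blowing up nodes in the HJ-spectrum of $Z$ until the ray $\rho=(p,-(k-1))$ appears as an exceptional, then Artin-contract the two flanking sub-HJ-strings to produce $Y$ with two HJ singularities of determinant $p$. Terminality of $Y$ has already been verified in the preceding paragraph; then $E^2<0$ is automatic for a contracted curve, and the identity $K_{Y,\beta,g}=f^*K_{X,\beta,g}+a_E\,E$ with $a_E>0$ (by terminality of $X$) yields $K_{Y,\beta,g}.E=a_E\,E^2<0$, so $f$ is Castelnuovo. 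The hardest step should be the fan-theoretic reduction in the first paragraph, specifically verifying that the exceptional locus of $\tilde f$ together with minimality of $g$ really forces two legitimate sub-strings of the HJ-spectrum separated by $\tilde E$, rather than some exotic configuration outside the fan combinatorics.
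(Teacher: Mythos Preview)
Your overall strategy matches the paper's: factor through the minimal resolution, reduce to fan combinatorics, and pin down $\rho=\nu(\tilde E)$ via the determinant constraints $\det\sigma_i\in\{1,p\}$. Your determinant computation and the ensuing case analysis are correct and in fact slightly cleaner than the paper's phrasing. The existence construction and the verification that $f$ is Castelnuovo via $K_{Y,\beta,g}.E=a_E E^2<0$ are also fine.

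The gap you flag in your final sentence is real and is exactly the point where the paper does non-trivial work. The assertion that ``the exceptional curves of $\tilde f$ lie in the HJ-spectrum generated by $\cE$'' does \emph{not} follow merely from $\tilde Y\to Z$ being a sequence of closed-point blowups: the HJ-spectrum only records blowups at \emph{nodes} of the exceptional configuration, and a priori some $f_j$ could be centred at a smooth point of the exceptional locus, producing a tree rather than a string. The paper closes this in two steps. First, minimality of $g$ together with $f$ having a single exceptional forces $\tilde Y$ to contain exactly one $(-1)$-curve, so each $f_j$ must be centred on the $(-1)$-curve created by $f_{j-1}$. Second, if $f_1$ is centred at a non-nodal point of $\bigcup E_i$ (either an interior point of some $E_i$ with $1<i<r$, or at the far end of $E_r$), then Proposition~\ref{prop:detOfBlowups} shows that the flanking HJ-string containing all of $\tilde E_1,\ldots,\tilde E_r$ has determinant strictly larger than $p$, contradicting Theorem~\ref{thm:terminalPrimeIndex}(4) applied to the corresponding singular point of $Y$. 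The same argument propagates to the later $f_j$, forcing every blowup to be nodal and hence landing the configuration in the HJ-spectrum. Once you insert this argument, your proof is complete and coincides with the paper's.
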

\begin{proof}
Let $f \colon Y \to X$ be a  $(\beta,g)$-contraction and $h \colon \Xtilde \to X,\ h_Y \colon \Ytilde \to Y$ be the minimal resolutions of the underlying surfaces. We consider the induced morphism $\tilde{f} \colon \Ytilde \to \Xtilde$ and, as in the proofs of Lemmas~\ref{lem:CastelnuovoBetaRamOnCurve} and  \ref{lem:CastelnuovoSecondRam}, factorise it into a sequence of blowups $\tilde{f} = f_l\circ f_{l-1} \ldots \circ f_1$. Furthermore, as observed in those lemmas, we see that the sequence involves only  blowing up a point on the most recently created $(-1)$-curve to ensure there is only one $(-1)$-curve in $\Ytilde$. We may assume that $\tilde{f}$ is not the identity, for that is the case where $f, h$ are just the contraction of a $(-p)$-curve as in the statement of the Lemma. Let $E_1\cup E_2 \cup \ldots \cup E_r$ be the exceptional locus of $h$. We assume the $E_i$ indexed so $E_1 - \ldots - E_r$ is part of an HJ-string $\cE$ with weights $m_1, \ldots, m_r$ and determinant $\det(\vec{m}) = p$. We let $\Etilde_1,\ldots, \Etilde_r$ denote the strict transforms of $E_1,\ldots,E_r$ in $\Ytilde$, and more generally use the tilde notation to denote the strict transform in $\Ytilde$. Note that the exceptional locus of $h_Y$ will consist of a $(-1)$-curve and a number of HJ-strings corresponding to the Hirzebruch-Jung singularities of $Y$. We will refer to these HJ-strings as the HJ-strings of $\Ytilde$. Our method is to analyse how these can arise from portions of $\Etilde_1,\ldots, \Etilde_r$ and exceptionals of the blowups $f_j$.

Suppose the first blowup $f_1 \colon X_1 \to \Xtilde$ is at some non-nodal point of the exceptional locus of $h$ and thus, lies on a unique exceptional, say $E_i$. It is clear then that $\Etilde_1,\ldots, \Etilde_r$ must lie in a single HJ-string of $\Ytilde$. If furthermore, $1 < i < r$, then it must be a complete HJ-string of $\Ytilde$ with vector of weights $\vec{m} + a\vec{e}_i$ for some positive integer $a$. Proposition~\ref{prop:detOfBlowups}(1) and (2) show that $\det(\vec{m} + a \vec{e}_i) >p$ so $Y$ is not $(\beta,g)$-terminal by Theorem~\ref{thm:terminalPrimeIndex}. We may thus assume that $i=r$. The complete HJ-string of $\Ytilde$ thus has the form $E_0 - \Etilde_1 - \ldots - \Etilde_r - \ldots - \Etilde_{s+1}$. Here, we can assume $\Etilde_{s+1}$ is the final $(-1)$-curve and the HJ-string is obtained from $\cE$, by repeatedly blowing up at the right hand end. Proposition~\ref{prop:detOfBlowups}(1),(2) and (3) now show that the determinant has again strictly increased, so we know this cannot be the case.

We have thus shown that $f_1$ must be the blowup at a nodal point of the exceptional locus of $g$. We can now argue as in the previous paragraph to see that all the blowups $f_j$ must be at nodal points. The exceptionals of $\tilde{f} h$, must give rise to an HJ-string $\cE' = \Etilde_0 - \ldots - \Etilde_{s+1}$ in the HJ-spectrum $\cE_*$ generated by the seed $\cE$. Consider the fan representation $\nu \colon \bZ \cE_* \to \bZ^2$ of Example~\ref{eg:HJsing}, that is, so $\nu(E_0) = (0,1), \nu(E_{r+1}) = (p,-k)$. Note that $k \neq 1$ for in that case, $h$ is the contraction of a single $(-p)$-curve. Let $E$ be the unique exceptional in $\cE'$ with weight 1 and $\nu(E) = (a,-b)$. We thus see that $Y$ has two Hirzebruch-Jung singularities, and their corresponding HJ-strings are $\Etilde_0 - \ldots - E$ and $E - \ldots  - \Etilde_{s+1}$. The determinants of these two HJ-strings must be $p$ by Theorem~\ref{thm:terminalPrimeIndex}(4), so firstly, we see that $a=p$. This also shows that $\beta$ is unramified on $E$. For the other Hirzebruch-Jung singularity to have determinant $p$, we use Proposition~\ref{prop:deltaForHJ} to see
$$
p = 
\begin{vmatrix}
p & p \\ -k & -b 
\end{vmatrix}
 = p(k-b)
$$
so we must have $b=k-1$. 

Conversely, we can construct this Castelnuovo contraction by blowing up $\Xtilde$ until the exceptional corresponding to $(p,-k+1)$ is achieved, and then applying Lipman's version of Artin contraction to contract all other exceptionals.
\end{proof}

Finally, Lemmas~\ref{lem:CastelnuovoTrivialBeta}, \ref{lem:CastelnuovoBetaRamOnCurve}, \ref{lem:CastelnuovoSecondRam} and \ref{lem:CastelnuovoHJ} together finish the proof of Theorem~\ref{thm:classifyCastelnuovo}.
\end{proof}

\section{Terminal Brauer classes on regular surfaces}  \label{sec:smoothX}

In this section, we examine terminal Brauer log pairs of arbitrary index on a regular surface. Our objectives were much more modest here, namely, to bound the singularities of the ramification locus. We succeed in showing the ramification is close to normal crossing, and we obtain  reasonably precise control when it is not normal crossing. 

Let $R$ denote a regular excellent two-dimensional noetherian Hensel local ring with finite residue field $\kappa$ and $(\beta,g_C)$  be a localised Brauer class as in Section~\ref{sec:discrepancy}, that is, $ \beta \in \Br K(R)'$ and $g_C$ are positive integers which are equal to 1 for all but finitely many irreducible curves $C \subset \Spec R$. For each prime divisor $C \in \Spec R$, we have a positive integer $g_C$ which is 1 for all but finitely many $C$. Below, $x$ denotes the closed point of $\Spec R$. 

\begin{lemma}  \label{lem:terminalramonsmooth}
Let $(\beta,g_C)$ be a terminal localised Brauer class on $\Spec R$ and $\Gamma$ the union of the ramification curves. Then $\textup{mult}_x \Gamma \leq 2$.
\end{lemma}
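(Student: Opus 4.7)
The plan is to suppose $\text{mult}_x \Gamma \geq 3$ and derive a contradiction by exhibiting an exceptional curve with non-positive $b$-discrepancy. First I would blow up $x$: let $f\colon Y\to \Spec R$ be the blowup with exceptional $E\simeq \mathbb{P}^1_\kappa$. Since $R$ is regular at $x$, the usual discrepancy computes as
$$
a_E \;=\; 1 - \text{mult}_x \Delta_{X,\beta} \;=\; 1 - \sum_{C} \left(1-\frac{1}{n_C}\right)\text{mult}_x C \;\leq\; 1 - \frac{1}{2}\,\text{mult}_x \Gamma \;\leq\; -\frac{1}{2},
$$
using $n_C\geq 2$ on each ramification curve. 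The $b$-discrepancy is $b_E = a_E + (1 - 1/n_E)$, so $b_E > 0$ forces $\tfrac{1}{n_E} < 1 + a_E \leq \tfrac{1}{2}$ and hence $n_E \geq 3$.

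Next I would bound $n_E$ from above to contradict $n_E \geq 3$. In the absence of secondary ramification at $x$, Corollary~\ref{cor:ramexc} identifies the ramification character of $\beta$ along $E$ with $\sum_i (\text{mult}_x C_i)\,\zeta_i \in H^1(G,\mathbb{Q}/\mathbb{Z})$, where $\zeta_i$ is the ramification of $\beta$ along the branch $C_i$. I would then case-split on the profile of $(\text{mult}_x C_i, n_{C_i})$ compatible with $\text{mult}_x \Gamma \geq 3$. For instance, if every branch has $n_{C_i}=2$, then every $\zeta_i$ is $2$-torsion, so the sum lies in a group of exponent $2$ and $n_E \leq 2$. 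When some $n_{C_i}\geq 3$, the estimate on $\text{mult}_x \Delta_{X,\beta}$ strengthens, forcing a correspondingly larger $n_E$; one then checks that the arithmetic of $\sum_i m_i\zeta_i$ cannot produce an element of sufficiently large order, relative to the profile.

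When $\beta$ does have secondary ramification at $x$, Corollary~\ref{cor:ramexc} is unavailable, so I would first use the primary obstruction (the residues $r(\zeta_i)_x \in \mu^{-1}$ sum to zero) to restrict the possible configurations of branches at $x$, and then blow up further judiciously chosen points on $E$, using Proposition~\ref{prop:H2GratRes} to compute the ramification on the new exceptionals. The aim is to show that after finitely many further blowups one lands on an exceptional $E'$ whose discrepancy and ramification index satisfy $\text{mult}_{x'}\Delta + 1/n_{E'} \geq 2$, violating $b$-terminality.

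The main obstacle I anticipate is this last case: tracking the interplay of secondary ramification with the fan combinatorics through successive blowups, particularly when the branches of $\Gamma$ at $x$ are non-transverse (for example one branch tangent to another, or a branch of multiplicity $\geq 2$ such as a cusp). In that regime the fan representation of the exceptional tower from Section~\ref{sec:fan} and the ramification propagation from Proposition~\ref{prop:H2GratRes} both become delicate, and a careful accounting is needed to show the supply of potential $b$-discrepancy increments on further exceptionals cannot outrun the deficit already present on $E$.
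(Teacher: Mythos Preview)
Your plan follows the same overall strategy as the paper---blow up $x$, compute the $b$-discrepancy of the exceptional, and bound the ramification index along it---but you are missing the organizing observation and you leave the genuinely hard case unresolved.

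The paper's first move is to invoke that terminal implies log terminal and then use the classification of klt pairs with standard coefficients: this forces $\textup{mult}_x\Gamma\le 3$, and in the equality case the triple $(n_1,n_2,n_3)$ of ramification indices (with multiplicity) is Platonic. Your inequality $b_E>0\Rightarrow \sum 1/n_i>1+1/n_E$ recovers this implicitly, but naming it up front turns an open-ended case split into a finite one: $(2,3,3),(2,3,4),(2,3,5)$ are disposed of immediately by noting $e\mid \mathrm{lcm}(n_1,n_2,n_3)$ and plugging into $b_E=\sum 1/n_i-1-1/e$. Your sentence ``one then checks that the arithmetic of $\sum m_i\zeta_i$ cannot produce an element of sufficiently large order'' is too vague to cover even these easy cases.

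The real gap is the one you flag: $(2,2,d)$ with secondary ramification. Here $b_E>0$ forces $e=2d$ and $d$ odd, and primary-obstruction vanishing makes the cover over $C_3$ \'etale. In the no-secondary-ramification sub-case the paper uses that $\zeta_1=\zeta_2$ is the \emph{unique} nonzero $2$-torsion element of $H^1(G,\bQ/\bZ)$ (finite residue field!), so $\zeta_1+\zeta_2+\zeta_3=\zeta_3$ and $e=d$, a contradiction. With secondary ramification present the paper does \emph{not} just ``blow up judiciously'': it splits on whether the strict transform $C'_3$ meets $C'_1$ or $C'_2$ (one further blowup at that point suffices), or meets only $E$. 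In the latter case one blows up $d-1$ more times along the strict transform of $E$ above $y=C'_3\cap E$, and the key device---which your outline does not anticipate---is to pass to $2\beta$, which kills the secondary ramification so that Corollary~\ref{cor:ramexc} applies and pins down the \'etale-local ramification of $\beta$ along $E$ near $y$. An induction then shows the $(d-1)$-st exceptional is unramified with log discrepancy zero.

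Finally, your worry about tangencies and cusps is largely misplaced for this lemma: those configurations are the subject of the subsequent Theorem~\ref{thm:classifysmoothcentre}, whereas here the Platonic reduction and the specific blowup sequences above are enough.
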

\begin{proof}
We know that the associated log surface is log terminal from which it follows that $\textup{mult}_x \Gamma \leq 3$ and furthermore, if equality occurs, then the ramification indices $(n_1,n_2,n_3)$ of $(\beta,g_C)$ written with multiplicity, are a Platonic triple. 

Suppose the latter occurs and let $f \colon X \to \Spec R$ be the blowup at $x$ and $E$ the exceptional curve. The b-discrepancy of $E$ is 
\begin{equation} \label{eq:terminalramonsmooth} 
b\textup{-disc} = \frac{1}{n_1} + \frac{1}{n_2}  + \frac{1}{n_3} -1 - \frac{1}{e} >0.
\end{equation}
where $e$ is the ramification index of $\beta$ along $E$. Now $e$ is a factor of the order $n$, of the Brauer class $\beta$. Now $\textup{Br}\, R = 0$, so the Artin-Mumford-Saltman sequence shows that $n$, and hence $e$ divides the lowest common multiple of $n_1,n_2,n_3$. The formula (\ref{eq:terminalramonsmooth}) for b-discrepancy now eliminates the possibilities $(n_1,n_2,n_3) = (2,3,3), (2,3,4),(2,3,5)$. 

In the case $(2,2,d)$, Inequality~(\ref{eq:terminalramonsmooth}) forces all the $g_C$ to be 1 and $e=2d$ so $d$ must be odd. Let $C_1,C_2 \subset \Spec R$ be the (possibly equal) ramification curves of ramification index 2, and $C_3$ the ramification curve of ramification index $d$. Now vanishing of the primary obstruction, together with the fact that $d$ is odd, shows that the ramification cover $\tilde{C}_3 \to C_3$ is \'etale. Suppose first that the ramification covers of $C_1, C_2$ are also \'etale, so the three ramification data are given by $\zeta_1,\zeta_2,\zeta_3 \in H^1(G,\bQ/\bZ)$ where $G$ is the absolute Galois group of the residue field as usual. Now $\zeta_1,\zeta_2$ are the unique non-zero 2-torsion element in $H^1(G,\bQ/\bZ)$ and Corollary~\ref{cor:ramexc} then shows that the ramification along $E$ is given by $\zeta_3$ so $e=d$ as opposed to $2d$, a contradiction. We may thus suppose that at least one of the ramification covers of $C_1,C_2$ is ramified, so vanishing of the primary obstruction ensures they both are.


Let $C'_i$ be the strict transform of $C_i$. Suppose first that $C'_3$ intersects $C'_2$. We wish to derive a contradiction by blowing up the point of intersection to obtain $f_1 \colon X_1 \to X$ and showing the b-discrepancy of its exceptional curve $E_1$ is non-positive. Indeed, the coefficient of $E_1$ in $K_{X_1} - (ff_1)^*(\sum_i (1-\tfrac{1}{e_i})C_i)$ is $-\tfrac{3}{2} + \tfrac{2}{d}$. Now $\beta$ is $2d$-torsion, so the maximum possible b-discrepancy is
$$ -\frac{3}{2} + \frac{2}{d} + 1 - \frac{1}{2d} = \frac{3}{2d} - \frac{1}{2} \leq 0
$$
since $d\geq 3$. 

We may thus suppose that the only ramification curve that $C'_3$ intersects, is $E$. To eliminate the case $(2,2,d)$, we consider discrepancies of exceptional curves over $y = C'_3 \cap E$. Let $\tilde{E} \to E$ be the cyclic degree $2d$ cover corresponding to the ramification above $E$. We wish to study this locally above $y$, which we now know is \'etale there. 
To do so, consider the Brauer class $2\beta$ which is now unramified above $C_1,C_2$ so the secondary obstruction along $E$ can now be calculated using Corollary~\ref{cor:ramexc} to show the ramification  of $2\beta$ along $E$ is $2\zeta$ where $\zeta\in H^1(G, \bQ/\bZ)$ is the order $d$ element giving the ramification above $C_3$. 
Thus,  \'etale locally above $y$, $\Etilde \to E$ is given by $\zeta + \zeta_2 \in H^1(G, \bQ/\bZ)$, where $\zeta_2$ is one of the two 2-torsion elements. 

We now repeatedly blowup points infinitely near $y$, which are on the strict transform of $E$. More precisely, define $X_0 = X, E_0 = E$ and $f_i \colon X_i \to X_{i-1}$ to be the blowup of the closed point of the strict transform of $E$ which lies above $y$. The ramification along $E_i$ can be computed by induction using Corollary~\ref{cor:ramexc} to give $(i+1)\zeta + i \zeta_2$. In particular, since $d$ is odd, $E_{d-1}$ is not a ramification curve, and the b- and log discrepancies coincide there. Similarly, one computes inductively that the log discrepancy of $E_i$ is $\frac{i+1}{d}-1$ so $E_{d-1}$ has zero b-discrepancy. 

\end{proof}

We say that a curve   $\Gamma \subset \Spec R$ with multiplicity 2, has an $A_3$-singularity if its strict transform, on blowing up the closed point of $\Spec R$, has normal crossings.

\begin{theorem}
\label{thm:classifysmoothcentre}
Let $(\beta,g_C)$ be a terminal localised Brauer class on $\Spec R$ and let $\Gamma$ be the union of its ramification curves. Then either i) $\Gamma$ has normal crossings, or ii) $\Gamma$ is an $A_3$-singularity, there is non-trivial secondary ramification and the (primary)  ramification indices are $n_1 = 2$,  $n_2=2l$ for some odd $l$ and all $g_C = 1$. 
\end{theorem}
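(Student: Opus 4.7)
By Lemma~\ref{lem:terminalramonsmooth} we have $\mathrm{mult}_x\Gamma\le 2$, and the conclusion is automatic when $\mathrm{mult}_x\Gamma\le 1$. The proof therefore reduces to the non-normal-crossing cases with $\mathrm{mult}_x\Gamma=2$, which split into (II) $\Gamma$ analytically irreducible with a multiplicity-two singularity at $x$, and (I) $\Gamma=C_1\cup C_2$, two smooth branches meeting with intersection number $r:=C_1.C_2\ge 2$.

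The overall strategy is to blow up $x$ (and subsequent points, if needed), read off log discrepancies from the usual formulas, compute the ramification on each new exceptional using Corollary~\ref{cor:ramexc} (or the more general Proposition~\ref{prop:H2GratRes} when there is secondary ramification), and use the resulting expressions for b-discrepancies to eliminate configurations. Let $f\colon X_1\to\Spec R$ denote the blowup at $x$ with exceptional $E_0$. A direct computation yields
\[
b_{E_0}^R \;=\; \begin{cases} 2/n_1 - 1/n_{E_0} & \text{in Case (II),}\\ 1/n_1 + 1/n_2 - 1/n_{E_0} & \text{in Case (I).}\end{cases}
\]
For Case~(II), Corollary~\ref{cor:ramexc} (assuming no secondary ramification along $C$) gives $\zeta_{E_0}=2\zeta_1$, whence $n_{E_0}=n_1/\gcd(n_1,2)$; positivity of $b_{E_0}^R$ forces $n_1$ odd with $n_{E_0}=n_1$. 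The strict transform $\tilde C$ is still tangent to $E_0$, so one must blow up $\tilde C\cap E_0$, and on each further exceptional $E_k$ the ramification accumulates as $c_k\zeta_1$ with $c_k\ge 3$ growing (e.g.\ $3\zeta_1, 6\zeta_1$ at the successive triple points of the cusp resolution). Within the bounded embedded resolution of an analytically irreducible multiplicity-two plane curve singularity, some $c_k$ eventually shares a factor with $n_1$, or a point of multiplicity three in the ramification locus appears, and the b-discrepancy of the corresponding exceptional becomes non-positive. The secondary-ramification version of Case~(II) is ruled out by the same strategy using Proposition~\ref{prop:H2GratRes} instead of Corollary~\ref{cor:ramexc}. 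In Case~(I) with $r\ge 3$, the strict transforms remain tangent at $\tilde C_1\cap\tilde C_2\in E_0$ (with intersection $r-1$), and computing $b_{E_1}^R=2/n_1+2/n_2-1/n_{E_1}-1$ for the second blowup forces $r=2$ by the same kind of numerical contradiction.

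For $r=2$ (the $A_3$ case), Corollary~\ref{cor:ramexc} in the secondary-ramification-free setting gives $\zeta_{E_0}=\zeta_1+\zeta_2$, and a short analysis in the cyclic group $H^1(G,\bQ/\bZ)$ shows that no choice with $n_1,n_2\ge 2$ simultaneously keeps $b_{E_0}^R$ and the b-discrepancies of deeper exceptionals positive. Hence secondary ramification must be non-trivial; the primary obstruction (Theorem~\ref{thm:ArtinMumford}) then forces both $C_1$ and $C_2$ to be secondarily ramified at $x$ with values inverse in $\mu^{-1}$. A careful tracking of $\zeta_{E_0}$ and $\zeta_{E_1}$ via the boundary formula of Proposition~\ref{prop:H2GratRes}, together with terminality of all exceptionals in the tower of blowups at $y\in X_1$, pins the primary ramification indices down to $n_1=2$, $n_2=2l$ with $l$ odd, and a b-discrepancy argument exactly as in the last part of the proof of Lemma~\ref{lem:terminalramonsmooth} forces $g_C=1$. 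The principal obstacle is the partial nature of the secondary-obstruction formula in Section~\ref{sec:2ndobstruction}, which is explicit only in the absence of secondary ramification; in its presence one must deduce $\zeta_{E_0}$ indirectly, using the vanishing of the primary obstruction together with a finite case analysis of the log-terminally admissible configurations at $y$.
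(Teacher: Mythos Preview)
Your overall strategy---blow up, compute log discrepancies, use Corollary~\ref{cor:ramexc} and Proposition~\ref{prop:H2GratRes} to track ramification on exceptionals, and rule out configurations by showing some b-discrepancy is $\le 0$---matches the paper's approach. However, your execution has genuine gaps at the points where you write ``eventually'' or ``same kind of numerical contradiction''.

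The paper's organizing device, which you are missing, is the invocation of Koll\'ar's classification \cite[Theorem~3.38]{Kol13}: since the associated log pair $(\Spec R,\Delta)$ is klt, the triple $\{d,n_1,n_2\}$ (where $d$ is the number of blowups at the multiplicity-two point needed to drop the multiplicity of the strict transform of $\Gamma$ below two) must be Platonic. This immediately reduces both your Case~(I) with $r\ge 3$ and your Case~(II) to a \emph{finite} list: for $d\ge 3$ one must have $n_1=2$ and $n_2\in\{2,3,4,5\}$ (or the degenerate $(2,2,d)$ family), and for $d=2$ one has $n_1\in\{2,3\}$ with a short list for $n_2$. The paper then kills each case by an explicit computation of the ramification on a specific exceptional (typically $E_3$ for $d\ge 3$, or $F_2$ for $d=2$). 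Your assertions that ``some $c_k$ eventually shares a factor with $n_1$'' and that $r\ge 3$ yields a ``numerical contradiction'' are not proofs: without the Platonic bound there is no a~priori finiteness, and the ramification pattern on exceptionals is not monotone in any obvious sense.

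The second substantial gap is in the $A_3$ case with secondary ramification. You correctly identify that Proposition~\ref{prop:H2GratRes} does not directly compute $\zeta_{E_0}$ here, and you propose to deduce it ``indirectly''. The paper's actual maneuver is concrete and worth knowing: one constructs, via the Artin--Mumford--Saltman sequence on $\Spec R$, an auxiliary Brauer class $\beta'$ ramified only on $C_2$ with \emph{\'etale} ramification cover matching the \'etale part $\zeta_1$ of the ramification of $\beta$ along $F_1$, and then analyzes $\beta-\beta'$, which has no secondary ramification along $F_1$. This lets one factor the ramification cover $\tilde F_2\to F_2$ through its maximal unramified subcover, compute that $\tilde F_2\to F_2$ is in fact a double cover (using vanishing of the secondary obstruction along $F_1$ to see complete splitting over $F_1\cap F_2$), and conclude that $\zeta_{F_2}$ has order dividing $e_2/2$ unless $e_2=2l$ with $l$ odd. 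Your proposal does not contain this idea, and the ``finite case analysis'' you gesture at would not produce it.

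Finally, a minor organizational remark: your split into (I) reducible versus (II) analytically irreducible is a legitimate alternative to the paper's parameter $d$, but note that your Case~(II) (a single curve of multiplicity two) automatically has $n_1=n_2$, so the Platonic constraint forces $n_1\in\{2,3\}$ immediately; this would streamline your Case~(II) argument considerably once you invoke it.
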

\begin{proof}
We know from Lemma~\ref{lem:terminalramonsmooth}, that $\textup{mult}\, \Gamma \leq 2$, so we may suppose the multiplicity is 2 and that $\Gamma$ is not normal crossing. Let $d$ be the number of times we need to blowup the singularity of (the strict transform of) $\Gamma$ until the multiplicity of $\Gamma$ at any closed point of the blowup is one. 

Let $n_1\leq n_2$ be the ramification indices of the localised Brauer class $(\beta,g_C)$, written with multiplicity and $e_1, e_2$ be the corresponding ramification indices of $\beta$. The associated log surface $(\Spec R, \Delta)$ is log terminal. From \cite[Theorem~3.38]{Kol13}, we know that $\{d,n_1,n_2\}$ must be a Platonic triple.

We first eliminate the possibility that $d\geq 3$, so let us assume this. Since every Platonic triple contains 2, we have $n_1 = 2$. We blow up $\Spec R$ three times at the singularity of $\Gamma$ to obtain $f \colon X \to \Spec R$ with three exceptional curves $E_1, E_2, E_3$ forming an HJ-string $E_1 - E_2 - E_3$ with self-intersections $E_1^2 = E_2^2 = -2, \ E_3^2 = -1$. Since $d \geq 3$, the log discrepancy of $(\Spec R,\Delta)$ along $E_3$ is 
$$ 3 - 3(1 - \frac{1}{2}) - 3(1 - \frac{1}{n_2}) = \frac{3}{n_2} - \frac{3}{2}.$$
Now the maximum possible ramification index of $\beta$ along $E_3$ is $l = \textup{lcm}(2,e_2) \leq 2n_2$ since $\beta$ is $l$-torsion. The co-efficient of $E_3$ in the log surface associated to $(X,\beta)$ is thus $\leq 1 - \frac{1}{l}$. We thus have 
$$ 0 < \frac{3}{n_2} - \frac{3}{2} + 1-\frac{1}{l} \leq \frac{5}{2n_2} - \frac{1}{2}$$
and the second inequality is an equality precisely when $n_2=e_2$ is odd. 
This shows that $n_2 < 5$. If $n_2 = 4$, then $l\leq 4$ too and the b-discrepancy along $E_3$ is now bounded above by $\frac{3}{4} -\frac{1}{2} - \frac{1}{4} = 0$. We next rule out $n_2 = 3$. In this case, $n_1,n_2$ are relatively prime so there is no secondary ramification. Thus the ramification along the ramification curves $C_1$, $C_2$ are given by $z_1,z_2 \in H^1(G, \bQ/\bZ)$ which are respectively 2 and 3-torsion (here $G$ is the absolute Galois group of the residue field as usual). Corollary~\ref{cor:ramexc}, shows that the ramification along $E_3$ is given by $3(z_1 + z_2) = z_1$ which is 2-torsion. Thus the b-discrepancy along $E_3$ is $\frac{3}{3}-\frac{3}{2} + \frac{1}{2}=0$.

We finally rule out the case $d\geq 3, n_2 = 2$. If there is no secondary ramification, then as in the $n_2 = 3$ case, the ramification along the ramification curves $C_1,C_2$ are given by $z_1, z_2 \in H^1(G, \bQ,\bZ)$. However, this time, $z_1,z_2$ are the unique non-trivial 2-torsion element so $\beta$ is unramified along $E_1$ by Corollary~\ref{cor:ramexc}. The b- and log discrepancies thus coincide and are thus non-positive. We may thus suppose there is secondary ramification. Since secondary ramification must cancel in $X$, we see that the ramification covers above $E_1,  E_2$ must be unramified and 2-torsion. Suppose these are given by $\zeta_1,\zeta_2 \in H^1(G, \bQ/\bZ)$. We compute the secondary obstruction of $\beta$ along $E_1$ using Proposition~\ref{prop:H2GratRes}, to be $-2\zeta_1 + \zeta_2 = \zeta_2 = 0$. The b-discrepancy along $E_2$ thus coincides with the log discrepancy which is 0.

We have thus shown that $d=2$. Suppose first that $n_1 =3$. To resolve $(\Spec R,\Gamma)$, we need now only blow up twice to obtain $g \colon Y \to \Spec R$ with two exceptional curves $F_1, F_2$ with $F_1^2 = -2, F_2^2 = -1$. The log discrepancy along $F_2$ is $\frac{2}{n_2}-\frac{4}{3}$. We eliminate the possibilities for $n_2$. If $n_2 = 3$ too, then the ramification index of $\beta$ along $F_2$ is at most 3 whilst the log discrepancy is $-\frac{2}{3}$. The b-discrepancy is thus at most 0. If $n_2 = 5$, then the log discrepancy of $F_2$ is $-\frac{14}{15}$ whilst the maximum ramification index of $\beta$ there is $n_1n_2 = 15$. Thus the b-discrepancy is at most 0. Finally, suppose that $n_2=4$ so the log discrepancy of $F_2$ is $-\frac{5}{6}$. It suffices to show that the ramification index of $\beta$ along $F_2$ is at most 6. Now $n_1,n_2$ are relatively prime so there is no secondary ramification and the ramification along ramification curves is given by $z_1,z_2 \in H^1(G,\bQ/\bZ)$. Here $z_1, z_2$ are 3 and 4-torsion and Corollary~\ref{cor:ramexc} shows that the ramification along $F_2$ is given by $2(z_1 + z_2)$ which is 6-torsion, as desired. 

We are finally reduced to the case where $d=n_1=2$. The log discrepancy along $F_2$ is now $\frac{2}{n_2}-1$. Suppose first that the ramification covers over $C_1, C_2$ are unramified so are given by 2 and $n_2$-torsion $z_1, z_2 \in H^1(G, \bQ/\bZ)$ respectively. The ramification along $F_2$ is given by Corollary~\ref{cor:ramexc} as $2(z_1 + z_2) = 2z_2$. If $n_2$ is even, then this is $n_2/2$-torsion and the b-discrepancy is thus at most 0. If on the other hand $n_2$ is odd, then we blow up the strict transform of $C_2$ repeatedly another $n_2-2$ times. Let $E$ be the resulting $(-1)$-curve which has log discrepancy
$$ \frac{n_2-2}{n_2} + \frac{2}{n_2}-1 = 0.$$
The ramification along $E$ is however $n_2 z_2 = 0$ so again the localised Brauer class is not terminal. 

We thus have $d=n_1=2$ and furthermore, the ramification covers are ramified so the ramification indices $e_1,e_2$ of $\beta$ satisfy $e_1=2 \ | \ e_2$ too. We are done if $C_1 = C_2$, so we suppose this is not the case. Consider the map $g \colon Y \to \Spec R$ above. Since secondary ramification must cancel, the ramification cover of $\beta$ along $F_1$ must be unramified, say given by $\zeta_1 \in H^1(G, \bQ/\bZ)$. We may then construct an unramified cover of $C_2$ corresponding to this $\zeta_1$. By the Artin-Mumford-Saltman sequence on $\Spec R$, this is the ramification data of a unique Brauer class, say $\beta'$ on $\Spec R$. We consider $\beta - \beta'$ which will be unramified on $F_1$. Let $\zeta$ be the ramification of $\beta - \beta'$ along $F_2$, which determines a cyclic ramified cover $\Ftilde_2 \to F_2$. Let $g_*^{-1}C_1, g_*^{-1}C_2 \subset Y$ be the strict transforms of $C_1, C_2$ which intersect $F_2$ at distinct $\kappa$-rational points of $F_2$ since $d=2$. Since secondary ramification of $\beta-\beta'$ has to cancel, $\Ftilde_2 \to F_2$ has to ramify at $g_*^{-1}C_i \cap F_2$ with ramification index two. We factorise $\Ftilde_2 \to F^{et}_2 \to F_2$ where $F^{et}_2$ corresponds to the maximal unramified subextension. To determine $F^{et}_2$, we consider the Artin-Mumford-Saltman sequence on an \'etale neighbourhood of $F_1$. The secondary obstruction along $F_1$ must vanish. But this is given by the \'etale local behaviour of $\zeta$ at $y = F_1 \cap F_2$. We see thus that $\Ftilde_2 \to F_2$ is completely split above $y$ so $F^{et}_2 = F_2$. It follows that $\Ftilde_2 \to F_2$ is a double cover. Let $\zeta'_2$ be the corresponding cohomology class of $H^1_{et}(K(F_2), \bQ/\bZ)$. Returning to $\beta$, we see that its ramification along $F_2$ is given by $\zeta'_2 + 2\zeta_1$ which is $e_2/2$-torsion unless $e_2 = 2l$ for some odd integer $l$. Now the log discrepancy of $F_2$ is $\frac{2}{n_2}-1$ so we must have $n_2 = e_2$ and $\zeta'_2 + 2\zeta_1$ is $e_2/2$-torsion. This completes the proof of the theorem. 
\end{proof}

\begin{remark}
The non-normal crossing case ii) in the above theorem actually does give examples of terminal Brauer log pairs, though it is a little difficult to describe succinctly exactly when this occurs. Suppose we are given tangential curves $C_1, C_2 \subset \Spec R$ of multiplicity 1 at the closed point $x$. Let $z_i \in H^1_{et}(K(C_i),\bQ/\bZ)$ be 2-torsion elements corresponding to ramified covers of $C_1, C_2$. By the Artin-Mumford-Saltman sequence, there exists a Brauer class $\beta \in \textup{Br}\, K(R)$ with this ramification data. Similarly, there is a 2-torsion Brauer class $\beta'$ which is only ramified on $C_1$, and the ramification cover is unramified. Let $g \colon Y \to \Spec R$ be the resolution in the proof of the theorem and $F_1, F_2$ be the exceptional curves. Now both $\beta$ and $\beta+\beta'$ have ramification as described in case ii) of the theorem, but one is terminal, whilst the other is not. Indeed, we know from Corollary~\ref{cor:ramexc}, that $\beta'$ is ramified on $F_1$ with the ramification cover there unramified of order 2. It follows that, of $\beta$ and $\beta + \beta'$, exactly one is unramified along $F_1$ and so is not terminal. Suppose $\beta$ is the one which is ramified along $F_1$. It is also ramified along $F_2$. Now $g$ is a good log resolution of $(\Spec R, \frac{1}{2}(C_1+C_2))$ and the log discrepancies of both $F_1$ and $F_2$ are both 0. Hence $\beta$ is terminal. 
\end{remark}

\bibliographystyle{amsalpha}
\bibliography{references}

\end{document}